\theoremstyle{plain} 
\newtheorem{theorem}{Theorem}
\newtheorem{lemma}[theorem]{Lemma}
\newtheorem{prop}[theorem]{Proposition}
\newtheorem{coro}[theorem]{Corollary}
\newtheorem{oldtheorem}{Theorem}
\theoremstyle{definition} 
\newtheorem{defn}[theorem]{Definition}
\newtheorem{example}[theorem]{Example}
\theoremstyle{remark} 
\newtheorem{remark}[theorem]{Remark}
\numberwithin{equation}{section}
\numberwithin{theorem}{section}
\renewcommand{\d}{\mathrm d}
\renewcommand{\div}{\mathrm{div}}
\newcommand{\lap}{\Delta}
\newcommand{\Grad}{\nabla}
\newcommand{\bdy}{\partial}
\DeclareMathOperator{\supp}{supp}
\DeclareMathOperator{\diag}{diag}
\DeclareMathOperator{\sign}{sign}
\newcommand{\bb}{\mathbb}
\newcommand{\mc}{\mathcal}
\newcommand{\abs}[1]{\left|#1\right|}
\newcommand{\weakconv}{\rightharpoonup}
\newcommand{\lb}{\langle}
\newcommand{\rb}{\rangle}
\renewcommand{\epsilon}{\varepsilon}
\renewcommand{\d}{\mathrm d} 
\title[Anisotropic quasilinear systems]{Anisotropic Quasilinear Elliptic Systems with Homogeneous Critical Nonlinearities} 
\author{Mathew Gluck}
\address{Southern Illinois University \\
	School of Mathematical and Statistical Sciences}
\email{mathew.gluck@siu.edu}
\date{\today}
\begin{document} 
\flushbottom
\maketitle
\thispagestyle{empty}
\begin{abstract}
In this work we consider a system of quasilinear elliptic equations driven by an anisotropic $p$-Laplacian. The lower-order nonlinearities are in potential form and exhibit critical Sobolev growth. We exhibit conditions on the coefficients of the differential operator, the domain of the unknown function, and the lower-order nonlinearities under which nontrivial solutions are guaranteed to exist and conditions on these objects under which a nontrivial solution does not exist. 
\end{abstract}
\section{Introduction}
In 1983 Brezis and Nirenberg \cite{BrezisNirenberg1983} determined conditions on $\lambda\in \bb R$ and the bounded domain $\Omega \subset \bb R^n$ ($n\geq 3$) for which the problem 
\begin{equation}
\label{eq:BN_problem}
\begin{cases}
	-\lap u = \lambda u + |u|^{\frac{4}{n -2}}u & \text{ in }\Omega\\
	u= 0 & \text{ on }\bdy \Omega	
\end{cases}
\end{equation}
admits a positive solution and conditions on these objects under which problem \eqref{eq:BN_problem} does not admit a positive solution. They established the following theorem. In the statement of the theorem, $\lambda_1 = \lambda_1(-\lap)$ is the first eigenvalue of the Dirichlet Laplacian. 
\begin{oldtheorem}
\label{theorem:BN}
Let $n\geq 3$ and let $\Omega \subset \bb R^n$ be a bounded open set. 
\begin{enumerate}[label = \bf{(\alph*)}]
	\item If $n = 3$ then there are constants $\lambda_*(\Omega)\leq \lambda^*(\Omega)$ satisfying $0< \lambda_*\leq \lambda^*<\lambda_1$ such that problem \eqref{eq:BN_problem} admits a positive solution if $\lambda\in (\lambda^*, \lambda_1)$ and problem \eqref{eq:BN_problem} does not admit a positive solution if $\lambda\in (0, \lambda_*]$.
	\item If $n\geq 4$ then problem \eqref{eq:BN_problem} admits a positive solution if and only if $\lambda\in (0, \lambda_1)$. 
\end{enumerate}
\end{oldtheorem}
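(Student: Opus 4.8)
The plan is to treat problem \eqref{eq:BN_problem} variationally and to reduce both the existence and the nonexistence assertions to a comparison between a constrained energy infimum and the best constant in the Sobolev inequality. Write $2^\ast = \tfrac{2n}{n-2}$, let
\[
	S = \inf\Big\{\norm{\Grad u}_{L^2(\bb R^n)}^2 : u\in \mc D^{1,2}(\bb R^n),\ \norm{u}_{L^{2^\ast}(\bb R^n)} = 1\Big\}
\]
be the best Sobolev constant (not attained on bounded domains), and for $\lambda < \lambda_1$ set
\[
	S_\lambda(\Omega) = \inf\Big\{\int_\Omega\big(\abs{\Grad u}^2 - \lambda u^2\big)\,\d x : u\in H^1_0(\Omega),\ \int_\Omega \abs{u}^{2^\ast}\,\d x = 1\Big\}.
\]
Because $\lambda < \lambda_1$ the quadratic form $u\mapsto \int_\Omega(\abs{\Grad u}^2 - \lambda u^2)$ is coercive on $H^1_0(\Omega)$, so $S_\lambda(\Omega) > 0$; testing with rescaled, truncated Aubin--Talenti extremals gives $S_\lambda(\Omega)\le S$. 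The first main step is a \emph{compactness lemma}: if $0 < S_\lambda(\Omega) < S$ then $S_\lambda(\Omega)$ is attained. I would prove this with the Brezis--Lieb lemma: for a minimizing sequence with $u_k\weakconv u$ in $H^1_0(\Omega)$, splitting the $L^{2^\ast}$ norm and the $L^2$ norm of the gradient and using Rellich together with the strict inequality $S_\lambda(\Omega) < S$ forces $u_k\to u$ strongly, so $\abs{u}$ is a minimizer; a Lagrange multiplier, elliptic regularity and the strong maximum principle then turn a rescaling of $\abs{u}$ into a positive classical solution of \eqref{eq:BN_problem}. Hence existence of a positive solution reduces to verifying the strict inequality $S_\lambda(\Omega) < S$.

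The crux is this strict inequality, which I would establish by an explicit test-function expansion. Fix $x_0\in\Omega$, a cutoff $\varphi$ equal to $1$ near $x_0$, and evaluate the Rayleigh quotient on $u_\epsilon(x) = \varphi(x)\,(\epsilon^2 + \abs{x - x_0}^2)^{-(n-2)/2}$; as $\epsilon\to 0^+$ one obtains an expansion of the form
\[
	\frac{\int_\Omega\big(\abs{\Grad u_\epsilon}^2 - \lambda u_\epsilon^2\big)\,\d x}{\big(\int_\Omega \abs{u_\epsilon}^{2^\ast}\,\d x\big)^{2/2^\ast}} = S + C_1\,\epsilon^{n-2} - \lambda\, C_2\,\omega_n(\epsilon) + (\text{h.o.t.}),
\]
with constants $C_1, C_2 > 0$ and $\omega_n(\epsilon) = \epsilon^2$ when $n\ge 5$, $\omega_4(\epsilon) = \epsilon^2\abs{\log\epsilon}$, $\omega_3(\epsilon) = \epsilon$. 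For $n\ge 4$ the negative term $-\lambda C_2\omega_n(\epsilon)$ dominates the truncation error $C_1\epsilon^{n-2}$ for every $\lambda > 0$, so the quotient is $< S$ for $\epsilon$ small; combined with the compactness lemma this yields the existence assertion of part (b), i.e.\ a positive solution for every $\lambda\in(0,\lambda_1)$. For $n = 3$ one has $\omega_3(\epsilon) = \epsilon$, of the same order as $C_1\epsilon$, so the quotient drops below $S$ only once $\lambda$ exceeds a threshold; defining $\lambda^\ast(\Omega)$ as the infimum of $\{\lambda\in(0,\lambda_1) : S_\lambda(\Omega) < S\}$, the compactness lemma gives a positive solution for every $\lambda\in(\lambda^\ast,\lambda_1)$.

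For the nonexistence claims: if $\lambda\ge\lambda_1$ and $u > 0$ solved \eqref{eq:BN_problem}, pairing the equation with the positive first Dirichlet eigenfunction $\varphi_1$ gives $\lambda_1\int_\Omega u\varphi_1\,\d x = \int_\Omega(\lambda u + u^{2^\ast-1})\varphi_1\,\d x > \lambda_1\int_\Omega u\varphi_1\,\d x$, a contradiction; this excludes $\lambda\ge\lambda_1$ in all dimensions and in particular shows $\lambda^\ast(\Omega) < \lambda_1$. If $\lambda\le 0$, after translating $\Omega$ to be star-shaped about the origin, multiply the equation by $x\cdot\Grad u$ and integrate by parts; the critical exponent makes the $u^{2^\ast}$-terms cancel and one is left with the Pohozaev identity
\[
	\lambda\int_\Omega u^2\,\d x = \frac12\int_{\bdy\Omega}(x\cdot\nu)\,\abs{\bdy_\nu u}^2\,\d\sigma,
\]
whose right-hand side is strictly positive by Hopf's lemma; hence no positive solution exists for $\lambda\le 0$. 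Finally, when $n = 3$, to see that $\lambda_\ast(\Omega) := \sup\{\lambda > 0 : \eqref{eq:BN_problem}\text{ has no positive solution}\}$ is positive, one refines this: any positive solution satisfies $\int_\Omega(\abs{\Grad u}^2 - \lambda u^2)\,\d x = \int_\Omega u^{2^\ast}\,\d x \ge S\big(\int_\Omega u^{2^\ast}\,\d x\big)^{2/2^\ast}$, which bounds $\norm{u}_{L^{2^\ast}}$ from below, while a further Pohozaev-type estimate bounds $\norm{u}_{L^{2^\ast}}$ from above by a quantity tending to $0$ as $\lambda\to 0^+$, a contradiction for small $\lambda$; for $\Omega$ a ball this is the explicit computation producing $\lambda_\ast = \lambda_1/4$, and $\lambda_\ast\le\lambda^\ast$ holds by construction.

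The principal obstacle is the test-function expansion in the second paragraph: it must be carried out with enough precision to resolve the competition between the $O(\epsilon^{n-2})$ truncation error and the $\lambda$-term --- including the logarithmic correction that appears exactly in dimension $4$ and the fact that in dimension $3$ the two contributions have the same order, which is precisely the source of the qualitatively different low-dimensional conclusion in part (a). The compactness lemma is the second most delicate ingredient, though the Brezis--Lieb splitting renders it essentially routine; the regularity and positivity bootstrap, together with the eigenfunction and Pohozaev arguments, are classical.
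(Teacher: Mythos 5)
Theorem \ref{theorem:BN} is not proved in this paper; it is quoted as background and attributed to \cite{BrezisNirenberg1983}, so your attempt has to be measured against the original Brezis--Nirenberg argument, and your existence half does follow it correctly: coercivity of the quadratic form for $\lambda<\lambda_1$, the compactness lemma ``$0<S_\lambda(\Omega)<S$ implies $S_\lambda(\Omega)$ is attained'' via Brezis--Lieb, and the bubble expansion with the right orders ($\epsilon^{2}$ for $n\ge 5$, $\epsilon^{2}\abs{\log\epsilon}$ for $n=4$, $\epsilon$ for $n=3$, against the truncation error $\epsilon^{n-2}$). Two small repairs there: the inequality $\lambda^\ast(\Omega)<\lambda_1$ does not follow from the $\varphi_1$-pairing nonexistence argument; it follows from testing $S_\lambda$ with the first eigenfunction, which gives $S_\lambda\le(\lambda_1-\lambda)\norm{\varphi_1}_{L^2}^2/\norm{\varphi_1}_{L^{2^\ast}}^2\to 0$ as $\lambda\uparrow\lambda_1$, combined with the monotonicity of $\lambda\mapsto S_\lambda$. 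Also, as written your $\lambda_\ast$ is the supremum of a set that contains $[\lambda_1,\infty)$, and in any case nonexistence on all of $(0,\lambda_\ast]$ does not follow from defining $\lambda_\ast$ as a supremum of a nonexistence set; you need $\lambda_\ast=\sup\{\mu>0:\text{no positive solution for any }\lambda\in(0,\mu]\}$ or an argument that the nonexistence set is an interval.

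The genuine gaps are in the nonexistence half. First, ``after translating $\Omega$ to be star-shaped about the origin'' is not a legitimate step: star-shapedness is a geometric hypothesis that no translation can create, and without it the Pohozaev boundary term $\int_{\bdy\Omega}(x\cdot\nu)\abs{\bdy_\nu u}^2\,\d\sigma$ has no sign; indeed on an annulus the problem with $\lambda\le 0$ does admit positive (radial) solutions, so the exclusion of $\lambda\le 0$ in part (b) and the conclusion $\lambda_\ast>0$ in part (a) genuinely require a hypothesis of this type (the original theorem is stated that way; the version quoted here is a loose paraphrase). Second, and more seriously, your argument that $\lambda_\ast>0$ when $n=3$ is not a proof. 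Any scheme of the form ``Sobolev lower bound on $\norm{u}_{L^{2^\ast}}$ versus a Pohozaev-type upper bound tending to $0$ as $\lambda\to 0^+$'' must use $n=3$ in an essential way, because for $n\ge 4$ positive solutions exist for every small $\lambda>0$ and concentrate: their subcritical norms tend to $0$ while $\norm{u}_{L^{2^\ast}}$ stays bounded below, so such bounds produce no contradiction. On a strictly star-shaped domain the Pohozaev identity controls $\lambda\int_\Omega u^2$ by the boundary flux, which (via the equation and Cauchy--Schwarz) yields smallness of $\norm{u}_{L^{2^\ast-1}}$, not of $\norm{u}_{L^{2^\ast}}$, and this is compatible with concentration. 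The actual dimension-three nonexistence in Brezis--Nirenberg is a separate computation (on the ball, a refined multiplier/ODE argument giving $\lambda_\ast=\lambda_1/4$), and establishing $\lambda_\ast>0$ beyond that setting requires considerably more than what you wrote. As it stands, the nonexistence claim of part (a) and the $\lambda\le 0$ direction of part (b) are not established by your proposal.
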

The subtleties under which problem \eqref{eq:BN_problem} is solvable, at least in the case that one in interested in positive solutions, are apparent in the statement of Theorem \ref{theorem:BN}. These subtleties make problem \eqref{eq:BN_problem} a natural candidate for further investigations and generalizations and there is a substantial body of mathematical research devoted to such pursuits. With regard to exploring extensions of problem \eqref{eq:BN_problem}, one theme present in the literature is concerned with replacing $-\lap$ in problem \eqref{eq:BN_problem} with a more general operator. Works in this family concern the extension to the $p$-Laplacian \cite{GarciaPeral1987, GueddaVeron1989}, the extension to an operator of Hardy type \cite{Jannelli1999,GazzolaGrunau2001,FerreroGazzola2001}, the extension to the fractional Laplacian \cite{ServadeiValdinoci2015} and the extension to more general divergence-form operators \cite{Egnell1988,Hadiji2006,Hadiji2007, deMouraMontenegro2014, MontenegrodeMoura2015, HaddadMontenegro2016}.  A second theme is the extension of problem \eqref{eq:BN_problem} to the setting of vector-valued unknown functions \cite{AlvesEtAl2000, Amster2002, BartschGuo2006, BrownEtAl2023}. In the vector-valued setting, a number of works have investigated the case where the analog of the right-hand side of the differential equation in problem \eqref{eq:BN_problem} is replaced by a sum of homogeneous functions \cite{Morais1999,BarbosaMontenegro2011,LuShen2020}. Such investigations are more interesting in the vector-valued setting than in the scalar-valued setting due to the fact that in the vector-valued setting there are many more homogeneous functions than there are in the scalar-valued setting.\\ 

In this work we consider a vector-valued analog of problem \eqref{eq:BN_problem} where the Laplacian is replaced by an anisotropic $p$-Laplacian and the right-hand side of the differential equation in \eqref{eq:BN_problem} is replaced by the sum of two (vector-valued) homogeneous functions, one of homogeneous degree $p - 1$ and the other of homogeneous degree $p^*- 1$, where $p^* = np/(n -p)$ is the critical exponent for the Sobolev embedding. To describe the problem of interest more specifically, we introduce the following notions of homogenity and anisotropic $p$-Laplacian. 
\begin{defn}
\label{defn:homogeneous_function}
A function $H:\bb R^d\to \bb R$ is homogeneous of degree $q$ if $H(\varrho s) = \varrho^qH(s)$ for all $(\varrho, s)\in (0, \infty)\times \bb R^d$.  We say $H$ is positively homogeneous of degree $q$ if $H$ is homogeneous of degree $q$ and $H(s)> 0$ for all $s\in \bb R^d\setminus\{0\}$. 
\end{defn}
Examples of homogeneous functions of degree $q$ on $\bb R^d$ include $H(s) = \sum_{j = 1}^dc_j|s_j|^q$ for $c_j\in \bb R$, $H(s) = |\lb Ms, s\rb|^{(q- 2)/2}\lb Ms, s\rb$ where $M$ is a $d\times d$ matrix and $\lb \cdot, \cdot \rb$ is the Euclidean inner product, $H(s) = \prod_{j = 1}^d|s_j|^{\alpha_j}$ where $\sum_{j = 1}^d \alpha_j = q$ and $H(s) = \pi_\ell(|s_1|, \ldots, |s_d|)^{q/\ell}$, where $\pi_\ell$ is the $\ell^{\text{th}}$ elementary symmetric polynomial in $d$ variables. \\

\noindent We define the anisotropic $p$-Laplacian $L_{A, p}$ by 
\begin{equation}
\label{eq:Lp_Laplacian}
\begin{split}
	L_{A, p}u = \div\left(\lb A(x)\Grad u, \Grad u\rb^{\frac{p - 2}{2}} A(x) \Grad u\right),
\end{split}
\end{equation}
where $p>1$ and $A$ is an $n\times n$ matrix-valued function. In this work we are concerned with establishing conditions under which either the existence or the non-existence of nontrivial solutions to 
\begin{equation}
\label{eq:main_problem}
\begin{cases}
	-L_{A, p}\bm u = f(\bm u) + g(\bm u) & \text{ in }\Omega\\
	\bm u = 0 & \text{ on }\bdy \Omega
\end{cases}
\end{equation}
is guaranteed, where $\bm u = (u_1, \ldots, u_d):\Omega \to \bb R^d$ is a vector-valued function and $L_{A,p}$ acts on $\bm u$ in the coordinatewise sense: 
\begin{equation*}
	L_{A, p}\bm u 
	= (L_{A, p}u_1, \ldots, L_{A,p }u_d). 
\end{equation*}
Problem \eqref{eq:main_problem} is assumed to be in potential form in the sense that the nonlinearities $f$ and $g$ satisfy
\begin{equation*}
	f = \frac1p\Grad F
	\qquad \text{ and }\qquad 
	g=\frac 1{p^*} \Grad G
\end{equation*} 
for some $p$-homogeneous function $F\in C^1(\bb R^d)$ and some $p^*$-homogeneous function $G\in C^1(\bb R^d)$. We do not consider positive solutions of problem \eqref{eq:main_problem} (solutions whose coordinate functions are non-negative, non-zero functions). Rather, we aim to establish results with general homogeneous nonlinearities. Under such generality, it is unclear whether the existence of a nontrivial solution implies the existence of a positive solution. See \cite{Morais1999, LuShen2020, BrownEtAl2023} for some restrictions on $F$ and $G$ under which such an implication holds. The results in the present work extend some results corresponding to some special cases of problem \eqref{eq:main_problem}. For example, \cite{Morais1999,BarbosaMontenegro2011} consider problem \eqref{eq:main_problem} with $A \equiv I$, \cite{Egnell1988, deMouraMontenegro2014, HaddadMontenegro2016} consider problem \eqref{eq:main_problem} in the scalar-valued setting with $A(x) = a(x)^{2/p}I$, and \cite{BrownEtAl2023} considers problem \eqref{eq:main_problem} with $p = 2$, for general $A$, and for particular choices of $F$ and $G$.\\

In order that $L_{A,p}$ retain the essential properties of the usual $p$-Laplacian we impose the following assumptions on $A$:
\begin{enumerate}[label = {\bf A\arabic*.}, ref = {\bf A\arabic*}]
	\item $A:\overline\Omega\to M(n; \bb R)$ is continuous \label{item:A_continuous}
	\item $A(x) = A^\top(x)$ for all $x\in \overline\Omega$ \label{item:A_symmetric}
	\item $A$ is uniformly positive definite in the sense that there is $\tau>0$ such that $
\tau|\xi|^2 \leq \lb A(x)\xi, \xi\rb$ for all $(x, \xi)\in \overline\Omega\times \bb R^n$, \label{item:A_positive_definite}
\end{enumerate}
where, in item \ref{item:A_continuous} and throughout the manuscript, $M(n; \bb R)$ denotes the space of $n\times n$ matrices with real entries. When $\Omega$ is bounded, assumptions \ref{item:A_continuous}, \ref{item:A_symmetric} and \ref{item:A_positive_definite} ensure the existence of a constant $\Lambda> 0$ for which 
\begin{equation}
\label{eq:A_upper_bounded}
	\lb A(x)\xi, \xi\rb \leq \Lambda |\xi|^2
	\qquad \text{ for all }(x, \xi)\in \overline\Omega\times \bb R^n. 
\end{equation}
The following smoothness and homogeneity assumptions will be imposed on the functions $F$ and $G$: 
\begin{enumerate}[label = {\bf H\arabic*.}, ref = {\bf H\arabic*}]
	\item $F\in C^1(\bb R^d)$ is homogeneous of degree $p$ \label{item:F_homogeneous}
	\item $G\in C^1(\bb R^d)$ is positively homogeneous of degree $p^*$. \label{item:G_positively_homogeneous}
\end{enumerate}
 In the spirit of the classical Brezis-Nirenberg problem \cite{BrezisNirenberg1983} we will establish the existence of solutions to problem \eqref{eq:main_problem} by showing that a suitable energy functional subject to a suitable constraint has sufficiently small infimum. In this context, it is natural to impose both a positivity assumption on $G$ and assumptions on $A$ and $F$ under which the energy functional is coercive. We collect and label these assumptions in \eqref{assumptions:AFG} below as they will be used in all of our existence theorems. To state the assumptions we define
\begin{equation}
\label{eq:minF_psphere}
	M_F = \max_{s\in \bb S_p^{d - 1}}F(s)
	\qquad 
	\text{ and }
	\qquad
	\mu_F= 	\min_{s\in \bb S_p^{d - 1}}F(s), 
\end{equation}
where 
\begin{equation*}
	\bb S_p^{d-1} 
	= \{s \in \bb R^d: |s_1|^p + \ldots + |s_d|^p = 1\}
\end{equation*}
and we let $\lambda_1(A, p)$ be the first eigenvalue of 
\begin{equation*}
	\begin{cases}
	-L_{A ,p} u = \lambda|u|^{p - 2}u & \text{ in }\Omega\\
	u = 0 & \text{ on }\bdy \Omega. 
	\end{cases}
\end{equation*}
The assumptions to be used in our existence theorems are 
\begin{equation}
\label{assumptions:AFG}\tag{AFG}
\begin{cases}
	A:\overline\Omega\to M(n; \bb R) \text{ satisfies \ref{item:A_continuous}, \ref{item:A_symmetric} and \ref{item:A_positive_definite}}, \\
	F \text{ satisfies both \ref{item:F_homogeneous} and }M_F\in (0, \lambda_1(A, p)), \text{ and }\\
	G \text{ satisfies \ref{item:G_positively_homogeneous}}. 
\end{cases}
\end{equation}

With these conventions in place we can state the main results of the present work. Generally, both the location of a global minimizer $x_0$ of $\det A$ and the behavior of $A$ near $x_0$ play roles in our sufficient conditions for existence of nontrivial solutions to problem \eqref{eq:main_problem}. The first of our existence theorems concerns the case where $A$ is not too flat near a global minimizer of $\det A$ and there is a positive lower bound on $\mu_F$. 
\begin{theorem}
\label{theorem:existence_gamma_small1}
Let $p\in (1, n)$, let $\Omega\subset \bb R^n$ be a bounded open set and suppose $A$, $F$ and $G$ satisfy assumptions \eqref{assumptions:AFG}. If there exists $x_0\in \Omega$ that minimizes $\det A$ and if there are constants $C_0>0$ and $\gamma\in (0, p]$ such that
\begin{equation}
\label{eq:A_lower_expansion}
	\lb A(x)\xi, \xi\rb^{p/2}
	\geq \lb A(x_0)\xi, \xi\rb^{p/2} + C_0|x - x_0|^\gamma |\xi|^p
	\qquad\text{ for all }(x, \xi)\in \Omega\times \bb R^n, 
\end{equation}
then there is $\lambda^*\in (0, \lambda_1(A,p))$ such that if $\mu_F> \lambda^*$ then problem \eqref{eq:main_problem} has a nontrivial weak solution. 
\end{theorem}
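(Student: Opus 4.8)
The plan is to obtain nontrivial solutions of \eqref{eq:main_problem} as rescaled minimizers of a constrained variational problem, following the Brezis--Nirenberg scheme. On $W_0^{1,p}(\Omega;\bb R^d)$ put
\[
	\mc Q(\bm u) = \sum_{i=1}^d\int_\Omega\lb A(x)\Grad u_i,\Grad u_i\rb^{p/2}\,\d x - \int_\Omega F(\bm u)\,\d x, \qquad \mc G(\bm u) = \int_\Omega G(\bm u)\,\d x,
\]
so that $\mc Q$ is $p$-homogeneous and $\mc G$ is $p^*$-homogeneous, and set $S = \inf\{\mc Q(\bm u):\bm u\in W_0^{1,p}(\Omega;\bb R^d),\ \mc G(\bm u)=1\}$. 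Using the Rayleigh characterization of $\lambda_1(A,p)$, assumption \ref{item:A_positive_definite}, and $M_F<\lambda_1(A,p)$ one gets $\mc Q(\bm u)\ge\big(1-M_F/\lambda_1(A,p)\big)\tau^{p/2}\sum_i\norm{\Grad u_i}_{L^p}^p$, hence $\mc Q$ is coercive, and the Sobolev inequality gives $S>0$. I would then record that $\mc Q$ is of class $C^1$ and weakly lower semicontinuous (convexity of $\xi\mapsto\lb A(x)\xi,\xi\rb^{p/2}$, plus weak continuity of the $F$-term coming from compactness of $W_0^{1,p}\hookrightarrow L^p$), and that if $S$ is attained at some $\bm v$, then the Lagrange multiplier rule, the Euler identities $\Grad F(s)\cdot s = pF(s)$, $\Grad G(s)\cdot s = p^*G(s)$ (applied by testing against $\bm v$), and the dilation $\bm u = S^{1/(p^*-p)}\bm v$ produce a nontrivial weak solution of \eqref{eq:main_problem}; positivity of $S$ is exactly what forces the Lagrange multiplier to be positive and makes the dilation admissible. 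So everything reduces to showing that $S$ is attained.

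For that I would invoke the concentration-compactness principle: a minimizing sequence for $S$ is either precompact in $W_0^{1,p}$, in which case $S$ is attained, or its critical mass concentrates at finitely many $x_j\in\overline\Omega$; at each such point the change of variables $y = A(x_j)^{-1/2}(\,\cdot\,-x_j)$ turns the frozen principal part into the ordinary $p$-Laplacian and yields $\mu_j\ge(\det A(x_j))^{p/(2n)}\mc S_p(G)\,\nu_j^{p/p^*}$, where $M_G:=\max_{s\in\bb S_p^{d-1}}G(s)$, $\mc S_p$ is the best constant in $\norm{\Grad v}_{L^p(\bb R^n)}^p\ge\mc S_p\norm{v}_{L^{p^*}(\bb R^n)}^p$, and $\mc S_p(G)=\mc S_p\,M_G^{-p/p^*}$ (the last identity because the optimal vectorial profile has the separated form $V\omega_G$ with $\omega_G$ a maximizer of $G$ on $\bb S_p^{d-1}$). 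Since $x_0$ minimizes $\det A$ over $\overline\Omega$, the standard strict-subadditivity argument then shows that the single inequality
\[
	S<\mc S_\infty:=(\det A(x_0))^{p/(2n)}\mc S_p(G)
\]
rules out all concentration and forces $S$ to be attained; so it suffices to exhibit one admissible $\bm u$ with $\mc Q(\bm u)/\mc G(\bm u)^{p/p^*}<\mc S_\infty$.

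Two test functions are relevant. First, with $\varphi_1\ge0$ a first eigenfunction of $L_{A,p}$ and $\omega_G\in\bb S_p^{d-1}$ a maximizer of $G$, the identity $\sum_i|(\omega_G)_i|^p=1$ gives $\mc Q(\varphi_1\omega_G) = (\lambda_1(A,p)-F(\omega_G))\int_\Omega\varphi_1^p\,\d x$ and $\mc G(\varphi_1\omega_G) = M_G\int_\Omega\varphi_1^{p^*}\,\d x$, so (using $F(\omega_G)\ge\mu_F$) the quotient is $<\mc S_\infty$ as soon as $\mu_F$ exceeds the explicit number $\lambda_1(A,p)-(\det A(x_0))^{p/(2n)}\mc S_p\big(\int_\Omega\varphi_1^{p^*}\big)^{p/p^*}\big/\int_\Omega\varphi_1^p$, which is always $<\lambda_1(A,p)$; assumption \eqref{eq:A_lower_expansion} enters precisely here to make this number \emph{positive}, since integrating it against $\xi=\Grad\varphi_1(x)$ and using the sharp Sobolev inequality for the constant matrix $A(x_0)$ gives $\int_\Omega\lb A\Grad\varphi_1,\Grad\varphi_1\rb^{p/2} \ge (\det A(x_0))^{p/(2n)}\mc S_p\big(\int_\Omega\varphi_1^{p^*}\big)^{p/p^*}+C_0\int_\Omega|x-x_0|^\gamma|\Grad\varphi_1|^p$, a strict inequality. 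To lower the admissible threshold I would in addition test with a bubble concentrated at $x_0$: $\bm U_\epsilon = (\phi\, V_\epsilon)\omega_G$, $\phi$ a nonnegative cutoff supported in a ball $B\subset\Omega$ about $x_0$, $V_\epsilon(x)=\epsilon^{-(n-p)/p}V\big(A(x_0)^{-1/2}(x-x_0)/\epsilon\big)$, $V$ the Talenti extremal for $\mc S_p$; expanding as $\epsilon\to0$ yields $\mc Q(\bm U_\epsilon)/\mc G(\bm U_\epsilon)^{p/p^*} = \mc S_\infty + \mc E_\epsilon + (\text{cutoff errors}) - (F\text{-gain})$, where $\mc E_\epsilon$ records the contribution of the anisotropy $A(x)\ne A(x_0)$ near $x_0$, the $F$-gain is at least $c\,\mu_F\int_B(\phi V_\epsilon)^p\,\d x>0$, and \eqref{eq:A_lower_expansion} with $\gamma\le p$ (together with the continuity of $A$) keeps $\mc E_\epsilon$ and the cutoff errors of an order in $\epsilon$ no larger than $\int_B(\phi V_\epsilon)^p\,\d x$ — whose rate is $\epsilon^p$, $\epsilon^p|\log\epsilon|$ or $\epsilon^{(n-p)/(p-1)}$ according as $n>p^2$, $n=p^2$ or $n<p^2$. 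Either way one ends up with a constant $\lambda^*\in(0,\lambda_1(A,p))$ depending on $n,p,\Omega,A,G$ (not on $F$) such that $\mu_F>\lambda^*$ forces $S<\mc S_\infty$, which is consistent with $\mu_F\le M_F<\lambda_1(A,p)$.

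I expect the main obstacle to lie in the quasilinear concentration-compactness step together with the bubble estimate: obtaining the sharp reverse-Sobolev bound $\mu_j\ge(\det A(x_j))^{p/(2n)}\mc S_p(G)\,\nu_j^{p/p^*}$ at atoms for the anisotropic $p$-energy (and handling concentration at boundary points), and then controlling $\mc E_\epsilon$ and the cutoff errors finely enough — via \eqref{eq:A_lower_expansion} and the continuity of $A$ — that the subcritical gain $\sim\mu_F\int_B(\phi V_\epsilon)^p$ overcomes them; the bookkeeping is complicated by the trichotomy of $n$ relative to $p^2$ and by the need to certify $\lambda^*<\lambda_1(A,p)$.
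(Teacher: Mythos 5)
Your overall skeleton is the same as the paper's (constrained minimization of $\Phi_{A,F}$ on $\{\int_\Omega G(\bm u)\,\d x=1\}$, compactness below the anisotropic threshold $m_A^{p/(2n)}M_G^{-p/p^*}\mc S^{-1}=\mc N(A;G)^{-1}$, then a test function certifying strictness), but both halves are executed differently. For compactness you invoke concentration--compactness with the sharp local constants $(\det A(x_j))^{p/(2n)}$ at (possibly boundary) atoms; the paper instead gets this via Ekeland's principle, a.e.\ gradient convergence (the de Valeriola--Willem type lemma), Brezis--Lieb, and the $\epsilon$-sharp inequality of Section 3 -- your CC inequality at atoms is essentially equivalent to that $\epsilon$-sharp inequality, so this step is deferred heavy lifting rather than free, but the route is legitimate. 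For the strict inequality, your eigenfunction test $\varphi_1\omega_G$ does suffice for the statement: it yields the threshold $\lambda^\sharp=\lambda_1(A,p)-m_A^{p/(2n)}\mc S^{-1}\|\varphi_1\|_{p^*}^p/\|\varphi_1\|_p^p<\lambda_1(A,p)$, and \eqref{eq:A_lower_expansion} (via the constant-matrix sharp Sobolev inequality) indeed makes $\lambda^\sharp>0$, so $\mu_F>\lambda^\sharp$ forces the quotient below $\mc N(A;G)^{-1}$. The paper does something sharper with \eqref{eq:A_lower_expansion}: combining it with the Hardy--Sobolev inequality it defines $\lambda^*$ as the \emph{optimal} constant in $\|\bm u\|_{\mc W_A}^p\geq \mc S(A(x_0);G)^{-1}(\int_\Omega G(\bm u)\,\d x)^{p/p^*}+\lambda\|\bm u\|_{L^p}^p$, and the violating function for $\lambda=\mu_F>\lambda^*$ is the test function -- no explicit construction at all, and the resulting $\lambda^*$ is at most your $\lambda^\sharp$ and is the natural companion of the nonexistence threshold in Theorem \ref{theorem:pozohaev_nonexistence}.

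The genuinely problematic piece is your bubble refinement. Hypothesis \eqref{eq:A_lower_expansion} is a \emph{lower} bound on $\lb A(x)\xi,\xi\rb^{p/2}$, so it cannot control the anisotropy error $\mc E_\epsilon$ from above; with only continuity of $A$ that error is $o(1)\|\Grad V_\epsilon\|_p^p$, i.e.\ of constant order, not $o(\epsilon^p)$, and an upper expansion with $\gamma>p$ (the hypothesis of Theorem \ref{theorem:existence_gamma_large}, not of this theorem) is exactly what is needed to run that computation. Worse, in the present regime the lower bound penalizes a bubble centered at $x_0$ by $C_0\int|x-x_0|^\gamma|\Grad V_\epsilon|^p\sim\epsilon^\gamma$ with $\gamma\le p$, which is of the same order as or larger than the gain $\mu_F\int(\phi V_\epsilon)^p$, so the bubble generally cannot beat the threshold here -- consistent with the nonexistence result under \eqref{eq:B(x)_lower_bound}. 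Since your final claim leans on ``either way,'' you should drop the bubble leg and rest the proof entirely on the eigenfunction test function (or on the paper's Hardy--Sobolev argument); as written, the claimed error control in that second construction is unjustified and in fact fails.
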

Our next existence theorem concerns the case where $A$ is sufficiently flat near a global minimizer of $\det A$. It does not require strict positivity of $F$ on $\bb R^d\setminus\{0\}$. 
\begin{theorem}
\label{theorem:existence_gamma_large}
Let $p\in (1, \sqrt n]$ and let $\Omega\subset \bb R^n$ be a bounded open set. Suppose $A$, $F$ and $G$ satisfy assumptions \eqref{assumptions:AFG} and suppose there is $s\in \bb S_p^{d-1}$ for which both 
\begin{equation}
\label{eq:good_theta}
	G(s) = \max\{G(t): t\in \bb S_p^{d -1}\}
	\qquad \text{ and }\qquad
	F(s)>0. 
\end{equation}
If there is $C_0>0$, $\gamma> p$, a minimizer $x_0\in\Omega$ of $\det A$ and $\delta>0$ such that
\begin{equation}
\label{eq:A_upper_expansion}
	\lb A(x)\xi, \xi\rb^{p/2}
	\leq \lb A(x_0), \xi, \xi\rb^{p/2} + C_0|x - x_0|^\gamma |\xi|^p 
\end{equation}
for all $(x, \xi)\in B(x_0, \delta)\times \bb R^n$, then problem \eqref{eq:main_problem} has a nontrivial weak solution. 
\end{theorem}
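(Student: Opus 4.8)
The proof follows the Brezis--Nirenberg constrained-minimisation scheme mentioned in the introduction. For $\bm u\in W_0^{1,p}(\Omega;\bb R^d)$ write $\mc A(x,\Grad\bm u)=\sum_{j=1}^d\lb A(x)\Grad u_j,\Grad u_j\rb^{p/2}$ and set
\[
J(\bm u)=\int_\Omega \mc A(x,\Grad\bm u)\,\d x-\int_\Omega F(\bm u)\,\d x,
\qquad
S_{A,F,G}=\inf\bigl\{J(\bm u):\int_\Omega G(\bm u)\,\d x=1\bigr\}.
\]
By \ref{item:F_homogeneous} and the definition of $M_F$ one has $F(t)\le M_F\sum_j|t_j|^p$, and the variational characterisation of $\lambda_1(A,p)$ gives $\sum_j\int_\Omega|u_j|^p\,\d x\le\lambda_1(A,p)^{-1}\int_\Omega\mc A(x,\Grad\bm u)\,\d x$, so the inequality $M_F<\lambda_1(A,p)$ from \eqref{assumptions:AFG} yields $J(\bm u)\ge\bigl(1-M_F/\lambda_1(A,p)\bigr)\int_\Omega\mc A(x,\Grad\bm u)\,\d x$; with \ref{item:A_positive_definite} and the Sobolev inequality this makes $J$ coercive on the constraint set and forces $S_{A,F,G}>0$. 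A concentration--compactness analysis---which I would carry out, or invoke if it is already in the manuscript---shows that a minimising sequence for $S_{A,F,G}$ is precompact provided
\[
S_{A,F,G}<\inf_{x\in\overline\Omega}(\det A(x))^{p/(2n)}\,\mc S ,
\]
where $\mc S$ is the best constant in the Sobolev-type inequality $\int_{\bb R^n}\sum_j|\Grad v_j|^p\ge\mc S\bigl(\int_{\bb R^n}G(\bm v)\bigr)^{p/p^*}$ on $\bb R^n$, the weight $(\det A(x))^{p/(2n)}$ arising from the linear change of variables $y=A(x)^{-1/2}(\,\cdot\,-x)$ that freezes and normalises the leading matrix. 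Since $\det A$ is continuous on $\overline\Omega$ and attains its minimum at the interior point $x_0$, that infimum equals $(\det A(x_0))^{p/(2n)}\mc S$. Once precompactness is available the minimiser is nontrivial (the constraint forces $\int_\Omega G(\bm u)\,\d x=1$), and a Lagrange-multiplier argument---the multiplier being positive because $S_{A,F,G}>0$---together with the homogeneities of $\mc A$, $F$ and $G$ produces, after a rescaling, a nontrivial weak solution of \eqref{eq:main_problem}.

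It therefore suffices to establish the strict inequality $S_{A,F,G}<(\det A(x_0))^{p/(2n)}\mc S$. Let $s$ be as in \eqref{eq:good_theta}, let $U(x)=c_{n,p}(1+|x|^{p/(p-1)})^{-(n-p)/p}$ be the Talenti--Aubin extremal realising the scalar Sobolev constant $S$, and set $U_\epsilon(x)=\epsilon^{-(n-p)/p}U(x/\epsilon)$. Fix $\eta\in C_c^\infty(B(0,\rho))$ with $\eta\equiv1$ near the origin and $\rho$ so small that $x_0+A(x_0)^{1/2}B(0,\rho)\subset B(x_0,\delta)\cap\Omega$, and take $\bm u_\epsilon=\phi_\epsilon\, s$ with $\phi_\epsilon(x)=\eta\bigl(A(x_0)^{-1/2}(x-x_0)\bigr)\,U_\epsilon\bigl(A(x_0)^{-1/2}(x-x_0)\bigr)$. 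Because $s\in\bb S_p^{d-1}$ and $\phi_\epsilon\ge0$ one has $\mc A(x,\Grad\bm u_\epsilon)=\lb A(x)\Grad\phi_\epsilon,\Grad\phi_\epsilon\rb^{p/2}$, $F(\bm u_\epsilon)=F(s)\phi_\epsilon^p$ and $G(\bm u_\epsilon)=\bigl(\max_{\bb S_p^{d-1}}G\bigr)\phi_\epsilon^{p^*}$; forming the quotient $J(\bm u_\epsilon)/\bigl(\int_\Omega G(\bm u_\epsilon)\,\d x\bigr)^{p/p^*}$, the factor $\max G$ cancels against the corresponding factor in $\mc S$ (one checks $\mc S=(\max_{\bb S_p^{d-1}}G)^{-p/p^*}S$, the nontrivial inequality coming from Minkowski's inequality in $L^{p^*/p}$), so the claim reduces to showing, for all small $\epsilon$,
\[
\frac{\ds\int_\Omega\lb A(x)\Grad\phi_\epsilon,\Grad\phi_\epsilon\rb^{p/2}\,\d x-F(s)\int_\Omega\phi_\epsilon^p\,\d x}{\ds\Bigl(\int_\Omega\phi_\epsilon^{p^*}\,\d x\Bigr)^{p/p^*}}<(\det A(x_0))^{p/(2n)}S .
\]

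To estimate the numerator I would first apply \eqref{eq:A_upper_expansion}, valid on $\supp\phi_\epsilon\subset B(x_0,\delta)$, to bound $\int_\Omega\lb A(x)\Grad\phi_\epsilon,\Grad\phi_\epsilon\rb^{p/2}\,\d x\le\int_\Omega\lb A(x_0)\Grad\phi_\epsilon,\Grad\phi_\epsilon\rb^{p/2}\,\d x+C_0\int_\Omega|x-x_0|^\gamma|\Grad\phi_\epsilon|^p\,\d x$, and then change variables $y=A(x_0)^{-1/2}(x-x_0)$, which turns every frozen integral into $(\det A(x_0))^{1/2}$ times a flat integral of $\eta U_\epsilon$. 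Those flat integrals obey the standard Brezis--Nirenberg asymptotics: $\norm{\Grad(\eta U_\epsilon)}_p^p=\norm{\Grad U}_p^p+O(\epsilon^{(n-p)/(p-1)})$, $\norm{\eta U_\epsilon}_{p^*}^{p^*}=\norm{U}_{p^*}^{p^*}+O(\epsilon^{n/(p-1)})$, $\norm{\eta U_\epsilon}_p^p=K\epsilon^p+o(\epsilon^p)$ when $p<\sqrt n$ while $\norm{\eta U_\epsilon}_p^p\sim K\epsilon^p|\log\epsilon|$ when $p=\sqrt n$ (with $K>0$), and $\int_{\bb R^n}|y|^\gamma|\Grad(\eta U_\epsilon)|^p\,\d y=O\bigl(\epsilon^{\min\{\gamma,\,(n-p)/(p-1)\}}\bigr)$. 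Substituting, the numerator equals $(\det A(x_0))^{1/2}\bigl(\norm{\Grad U}_p^p-F(s)K\epsilon^p+o(\epsilon^p)+O(\epsilon^{\min\{\gamma,(n-p)/(p-1)\}})\bigr)$ (with $K\epsilon^p$ replaced by $K\epsilon^p|\log\epsilon|$ if $p=\sqrt n$) and the denominator equals $(\det A(x_0))^{p/(2p^*)}\norm{U}_{p^*}^p(1+o(1))$, so the ratio is $(\det A(x_0))^{p/(2n)}\bigl(S-c\,\epsilon^p+o(\epsilon^p)\bigr)$ for some $c>0$, the sign being correct precisely because $F(s)>0$---provided both error exponents strictly exceed $p$. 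Here $\gamma>p$ is hypothesised, and $(n-p)/(p-1)\ge p$ is equivalent to $p\le\sqrt n$, the borderline $p=\sqrt n$ being rescued by the extra $|\log\epsilon|$ in $\norm{\eta U_\epsilon}_p^p$. Hence the displayed inequality holds for small $\epsilon$, and the proof is complete.

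The \emph{decisive step} is the test-function estimate of the last paragraph: one must control the interplay of the gain $-F(s)K\epsilon^p$ from the subcritical term, the cut-off error $O(\epsilon^{(n-p)/(p-1)})$, and the error $O(\epsilon^{\min\{\gamma,(n-p)/(p-1)\}})$ caused by the deviation of $A$ from $A(x_0)$, and it is exactly the two hypotheses $\gamma>p$ and $p\le\sqrt n$ that make the gain dominant; the equality case $p=\sqrt n$ is the most delicate and relies on the logarithmic enhancement. The remaining ingredient---the concentration--compactness lemma pinning down $(\det A(x_0))^{p/(2n)}\mc S$ as the compactness threshold and the positivity of the Lagrange multiplier---I would treat as in the scalar anisotropic theory, and it is presumably already available at this point in the paper.
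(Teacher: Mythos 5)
Your proposal is correct and coincides in essence with the paper's own proof: the compactness machinery you delegate is exactly what is already established (Proposition \ref{prop:sufficient_condition_for_minimizer}, Proposition \ref{prop:N(A,G)_value} and Corollary \ref{coro:minimizing_solution}), and your test function $\phi_\epsilon s$ obtained by transplanting the truncated Talenti bubble through $A(x_0)^{-1/2}$ is the same construction the paper carries out with $P = D^{-1/2}O$, with identical use of \eqref{eq:A_upper_expansion}, the same asymptotics for the gradient, $L^{p^*}$, $L^p$ and weighted-gradient norms, and the same role of $\gamma>p$ and $p\le\sqrt n$ (logarithmic rescue at $n=p^2$). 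The only deviations are harmless bookkeeping in the error exponents, so nothing further is needed.
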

Theorem \ref{theorem:boundary_minimizer_existence} below concerns the case where a global minimizer $x_0$ of $\det A$ is on $\bdy \Omega$ and $\bdy\Omega$ has favorable geometry near $x_0$. This case was investigated in the scalar-valued setting and with $A(x) = a(x)^{2/p}I$ for some positive function $a$ by \cite{HaddadMontenegro2016}. To describe the meaning of favorable geometry, we give the following definition. 
\begin{defn}
The boundary of $\Omega$ is said to be \emph{interior $\theta$-singular} at $x_0\in \bdy\Omega$ with $\theta\geq 1$ if there is a constant $\delta>0$ and a sequence $(x^i)\subset\Omega$ such that $x^i\to x_0$ as $i\to\infty$ and $B(x^i, \delta|x^i - x_0|^\theta)\subset \Omega$. 
\end{defn}
\begin{example}
For $\theta\geq 1$, the set $\Omega = \{(x, y)\in \bb R^2: y> |x|^{1/\theta}\}$ is interior $\theta$-singular at the origin. Indeed, there is $\delta>0$ such that for any $0< r< 1$, the containment $B((0, r), \delta r^\theta)\subset\Omega$ holds. 
\end{example}
\begin{theorem}
\label{theorem:boundary_minimizer_existence}
Let $p\in (1, \sqrt n)$, let $\Omega \subset \bb R^n$ be a bounded open set and suppose $A$, $F$ and $G$ satisfy assumptions \eqref{assumptions:AFG}. Suppose there are $C_0>0$, 
\begin{equation}
\label{eq:gamma_largeness}
	\gamma > \max\left\{p^*, \frac{p(n - p)}{n - p^2}\right\},  
\end{equation}
a global minimizer $x_0\in\bdy\Omega$ for $\det A$ and $\delta>0$ such that \eqref{eq:A_upper_expansion} is satisfied for all $(x, \xi)\in (\Omega\cap B(x_0, \delta))\times \bb R^n$. If $\bdy\Omega$ is $\theta$ interior-singular at $x_0$ for some $1\leq \theta < \gamma\max\left\{p^*, \frac{p(n - p)}{n - p^2}\right\}^{-1}$ then problem \eqref{eq:main_problem} admits a nontrivial weak solution. 
\end{theorem}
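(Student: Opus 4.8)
\emph{Proof sketch.} The plan is to produce a solution of \eqref{eq:main_problem} by the constrained-minimization scheme underlying the other existence theorems, the one new ingredient being a family of test functions adapted to the interior $\theta$-singular geometry of $\bdy\Omega$ at $x_0$. Write $W = (W_0^{1, p}(\Omega))^d$, $G_* = \max\{G(t) : t\in\bb S_p^{d - 1}\}$, and $N(\bm u) = \sum_{k = 1}^d\int_\Omega\lb A(x)\Grad u_k, \Grad u_k\rb^{p/2}\d x - \int_\Omega F(\bm u)\d x$, and set
\[
	S_{A, F, G}(\Omega) = \inf\big\{N(\bm u) : \bm u\in W,\ \textstyle\int_\Omega G(\bm u)\d x = 1\big\},
\]
which by homogeneity equals the infimum over $\{\bm u\in W : \int_\Omega G(\bm u)\d x > 0\}$ of the scale-invariant quotient $Q(\bm u) = N(\bm u)/\big(\int_\Omega G(\bm u)\d x\big)^{p/p^*}$. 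Under \eqref{assumptions:AFG}, the bound $F(\bm u)\leq M_F\sum_k\abs{u_k}^p$ with $M_F < \lambda_1(A, p)$ and the variational definition of $\lambda_1(A, p)$ make $N$ coercive and strictly positive on the constraint set, which is nonempty by positive $p^*$-homogeneity of $G$. By a concentration-compactness argument of the type developed in the earlier sections, the infimum is attained — and a minimizer, after the homogeneity rescaling permitted by $S_{A, F, G}(\Omega) > 0$, is a nontrivial weak solution of \eqref{eq:main_problem} — provided
\[
	S_{A, F, G}(\Omega) < S_* := (\det A(x_0))^{p/(2n)}\,S_p\,G_*^{-p/p^*},
\]
where $S_p = \inf\{\int_{\bb R^n}\abs{\Grad v}^p : \int_{\bb R^n}\abs{v}^{p^*} = 1\}$; the power of $\det A(x_0)$ is the Jacobian from the linear change of variables normalizing the frozen operator $L_{A(x_0), p}$, and $x_0$ being a minimizer of $\det A$ makes $S_*$ the least level attainable by concentration anywhere in $\overline\Omega$. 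Thus everything comes down to finding $\bm w$ with $Q(\bm w) < S_*$.

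For this, fix $s\in\bb S_p^{d - 1}$ with $G(s) = G_*$ and, as in Theorem \ref{theorem:existence_gamma_large}, $F(s) > 0$. Let $(x^i)\subset\Omega$ and $\delta > 0$ witness the interior $\theta$-singularity of $\bdy\Omega$ at $x_0$; put $d_i = \abs{x^i - x_0}\to 0$ and $r_i = \delta d_i^\theta$, so $B(x^i, r_i)\subset\Omega$ and, shrinking $\delta$ if necessary, $r_i < d_i$ (as $\theta\geq 1$). Let $V > 0$ be the extremal of $-L_{A(x_0), p}V = V^{p^* - 1}$ on $\bb R^n$, set $V_\epsilon(x) = \epsilon^{-(n - p)/p}V((x - x^i)/\epsilon)$, choose cutoffs $\eta_i$ with $\eta_i \equiv 1$ on $B(x^i, r_i/2)$, $\supp\eta_i\subset B(x^i, r_i)$, $\abs{\Grad\eta_i}\leq C/r_i$, write $E_i = \eta_i V_{\epsilon_i}$, and take $\bm w_i = E_i\,s$ with $\epsilon_i = d_i^\beta$ for an exponent $\beta > \theta$ to be fixed. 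Since $\sum_k\abs{s_k}^p = 1$ and $F, G$ are homogeneous, $Q(\bm w_i)$ reduces to a scalar expression with numerator $\int_\Omega\lb A(x)\Grad E_i, \Grad E_i\rb^{p/2}\d x - F(s)\int_\Omega E_i^p\d x$ and denominator $\big(G_*\int_\Omega E_i^{p^*}\d x\big)^{p/p^*}$.

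The third step is to expand $Q(\bm w_i)$ as $i\to\infty$. The Brezis--Nirenberg-type asymptotics for truncated $L_{A(x_0), p}$-extremals give, with $\kappa = (\det A(x_0))^{1/2}S_p^{n/p}$,
\[
	\int_\Omega\lb A(x_0)\Grad E_i, \Grad E_i\rb^{p/2}\d x = \kappa + O\big((\epsilon_i/r_i)^{(n - p)/(p - 1)}\big),\quad \int_\Omega E_i^{p^*}\d x = \kappa + O\big((\epsilon_i/r_i)^{n/(p - 1)}\big),
\]
together with $\int_\Omega E_i^p\d x = c_p\,\epsilon_i^p\,(1 + o(1))$, the last integral being finite precisely because $p < \sqrt n$, i.e. $p^2 < n$ (this is where that hypothesis enters). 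For the anisotropy defect, \eqref{eq:A_upper_expansion} and $\abs{x - x_0}\leq d_i + r_i$ on $\supp E_i$ bound $\int_\Omega\big(\lb A(x)\Grad E_i, \Grad E_i\rb^{p/2} - \lb A(x_0)\Grad E_i, \Grad E_i\rb^{p/2}\big)\d x$ above by $C d_i^\gamma$; decomposing this integral over the core $\{\abs{x - x^i} < \epsilon_i\}$ and the neck $\{\epsilon_i < \abs{x - x^i} < r_i\}$, and tracking the $p/p^*$ power in the denominator, is what generates the two competing scales behind the exponents $p^*$ and $\tfrac{p(n - p)}{n - p^2}$ of \eqref{eq:gamma_largeness}. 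Assembling the pieces (and using $F(s) > 0$),
\[
	Q(\bm w_i)\leq S_* + C_1(\epsilon_i/r_i)^{(n - p)/(p - 1)} + C_2\,d_i^\gamma - C_3\,F(s)\,\epsilon_i^p + (\text{l.o.t.}),\qquad C_1, C_2, C_3 > 0.
\]

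It remains to choose $\beta$. The favourable term $-C_3 F(s)\epsilon_i^p$ dominates $C_1(\epsilon_i/r_i)^{(n - p)/(p - 1)}$ exactly when $\epsilon_i = o\big(r_i^{(n - p)/(n - p^2)}\big)$, i.e. $\beta > \theta\,\tfrac{n - p}{n - p^2}$, and it dominates $C_2 d_i^\gamma$ when $\epsilon_i^p\gg d_i^\gamma$, i.e. $\beta < \gamma/p$; the core/neck analysis fixes the remaining constraint to $\beta < \gamma/p^*$ in the regime where $p^*\geq\tfrac{p(n - p)}{n - p^2}$. An admissible exponent $\beta\in\big(\theta,\ \gamma/\mu\big)$ with $\mu = \max\{p^*,\ \tfrac{p(n - p)}{n - p^2}\}$ exists precisely because \eqref{eq:gamma_largeness} gives $\gamma > \mu$ and the hypothesis on the singularity order gives $1\leq\theta < \gamma/\mu$. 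For such a $\beta$ and $i$ large, $Q(\bm w_i) < S_*$, and the first step then yields a nontrivial weak solution of \eqref{eq:main_problem}. I expect the third step to be the main obstacle: establishing the truncated-bubble asymptotics for $L_{A(x_0), p}$ with explicit control in the ratio $\epsilon_i/r_i$, and carrying out the core/neck decomposition of the anisotropy defect precisely enough to see that \eqref{eq:gamma_largeness} together with $\theta < \gamma/\mu$ is exactly what forces $Q(\bm w_i)$ below $S_*$.
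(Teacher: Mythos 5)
Your overall strategy is the same as the paper's: reduce to the energy-threshold criterion of Proposition \ref{prop:sufficient_condition_for_minimizer} and Corollary \ref{coro:minimizing_solution}, with threshold $\mc N(A;G)^{-1}=m_A^{p/(2n)}M_G^{-p/p^*}\mc S^{-1}$ (equal to your $S_*$ because $x_0$ minimizes $\det A$), and then beat the threshold with bubbles centered at the interior points $x^i$, cut off at the inscribed-ball scale $r_i\sim d_i^{\theta}$, with concentration parameter $d_i^{\beta}$; the paper does exactly this after the linear change of variables $y=Px$ (its $\sigma$ is your $\beta$), cf. \eqref{eq:quotient_on_vector_multiple} and \eqref{eq:sigma}. (Like the paper's own proof, you also use \eqref{eq:good_theta}, i.e.\ $F(s)>0$ at a maximizer of $G$, which is not listed among the hypotheses of Theorem \ref{theorem:boundary_minimizer_existence}; that is an issue with the statement rather than with your argument.)

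The gap is in the final exponent selection, precisely where $\mu=\max\{p^*,\frac{p(n-p)}{n-p^2}\}$ must emerge. Your own estimates impose the lower bound $\beta>\theta\frac{n-p}{n-p^2}$ (so that $\epsilon_i^p$ dominates the gradient truncation error) and the upper bound $p\beta<\gamma$ (so that it dominates the anisotropy defect, which is $O(d_i^{\gamma})$ outright since the whole support of $E_i$ lies within $O(d_i)$ of $x_0$; no core/neck decomposition occurs or is needed, cf.\ \eqref{eq:gamma_grad}); in addition there is a lower bound from the $L^{p^*}$-norm truncation error (with the paper's bookkeeping in \eqref{eq:cutoff_bubble_grad_pnorm_pstar_norm} this is $p\beta>\theta p^*$, which is where the $p^*$ branch of $\mu$ actually comes from; with your sharper claimed error it is weaker and subsumed). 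The correct admissible window is therefore $\theta\mu'<p\beta<\gamma$ with $\mu'\leq\mu$ — the paper's condition \eqref{eq:sigma} — and it is nonempty exactly because $\theta<\gamma/\mu$. Your stated window $\beta\in(\theta,\gamma/\mu)$ is not this set: for $\beta$ slightly above $\theta$ the truncation error $d_i^{(\beta-\theta)(n-p)/(p-1)}$ swamps the gain $d_i^{\beta p}$ and $Q(\bm w_i)<S_*$ fails, while the ``remaining constraint'' $\beta<\gamma/p^*$ has no source in your expansion (the $p^*$ constraint is a \emph{lower} bound on $\beta$, not an upper bound tied to $\gamma$), and if enforced alongside the true lower bound it can even empty the window under the stated hypotheses when $\theta$ is close to $\gamma/\mu$. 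Replacing your choice by $\theta\max\{p^*,\frac{p(n-p)}{n-p^2}\}<p\beta<\gamma$ repairs the argument, and the rest of your outline then matches the paper's proof.
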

Our final theorem is a nonexistence result. 
\begin{theorem}
\label{theorem:pozohaev_nonexistence}
Let $p\in (1, n)$, and let $\Omega \subset \bb R^n$ be a $C^1$ bounded domain that is star-shaped with respect to $x_0\in \Omega$. Let $A = (a_{ij})\in C^1(\overline\Omega \setminus\{x_0\}; M(n, \bb R))$ satisfy \ref{item:A_continuous}, \ref{item:A_symmetric} and \ref{item:A_positive_definite} and suppose $b_{ij}(x) = \lb x - x_0, \Grad a_{ij}(x)\rb$ extends continuously to $x_0$ for all $(i, j)\in \{1, \ldots, n\}\times\{1, \ldots, n\}$. Set $B(x) = (b_{ij}(x))$ and let $F$ and $G$ satisfy \ref{item:F_homogeneous} and \ref{item:G_positively_homogeneous} respectively. If there are $C_0> 0$ and $\gamma\in (0, p]$ for which 
\begin{equation}
\label{eq:B(x)_lower_bound}
	(x - x_0)\cdot \Grad(\lb A(x)\xi, \xi\rb^{p/2})
	\geq \gamma C_0|x - x_0|^\gamma |\xi|^p
	\qquad \text{for all } (x, \xi)\in \Omega\times \bb R^n,
\end{equation}
then there is a constant $\lambda_*> 0$ such that if $M_F< \lambda_*$ then problem \eqref{eq:main_problem} has no nontrivial solution $\bm u\in C^{1}(\overline\Omega)$. 
\end{theorem}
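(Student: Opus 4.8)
The plan is to derive a Pohozaev-type identity for classical solutions of \eqref{eq:main_problem} and combine it with the equation itself (multiplied by $\bm u$) to reach a contradiction when $M_F$ is small. First I would take a weak solution $\bm u\in C^1(\overline\Omega)$ and test the $i$-th component equation against $(x - x_0)\cdot\Grad u_i$, sum over $i$, and integrate by parts on the star-shaped domain $\Omega$. The left-hand side, involving $-L_{A,p}u_i$, produces after integration by parts a bulk term of the shape $\frac{n-p}{p}\int_\Omega \sum_i\lb A\Grad u_i,\Grad u_i\rb^{p/2}$ plus a term $-\frac1p\int_\Omega (x-x_0)\cdot\Grad_x\big(\lb A(x)\Grad u_i,\Grad u_i\rb^{p/2}\big)$ coming from the explicit $x$-dependence of $A$, plus a boundary term. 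Because $\bm u=0$ on $\bdy\Omega$, the gradient on $\bdy\Omega$ is normal, $\Grad u_i = (\bdy_\nu u_i)\nu$, and using star-shapedness (so $(x-x_0)\cdot\nu\geq 0$) the boundary term has a favorable sign: it equals $-\frac{p-1}{p}\int_{\bdy\Omega}\lb A\nu,\nu\rb^{p/2}\,(\bdy_\nu u_i)^p\,(x-x_0)\cdot\nu\,d\sigma \le 0$ when combined correctly with the right-hand side boundary contribution (which vanishes since $F,G$ are evaluated at $\bm u=0$ and are homogeneous of positive degree). The right-hand side, testing $f(\bm u)+g(\bm u)$ against $(x-x_0)\cdot\Grad\bm u$, gives by the divergence theorem and the Euler relations for the homogeneous potentials $-\frac np\int_\Omega F(\bm u) - \frac n{p^*}\int_\Omega G(\bm u)$ (again the boundary terms vanish).

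Next I would also test the equation directly against $\bm u$: this yields $\int_\Omega\sum_i\lb A\Grad u_i,\Grad u_i\rb^{p/2} = \int_\Omega\big(\bm u\cdot f(\bm u) + \bm u\cdot g(\bm u)\big) = \int_\Omega F(\bm u) + \int_\Omega G(\bm u)$, using $\bm u\cdot f(\bm u) = \frac1p\bm u\cdot\Grad F(\bm u) = F(\bm u)$ and similarly $\bm u\cdot g(\bm u) = G(\bm u)$ by homogeneity of degrees $p$ and $p^*$. Combining the Pohozaev identity with this, the terms involving $\int G(\bm u)$ cancel (since $\frac{n-p}{p} - \frac{n}{p^*} = \frac{n-p}{p} - \frac{n-p}{p} = 0$ because $p^* = \frac{np}{n-p}$), leaving an identity of the form
\begin{equation*}
	\frac1p\int_\Omega (x-x_0)\cdot\Grad_x\Big(\sum_i\lb A(x)\Grad u_i,\Grad u_i\rb^{p/2}\Big)\,\d x
	+ (\text{nonnegative boundary term})
	= \Big(\frac np - \frac{n-p}{p}\Big)\int_\Omega F(\bm u)\,\d x = \int_\Omega F(\bm u)\,\d x.
\end{equation*}
Then I invoke hypothesis \eqref{eq:B(x)_lower_bound} to bound the first term below by $\frac{\gamma C_0}{p}\int_\Omega |x-x_0|^\gamma\sum_i|\Grad u_i|^p \ge 0$, and on the right I estimate $\int_\Omega F(\bm u) \le M_F\int_\Omega\sum_i|u_i|^p$ by $p$-homogeneity of $F$ and the definition of $M_F$. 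Meanwhile the testing-against-$\bm u$ identity together with \ref{item:A_positive_definite} gives $\tau\int_\Omega\sum_i|\Grad u_i|^p \le \int F(\bm u) + \int G(\bm u)$; to close the argument I need a lower bound on the left side of the combined identity that controls $\int_\Omega\sum_i|u_i|^p$ from above by something forcing $\bm u\equiv 0$ once $M_F$ is below a threshold. The cleanest route is: from the combined identity, $\int_\Omega F(\bm u) \ge 0$; if $F(\bm u)$ were not identically zero we would get strict positivity of the left-hand nonnegative terms, but that alone is not a contradiction — so instead I would go back and retain the $\int G(\bm u)$ bookkeeping more carefully, noting $\int_\Omega G(\bm u) \ge 0$ by \ref{item:G_positively_homogeneous}, and derive from the Pohozaev identity alone (before cancellation) that $\frac{\gamma C_0}{p}\int |x-x_0|^\gamma\sum_i|\Grad u_i|^p \le \int_\Omega F(\bm u) \le M_F \int_\Omega \sum_i |u_i|^p$. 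Combining with a Poincaré-type or weighted-Sobolev inequality relating $\int |x-x_0|^\gamma|\Grad u_i|^p$ and $\int|u_i|^p$ (valid since $\gamma \le p < n$, so the weight is not too singular — this uses $\Omega$ bounded and $x_0\in\Omega$, giving a Hardy–Sobolev or weighted Poincaré constant $c_0=c_0(\Omega,x_0,\gamma,p)>0$), we obtain $\frac{\gamma C_0 c_0}{p}\int_\Omega\sum_i|u_i|^p \le M_F\int_\Omega\sum_i|u_i|^p$, so choosing $\lambda_* = \frac{\gamma C_0 c_0}{p}$ forces $\int_\Omega\sum_i|u_i|^p = 0$, i.e. $\bm u\equiv 0$, whenever $M_F < \lambda_*$.

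The main obstacle I anticipate is the careful justification of the Pohozaev identity in this quasilinear anisotropic setting with $A$ only $C^1$ away from $x_0$ (and $b_{ij}$ extending continuously across $x_0$): one must check the integration by parts is legitimate despite the possible mild singularity of $\Grad A$ at $x_0$, which is exactly why the hypothesis is phrased in terms of $b_{ij}(x) = \lb x-x_0,\Grad a_{ij}(x)\rb$ extending continuously — the combination $(x-x_0)\cdot\Grad_x(\lb A\Grad u_i,\Grad u_i\rb^{p/2})$ is integrable. A secondary technical point is handling the boundary integrals on a merely $C^1$ domain and confirming the sign of the boundary term via star-shapedness; this is standard once one writes $\Grad u_i|_{\bdy\Omega} = (\bdy_\nu u_i)\nu$. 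The weighted Poincaré inequality with weight $|x-x_0|^\gamma$ for $\gamma\in(0,p]$ is the other ingredient requiring care, but since $\gamma \le p$ and $n\ge p+1$... actually $\gamma\le p<n$ suffices for the weight $|x-x_0|^\gamma\in L^1_{loc}$ and the inequality follows from standard Hardy-type arguments on bounded domains.
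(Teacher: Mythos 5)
Your proposal is correct and follows essentially the same route as the paper: the Pucci--Serrin/Pohozaev identity (in which the $G$-terms cancel because $np/p^* = n-p$), the nonnegativity of the boundary term by star-shapedness, the lower bound from hypothesis \eqref{eq:B(x)_lower_bound}, the bound $F(\bm u)\leq M_F|\bm u|_p^p$, and the Hardy--Sobolev inequality \eqref{eq:hardy_sobolev} with weight $|x-x_0|^\gamma$, yielding the same threshold $\lambda_* = \gamma C_0/(pK_0^p)$ up to the name of the constant. The only point where the paper differs is the technical justification you flagged: rather than integrating by parts directly against $(x-x_0)\cdot\Grad u_i$ (which would need more than $C^1$ regularity of $\bm u$), the paper invokes the variational identity of Pucci--Serrin together with \cite{Degiovanni2003} to cover $C^1$ solutions and the mild singularity of $\Grad A$ at $x_0$.
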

\begin{remark}
In \eqref{eq:B(x)_lower_bound}, the factor $\gamma$ in the multiplicative constant $\gamma C_0$ plays no essential role in the proof of Theorem \ref{theorem:pozohaev_nonexistence}. However, writing the multiplicative constant as such facilitates comparison between Theorems \ref{theorem:existence_gamma_small1} and \ref{theorem:pozohaev_nonexistence}. Indeed, an elementary computation shows that condition \eqref{eq:B(x)_lower_bound} implies condition \eqref{eq:A_lower_expansion}. Therefore, for sufficiently smooth $\Omega$ and sufficiently smooth $A$, if \eqref{eq:B(x)_lower_bound} holds for some minimizer $x_0$ of $\det A(x)$ relative to which $\Omega$ is star-shaped then combining Theorem \ref{theorem:existence_gamma_small1}, Theorem \ref{theorem:pozohaev_nonexistence} and the elliptic regularity theory we deduce the existence of constants $0< \lambda_*\leq \lambda^*< \lambda_1(A, p)$ such that a nontrivial $C^1$ solution to problem \eqref{eq:main_problem} exists whenever $\mu_F> \lambda^*$ and no such solution exists whenever $M_F< \lambda_*$. An example of a matrix-valued function for which the hypotheses of Theorem \ref{theorem:pozohaev_nonexistence} are satisfied, for which $x_0$ minimizes $\det A(x)$, and for which the regularity theory applies is $A(x) = M + C|x - x_0|^\gamma I$ where $M\in M(n; \bb R)$ is symmetric and positive definite and where $(\gamma, C)\in (0, p]\times(0, \infty)$. 
\end{remark}
This paper is organized as follows. In Section \ref{s:preliminaries} we discuss some preliminary notions including notational conventions, the functional space to be used in the variational argument, the first eigenvalue for $-L_{A, p}$ and properties of homogeneous functions. In Section \ref{s:sobolev_inequalities} we derive a sharp Sobolev-type inequality. The sharp constant in this inequality will be used in Section \ref{s:sufficient_condition_for_minimizer} to express a sufficient condition for the existence of a nontrivial solution to problem \eqref{eq:main_problem}. This condition is expressed as a smallness condition on the infimum of a suitably constrained energy functional. Theorems \ref{theorem:existence_gamma_small1}, \ref{theorem:existence_gamma_large} and \ref{theorem:boundary_minimizer_existence} (the existence theorems) are proved in Section \ref{s:proofs_of_existence} by exhibiting that this smallness condition is satisfied. In Section \ref{s:non_existence} we prove Theorem \ref{theorem:pozohaev_nonexistence} by using a variant of the well-known Pohozaev identity. Section \ref{s:appendix} is an appendix where we provide details of some computations that may be useful to some readers but whose inclusion in the main body of the manuscript would detract from the presentation. 
\section{Preliminaries}
\label{s:preliminaries}
Let $\Omega\subset \bb R^n$ be a bounded open set, let $p\in (1, n)$ and let $A:\overline\Omega\to M(n; \bb R)$ satisfy \ref{item:A_continuous}, \ref{item:A_symmetric} and \ref{item:A_positive_definite}. 
Define the norm $\|\cdot\|_{W_{0, A}^{1, p}(\Omega)}$ on $W_0^{1,p}(\Omega)$ by 
\begin{equation}
\label{eq:anisotropic_norm}
	\|u\|_{W_{0, A}^{1, p}(\Omega)}^p
	= \int_\Omega\lb A(x)\Grad u, \Grad u\rb^{p/2}\; \d x. 
\end{equation}
With this notation, in the case that $A\equiv I$ we recover the usual norm on $W_0^{1, p}(\Omega)$ in the sense that $\|u\|_{W_{0, I}^{1, p}(\Omega)} = \|u\|_{W_0^{1, p}(\Omega)}$ for all $u\in W_0^{1,p}(\Omega)$. When the context is clear we write $\|\cdot\|_{W_{0, A}^{1, p}}$ in place of $\|\cdot\|_{W_{0, A}^{1, p}(\Omega)}$.  The assumptions on $A$ guarantee that $\|\cdot\|_{W_{0, A}^{1, p}}$ is equivalent to the standard norm on $W_0^{1, p}$. In particular, denoting the completion of $C_c^\infty(\Omega)$ with respect to the norm $\|\cdot\|_{W_{0, A}^{1, p}(\Omega)}$ by $W_{0, A}^{1,p}(\Omega)$, we have $W_{0, A}^{1, p}(\Omega) = W_0^{1,p }(\Omega)$, where the equality is understood as an equality of sets. Despite this equivalence we retain the notational distinction between $\|\cdot\|_{W_0^{1, p}}$ and $\|\cdot\|_{W_{0, A}^{1, p}}$ as doing so will be convenient for expressing norms and inner products. For vector-valued functions $\bm u=(u_1, \ldots, u_d): \Omega \to \bb R^d$ we consider the product space
\begin{equation*}
	\mc W_A
	:= W_{0, A}^{1, p}(\Omega; \bb R^d)
	= W_{0, A}^{1, p}(\Omega)\times \ldots \times W_{0, A}^{1, p}(\Omega). 
\end{equation*}
The equivalence of the norms $\|\cdot\|_{W_0^{1, p}}$ and $\|\cdot\|_{W_{0, A}^{1, p}}$ ensures that as sets, $\mc W_A = \mc W_I = W_0^{1, p}(\Omega; \bb R^d)$. To ease notation, when making statements involving only set-theoretic notions (for example, set containment, membership in a set, etc.) we write $\mc W$ in place of $\mc W_A$ or $\mc W_I$. We will use the following norms for $\bb R^d$-valued functions: 
\begin{equation*}
\begin{split}
	\|\bm u\|_{L^p(\Omega; \bb R^d)}^p & = \|u_1\|_p^p +\ldots +  \|u_d\|_p^p\\
	\|\bm u\|_{\mc W_A}^p & := \|\bm u\|_{W_{0, A}^{1, p}(\Omega; \bb R^d)}^p = \|u_1\|_{W_{0, A}^{1, p}}^p + \ldots +\|u_d\|_{W_{0, A}^{1, p}}^p.  
\end{split}
\end{equation*}
As in the scalar-valued setting we retain the notational distinction between the norms $\|\cdot\|_{\mc W_I}$ and $\|\cdot \|_{\mc W_A}$ despite the equality of the sets $\mc W_I = \mc W = \mc W_A$. \\

From assumption \ref{item:A_symmetric} the operator $-L_{A, p}$ may be viewed as the differential of the map $u\mapsto\frac 1p\|u\|_{W_{0, A}^{1, p}(\Omega)}^p$ in the sense that 
\begin{equation*}
	\lb-L_{A, p}u, \varphi\rb_{W^{-1, p'}} 
	= \int_\Omega\lb A(x)\Grad u, \Grad u\rb^{\frac{p - 2}{2}}\lb A(x)\Grad u, \Grad \varphi\rb\; \d x
\end{equation*}
for all $u, \varphi\in W_0^{1,p}$, where $\lb\cdot, \cdot \rb_{W^{-1, p'}}$ is the dual pairing of $W_0^{1, p}$ with $W^{-1,p'}$. Correspondingly, a weak solution to problem \eqref{eq:main_problem} is a function $\bm u\in \mc W$ for which 
\begin{equation*}
	-L_{A, p}u_j = f_j(\bm u) + g_j(\bm u)
	\qquad \text{ for all }j\in \{1, \ldots, d\}, 
\end{equation*}
where the equalities are understood to hold in $W^{-1,p'}$. Although our existence theorems for problem \eqref{eq:main_problem} are formulated in terms of weak solutions, the following regularity result illustrates the fact that under sufficient smoothness assumptions on $A$ and $\Omega$, the regularity of weak solutions can be improved. The proof is standard except for the initial integrability boosting step. For the convenience of the reader, we overview the proof in Subsection \ref{ss:regularity} of the appendix. 
\begin{prop}
\label{prop:regularity}
Let $p\in (1, n)$, let $\alpha\in (0, 1)$ and suppose $\Omega \subset \bb R^n$ is a bounded domain with $\bdy\Omega \in C^{1, \alpha}$. Assume the entries of $A:\overline\Omega \to M(n; \bb R)$ are in $C^\alpha(\overline\Omega)$ and that $A$ satisfies both \ref{item:A_symmetric} and \ref{item:A_positive_definite}. Let $f = \frac 1p \Grad F$ and $g = \frac{1}{p^*}\Grad G$ for some functions $F$ and $G$ satisfying \ref{item:F_homogeneous} and \ref{item:G_positively_homogeneous} respectively. If $\bm u\in \mc W$ is a weak solution to problem \eqref{eq:main_problem} then $u\in C^{1, \alpha}(\overline\Omega)$. 
\end{prop}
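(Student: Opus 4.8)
The plan is to follow the standard bootstrap scheme for quasilinear equations of $p$-Laplacian type, with the only nonstandard ingredient being the initial integrability improvement, which is needed because the lower-order term $g(\bm u)$ carries critical Sobolev growth. First I would record the growth bounds on $f$ and $g$ coming from homogeneity: since $F\in C^1(\bb R^d)$ is $p$-homogeneous, $f=\frac1p\Grad F$ is $(p-1)$-homogeneous, so $|f(\bm s)|\leq c|\bm s|^{p-1}$; similarly $g$ is $(p^*-1)$-homogeneous, so $|g(\bm s)|\leq c|\bm s|^{p^*-1}$. Consequently each equation $-L_{A,p}u_j=f_j(\bm u)+g_j(\bm u)$ has right-hand side bounded (in absolute value) by $c(|\bm u|^{p-1}+|\bm u|^{p^*-1})$, and since $\bm u\in\mc W=W_0^{1,p}(\Omega;\bb R^d)$ we have $|\bm u|^{p-1}\in L^{p'}$ and $|\bm u|^{p^*-1}\in L^{(p^*)'}$ a priori, i.e.\ the right-hand side lies in $W^{-1,p'}$ as required for the notion of weak solution to make sense.

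The key step is the integrability boosting: I would show $\bm u\in L^q(\Omega;\bb R^d)$ for every $q<\infty$. The standard device is a Moser–Brezis–Kato type iteration: test the $j$-th equation with $\varphi=u_j|u_j|^{2(\beta-1)}\min\{|u_j|,K\}^{2\gamma}$ (a truncated power of $u_j$, truncated at level $K$ to keep $\varphi$ admissible), use the ellipticity bound \ref{item:A_positive_definite} on the left side, estimate $\lb A\Grad u_j,\Grad\varphi\rb$ from below by $c|\Grad(\text{power of }u_j)|^p$ up to lower-order terms, and on the right side split the critical term $|\bm u|^{p^*-1}$ as (bounded part)$\times|\bm u|^{p^*-p}$, absorbing the bounded factor into a Sobolev-norm term via Hölder and the Sobolev inequality, which is legitimate precisely because $|\bm u|^{p^*-p}$ is small in $L^{n/p}$ on sets where $\bm u$ is large (absolute continuity of the integral). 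Letting $K\to\infty$ and iterating the resulting reverse-Hölder-type inequality over a sequence of exponents $q_{k+1}=\frac{p^*}{p}q_k$ gives $\bm u\in L^q$ for all finite $q$. Once this is known, $f_j(\bm u)+g_j(\bm u)\in L^q$ for all finite $q$, and in particular the right-hand side is in $L^m$ for some $m>n/p$.

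From that point the argument is entirely classical and I would only cite it: with right-hand side in $L^m$, $m>n/p$, the interior and global $C^{1,\alpha_0}$ regularity theory for equations of the form $\div(\lb A\Grad u,\Grad u\rb^{(p-2)/2}A\Grad u)=h$ with $A\in C^\alpha$, $\bdy\Omega\in C^{1,\alpha}$, and $h\in L^m$ applies coordinatewise — this is DiBenedetto's and Tolksdorf's interior $C^{1,\alpha_0}$ estimates together with Lieberman's boundary version — yielding $u_j\in C^{1,\beta}(\overline\Omega)$ for some $\beta\in(0,1)$, hence $\bm u\in C^{1,\alpha}(\overline\Omega)$ after possibly decreasing $\beta$ to $\alpha$ (or one may run a final bootstrap using $f,g\in C^{0,\cdot}$ along $\bm u$ to reach the stated exponent $\alpha$). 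The main obstacle is the boosting step: one must handle the critical growth of $g$ carefully (the naive estimate just fails to close), and the truncation/absorption argument, while routine for experts, requires care in choosing test functions that are admissible in $W_0^{1,p}$ and in tracking constants through the iteration so that the exponent genuinely increases at each stage. I expect the rest to be a direct appeal to the cited regularity theory with no new difficulty, which is why the excerpt relegates the full argument to the appendix.
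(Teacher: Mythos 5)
Your proposal follows essentially the same route as the paper: a Brezis--Kato/Guedda--V\'eron style integrability boost with truncated power test functions, absorbing the critical term through the smallness of $\| 1+|\bm u|^{p^*-p}\|_{L^{n/p}}$ on the set where $|\bm u|$ is large, iterating the resulting reverse-H\"older inequality to get $\bm u\in L^q$ for every finite $q$, and then citing the classical $C^{1,\alpha}$ theory (Lieberman's boundary estimate) coordinatewise. The only cosmetic difference is that the paper inserts an explicit Moser-iteration step giving $u_j\in L^\infty$ (hence a bounded right-hand side) before invoking Lieberman, whereas you pass from ``right-hand side in $L^m$, $m>n/p$'' straight to the $C^{1,\alpha}$ citations; $m>n/p$ alone only yields boundedness, but since your boost puts the right-hand side in every finite $L^q$ this is harmless.
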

\subsection{The Variational Problem}
\label{ss:variational_problem}
In this subsection we introduce a constrained minimization problem whose solutions (if they exist) yield nontrivial weak solutions to problem \eqref{eq:main_problem}. We start by recording the following lemma which lists some basic properties of homogeneous functions that will be used throughout the manuscript. Since the proof is straight-forward we omit the details.
\begin{lemma}[Properties of homogeneous functions]
\label{lemma:properties_homogeneous_functions}
If $H:\bb R^d\to \bb R$ is homogeneous of degree $q>0$ then the following properties hold: 
\begin{enumerate}[label = {\bf (\alph*)}, ref = {\bf(\alph*)}]
	\item $H(0) = 0$. 
	\item If $H\in C^1(\bb R^d)$ then for any $j\in\{1, \ldots, d\}$, the partial derivative $\partial_jH\in C^0(\bb R^d)$ is homogeneous of degree $q- 1$. \label{item:derivatives_homogeneous}
	\item If $H\in C^1(\bb R^d)$ then for every $s\in \bb R^d$ we have $\lb s, \Grad H(s)\rb = qH(s)$. \label{item:homogeneous_grad_dot}
	\item If $H\in C^0(\bb R^d)$ then for any $\alpha \in [1, \infty)$ there is $s\in \bb S_\alpha^{d - 1}$ such that $H(s) = M_H(\alpha) = \max\{H(t): t\in \bb S_\alpha^{d -1}\}$. Moreover, the inequality $H(s) \leq M_H(\alpha)|s|_\alpha^q$ holds for all $s\in \bb R^d$. \label{item:homogeneous_upper_bound}
\end{enumerate}
\end{lemma}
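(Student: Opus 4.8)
The plan is to derive all four properties directly from the defining relation $H(\varrho s) = \varrho^q H(s)$ for $(\varrho, s) \in (0,\infty) \times \bb R^d$, together with standard calculus. For part (a), I would let $\varrho \to 0^+$ in the homogeneity relation with $s$ fixed: since $q > 0$ we get $\varrho^q H(s) \to 0$, and by continuity of $H$ at $0$ (which holds because $H \in C^0$ near the origin — or, if $H$ is only assumed homogeneous, one still argues through a fixed $s \neq 0$ and notes $H(\varrho s) \to H(0)$ along the ray) we conclude $H(0) = 0$. For part \ref{item:derivatives_homogeneous}, I would differentiate the identity $H(\varrho s) = \varrho^q H(s)$ with respect to $s_j$; the chain rule on the left gives $\varrho\, (\partial_j H)(\varrho s)$, so $\varrho\,(\partial_j H)(\varrho s) = \varrho^q (\partial_j H)(s)$, hence $(\partial_j H)(\varrho s) = \varrho^{q-1} (\partial_j H)(s)$, which is exactly homogeneity of degree $q - 1$; continuity of $\partial_j H$ is immediate from $H \in C^1$.

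For part \ref{item:homogeneous_grad_dot} (the Euler relation), I would instead differentiate the identity $H(\varrho s) = \varrho^q H(s)$ with respect to the scalar $\varrho$. The left side yields $\lb \Grad H(\varrho s), s \rb$ and the right side yields $q \varrho^{q-1} H(s)$; evaluating at $\varrho = 1$ gives $\lb \Grad H(s), s \rb = q H(s)$ for all $s \neq 0$, and the case $s = 0$ follows from part (a) since $\Grad H(0)$ is finite. For part \ref{item:homogeneous_upper_bound}, I would note that $\bb S_\alpha^{d-1}$ is compact and $H$ is continuous, so the maximum $M_H(\alpha)$ is attained at some $s \in \bb S_\alpha^{d-1}$; then for arbitrary $s \in \bb R^d \setminus \{0\}$ I would write $s = |s|_\alpha \cdot (s/|s|_\alpha)$ with $s/|s|_\alpha \in \bb S_\alpha^{d-1}$, so by homogeneity $H(s) = |s|_\alpha^q\, H(s/|s|_\alpha) \leq |s|_\alpha^q M_H(\alpha)$, and the inequality is trivial at $s = 0$ by part (a).

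None of the steps presents a genuine obstacle — this is the kind of lemma whose proof the authors explicitly omit. The only point requiring minor care is the differentiation in parts \ref{item:derivatives_homogeneous} and \ref{item:homogeneous_grad_dot}: one must make sure the differentiation is justified, which it is since $H \in C^1(\bb R^d)$ makes both sides of the homogeneity identity $C^1$ in each of the variables $\varrho$ and $s$ separately. A secondary subtlety is that part (a) as stated requires no differentiability, only that $H$ be defined and (for the clean argument) continuous at $0$; since the stronger parts assume $H \in C^1$ or $C^0$ this causes no friction.
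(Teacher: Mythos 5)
Your proposal is essentially the expected argument; the paper itself omits the proof of this lemma as straightforward, and parts \ref{item:derivatives_homogeneous}, \ref{item:homogeneous_grad_dot} and \ref{item:homogeneous_upper_bound} of your write-up (differentiating the identity $H(\varrho s)=\varrho^q H(s)$ in $s_j$ and in $\varrho$, and the compactness/rescaling argument on $\bb S_\alpha^{d-1}$) are correct and complete as stated.

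The one place you should tighten is part (a). As stated it assumes only homogeneity, with no continuity, yet both your main argument (letting $\varrho\to 0^+$ and invoking continuity of $H$ at $0$) and your fallback (continuity along a ray) use a hypothesis that is not available. No limit is needed: Definition \ref{defn:homogeneous_function} requires $H(\varrho s)=\varrho^q H(s)$ for \emph{all} $s\in\bb R^d$, including $s=0$, so $H(0)=\varrho^q H(0)$ for every $\varrho>0$; taking any $\varrho\neq 1$ forces $H(0)=0$. With that substitution your proof covers the lemma exactly as stated; the rest needs no change.
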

In Lemma \ref{lemma:properties_homogeneous_functions} \ref{item:homogeneous_upper_bound}, when $\alpha = p$ and $p$ is as in the definition of $L_{A,p}$ we write $M_H$ in place of $M_H(p)$. This notation is consistence with the notation in \eqref{eq:minF_psphere}. Next we introduce both the energy functional $\Phi_{A, F}:\mc W\to \bb R$ given by 
\begin{equation}
\label{eq:intro_Phi_AF}
	\Phi_{A, F}(\bm u)
	= \|\bm u\|_{\mc W_A}^p  - \int_\Omega F(\bm u)\;\d x
\end{equation}
and the constraint 
\begin{equation}
\label{eq:the_constraint}
	\mc M 
	= \{\bm u\in \mc W: \int_\Omega G(\bm u)\; \d x = 1\}. 
\end{equation} 
If $F$ and $G$ satisfy \ref{item:F_homogeneous} and \ref{item:G_positively_homogeneous} respectively and if $\Phi_{A, F}$ is coercive then the restriction $\Phi_{A, F}\big|_{\mc M}$ is bounded below. In this case, up to a positive constant multiple, any constrained minimizer of $\Phi_{A, F}\big|_{\mc M}$ is a weak solution to problem \eqref{eq:main_problem}. In what follows we formulate, under some hypotheses on $A$ and $F$, an equivalent condition for the coercivity of $\Phi_{A, F}$. 

From assumption \ref{item:A_positive_definite} we have
\begin{equation}
\label{eq:LAp_eigenquotient}
	\frac{\|u\|_{W_{0, A}^{1, p}}^p}{\|u\|_p^p}
	\geq \tau^{p/2}\frac{\|\Grad u\|_p^p}{\|u\|_p^p}
	\geq \tau^{p/2}\lambda_1(p)
	\qquad \text{ for all }u\in W_0^{1, p}(\Omega)\setminus\{0\}, 
\end{equation}
where $\lambda_1(p)>0$ is the first eigenvalue of $-\lap_p$ on $W_0^{1,p }(\Omega)$. Therefore, $-L_{A, p}$ has a first eigenvalue $\lambda_1(A, p)$ and this eigenvalue admits the variational characterization
\begin{equation}
\label{eq:lambda1_A}
	\lambda_1(A, p)
	= \inf\{\|u\|_{W_{0, A}^{1, p}}^p: u\in W_0^{1, p}(\Omega) \text{ and }\|u\|_p = 1\}>0,  
\end{equation}
where the strict positivity follows from \eqref{eq:LAp_eigenquotient} and the strict positivity of $\lambda_1(p)$. A standard argument based on the compactness of the embedding $W_0^{1, p}\hookrightarrow L^p$ shows that the infimum in the definition of $\lambda_1(A, p)$ is attained by some $u\in W_0^{1, p}$ for which $\|u\|_p = 1$. In fact, $\lambda_1(A, p)$ admits a strictly positive first eigenfunction as can be seen by combining the fact that $\||u|\|_{W_{0, A}^{1, p}} = \|u\|_{W_{0, A}^{1, p}}$ for all $u\in W_0^{1, p}(\Omega)$ with Trudinger's Harnack inequality \cite{Trudinger1967}. 

A simple construction shows that if $F\in C^0(\bb R^d)$ is a degree-$p$ homogeneous function for which $M_F>0$, then there is $\bm u\in \mc W$ for which $\int_\Omega F(\bm u)\; \d x> 0$, see Lemma \ref{lemma:simple_construction} in the appendix for details. For any such $F$, and for any $\bm u\in \mc W$ for which $\int_\Omega F(\bm u)\; \d x> 0$, the inequality 
\begin{equation*}
\begin{split}
	\int_\Omega F(\bm u)\; \d x
	& \leq M_F\int_\Omega|\bm u|_p^p \; \d x\\
	& \leq \lambda_1(p)^{-1}M_F\sum_{j = 1}^d \int_\Omega |\Grad u_j|^p\; \d x\\
	& \leq \tau^{-p/2}\lambda_1(p)^{-1}M_F\|\bm u\|_{\mc W_A}^p 
\end{split}
\end{equation*}
guarantees that the quantity
\begin{equation}
\label{eq:lambda1_FA}
	\lambda_{1, F}(A, p)
	= \inf\{\|\bm u\|_{\mc W_A}^p: \bm u\in \mc W \text{ and }\int_\Omega F(\bm u)\; \d x = 1\} 
\end{equation}
is well-defined. The following lemma characterizes $\lambda_{1, F}(A, p)$ as the smallest eigenvalue for a suitable eigenproblem. Since the proof follows from a routine argument involving Rellich's Theorem, we omit the details. 
\begin{lemma}
Let $p\in (1,n)$ and let $\Omega \subset \bb R^n$ be a bounded domain. If $A$ satisfies \ref{item:A_continuous}, \ref{item:A_symmetric}, \ref{item:A_positive_definite} and if $F$ satisfies both \ref{item:F_homogeneous} and $M_F>0$, then  $\lambda_{1, F}(A, p)$ as defined in \eqref{eq:lambda1_FA} is the smallest positive eigenvalue for the problem 
\begin{equation}
\label{eq:LAp_f_eigenproblem}
\begin{cases}
	-L_{A, p}\bm u = \lambda f(\bm u) & \text{ in }\Omega\\
	\bm u = 0 & \text{ on }\bdy\Omega, 
\end{cases}
\end{equation}
where $f = \frac{\Grad F}{p}$. 
\end{lemma}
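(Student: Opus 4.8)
The plan is to run the direct method of the calculus of variations followed by the Lagrange multiplier rule. Throughout write $\Psi(\bm v) = \norm{\bm v}_{\mc W_A}^p$ and $\Theta(\bm v) = \int_\Omega F(\bm v)\,\d x$ for $\bm v\in\mc W$, so that \eqref{eq:lambda1_FA} reads $\lambda_{1,F}(A,p) = \inf\{\Psi(\bm v): \Theta(\bm v) = 1\}$; the admissible set is nonempty since $M_F>0$. To attain the infimum, take a minimizing sequence $(\bm u_k)\subset\mc W$ with $\Theta(\bm u_k) = 1$ and $\Psi(\bm u_k)\to\lambda_{1,F}(A,p)$. By equivalence of $\norm{\cdot}_{\mc W_A}$ with the standard norm on $W_0^{1,p}(\Omega;\bb R^d)$ the sequence is bounded there, so after passing to a subsequence $\bm u_k\weakconv\bm u$ in $\mc W$ and, by Rellich's theorem, $\bm u_k\to\bm u$ in $L^p(\Omega;\bb R^d)$ and a.e.\ in $\Omega$, with $|\bm u_k|_p\leq h$ a.e.\ for some $h\in L^p(\Omega)$. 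Since $F\in C^1(\bb R^d)$ is $p$-homogeneous, Lemma \ref{lemma:properties_homogeneous_functions} \ref{item:homogeneous_upper_bound} applied to $\pm F$ gives $|F(\bm u_k)|\leq C|\bm u_k|_p^p\leq Ch^p\in L^1(\Omega)$, so dominated convergence yields $\Theta(\bm u) = 1$; in particular $\bm u\neq 0$. Because $\xi\mapsto\lb A(x)\xi,\xi\rb^{p/2}$ is convex for $p\geq 1$, the functional $\Psi$ is convex and strongly continuous, hence weakly lower semicontinuous, so $\Psi(\bm u)\leq\liminf_k\Psi(\bm u_k) = \lambda_{1,F}(A,p)$; as $\bm u$ is admissible, equality holds and $\bm u$ is a minimizer.

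Next I would extract the Euler--Lagrange equation. Both $\Psi$ and $\Theta$ are $C^1$ on $\mc W$ — for $\Theta$ one uses that $\Grad F$ is continuous and $(p-1)$-homogeneous, so the associated Nemytskii operator maps $L^p$ boundedly into $L^{p'}$ — with
\begin{equation*}
	\lb\Psi'(\bm u),\bm\varphi\rb = p\sum_{j=1}^d\int_\Omega\lb A(x)\Grad u_j,\Grad u_j\rb^{\frac{p-2}{2}}\lb A(x)\Grad u_j,\Grad\varphi_j\rb\,\d x,
	\qquad
	\lb\Theta'(\bm u),\bm\varphi\rb = \int_\Omega\lb\Grad F(\bm u),\bm\varphi\rb\,\d x.
\end{equation*}
By Lemma \ref{lemma:properties_homogeneous_functions} \ref{item:homogeneous_grad_dot}, $\lb\Theta'(\bm u),\bm u\rb = p\,\Theta(\bm u) = p\neq 0$, so $\Theta'(\bm u)\neq 0$ and the Lagrange multiplier theorem furnishes $\mu\in\bb R$ with $\Psi'(\bm u) = \mu\,\Theta'(\bm u)$ in $\mc W^*$. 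Since $f = \frac1p\Grad F$, this is precisely $-L_{A,p}u_j = \mu f_j(\bm u)$ weakly for each $j$, i.e.\ $-L_{A,p}\bm u = \mu f(\bm u)$. Testing with $\bm\varphi=\bm u$ and using Lemma \ref{lemma:properties_homogeneous_functions} \ref{item:homogeneous_grad_dot} once more gives $\lambda_{1,F}(A,p) = \Psi(\bm u) = \mu\,\Theta(\bm u) = \mu$, so $\mu = \lambda_{1,F}(A,p) > 0$ and $\lambda_{1,F}(A,p)$ is a positive eigenvalue of \eqref{eq:LAp_f_eigenproblem}.

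To see it is the \emph{smallest} positive eigenvalue, let $\lambda>0$ be any eigenvalue of \eqref{eq:LAp_f_eigenproblem} with eigenfunction $\bm w\in\mc W\setminus\{0\}$. Testing the equation with $\bm w$ and using Lemma \ref{lemma:properties_homogeneous_functions} \ref{item:homogeneous_grad_dot} gives $\norm{\bm w}_{\mc W_A}^p = \lambda\int_\Omega F(\bm w)\,\d x$, so $\int_\Omega F(\bm w)\,\d x = \lambda^{-1}\norm{\bm w}_{\mc W_A}^p>0$. Setting $\bm v := \big(\int_\Omega F(\bm w)\,\d x\big)^{-1/p}\bm w$ makes $\Theta(\bm v)=1$ by $p$-homogeneity of $F$, hence $\lambda_{1,F}(A,p)\leq\Psi(\bm v) = \norm{\bm w}_{\mc W_A}^p\big/\int_\Omega F(\bm w)\,\d x = \lambda$. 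Thus $\lambda_{1,F}(A,p)$ does not exceed any positive eigenvalue, which together with the previous paragraph proves the claim.

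I expect the only genuinely delicate points to be the nontriviality of the minimizer $\bm u$ — needed so that $\{\Theta=1\}$ is a $C^1$-manifold near $\bm u$ and the multiplier rule is legitimate — and the weak continuity of $\Theta$ along the minimizing sequence; both rest on the compactness of the embedding $W_0^{1,p}(\Omega)\hookrightarrow L^p(\Omega)$, together with the growth control on $F$ coming from Lemma \ref{lemma:properties_homogeneous_functions} \ref{item:homogeneous_upper_bound}. Everything else reduces to routine differentiation of $\Psi$ and $\Theta$ and to Euler's identity $\lb s,\Grad F(s)\rb = pF(s)$.
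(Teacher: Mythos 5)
Your proof is correct and is essentially the argument the paper intends: the paper omits the proof of this lemma as ``a routine argument involving Rellich's Theorem,'' and your write-up (Rellich compactness plus weak lower semicontinuity of the convex norm functional to attain the infimum, the Lagrange multiplier rule with Euler's identity $\lb s,\Grad F(s)\rb=pF(s)$ to identify the multiplier as $\lambda_{1,F}(A,p)$, and testing an arbitrary eigenpair against itself to show no smaller positive eigenvalue exists) is precisely that routine argument spelled out. I see no gaps.
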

The following lemma relates the values of $\lambda_1(A, p)$ and $\lambda_{1, F}(A, p)$ defined in \eqref{eq:lambda1_A} and \eqref{eq:lambda1_FA} respectively. 
\begin{lemma}
\label{lemma:relate_lambda1_to_lambda1F}
Let $p\in (1, n)$, let $\Omega\subset \bb R^n$ be a bounded open set and let $A$ satisfy \ref{item:A_continuous}, \ref{item:A_symmetric} and \ref{item:A_positive_definite}. If $F\in C^0(\bb R^d)$ is homogeneous of order $p$ and $M_F>0$, then the first eigenvalues $\lambda_1(A, p)$ and $\lambda_{1, F}(A, p)$ defined in \eqref{eq:lambda1_A} and \eqref{eq:lambda1_FA} respectively are related by 
\begin{equation*}
	M_F\lambda_{1, F}(A, p) = \lambda_1(A, p). 
\end{equation*}
\end{lemma}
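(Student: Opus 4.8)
The plan is to prove the two inequalities $M_F\lambda_{1,F}(A,p)\le\lambda_1(A,p)$ and $M_F\lambda_{1,F}(A,p)\ge\lambda_1(A,p)$ separately, each by producing an explicit competitor for one constrained infimum out of a competitor for the other. Throughout I would use Lemma \ref{lemma:properties_homogeneous_functions}\ref{item:homogeneous_upper_bound} (with $\alpha=p$), which supplies a point $s^*\in\bb S_p^{d-1}$ with $F(s^*)=M_F$ and the pointwise bound $F(s)\le M_F|s|_p^p$ on all of $\bb R^d$, together with the variational characterization \eqref{eq:lambda1_A} and the fact (recorded just after it) that $\lambda_1(A,p)$ is attained.

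For $M_F\lambda_{1,F}(A,p)\le\lambda_1(A,p)$, I would take a minimizer $u\in W_0^{1,p}(\Omega)$ for \eqref{eq:lambda1_A}, so $\|u\|_p=1$ and $\|u\|_{W_{0,A}^{1,p}}^p=\lambda_1(A,p)$; replacing $u$ by $|u|$ and using $\||u|\|_{W_{0,A}^{1,p}}=\|u\|_{W_{0,A}^{1,p}}$ and $\||u|\|_p=\|u\|_p$, I may assume $u\ge 0$. Set $\bm u:=u\,s^*=(s_1^*u,\dots,s_d^*u)\in\mc W$. Since $\sum_j|s_j^*|^p=1$, one gets $\|\bm u\|_{\mc W_A}^p=\sum_j|s_j^*|^p\|u\|_{W_{0,A}^{1,p}}^p=\lambda_1(A,p)$, while degree-$p$ homogeneity of $F$ and $u\ge 0$ give $\int_\Omega F(\bm u)\,\d x=F(s^*)\int_\Omega u^p\,\d x=M_F>0$. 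Hence $M_F^{-1/p}\bm u$ lies in the constraint set of \eqref{eq:lambda1_FA} and has $\mc W_A$-norm to the $p$ equal to $\lambda_1(A,p)/M_F$, so $\lambda_{1,F}(A,p)\le\lambda_1(A,p)/M_F$.

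For the reverse inequality, let $\bm u\in\mc W$ be any competitor for \eqref{eq:lambda1_FA}, i.e.\ $\int_\Omega F(\bm u)\,\d x=1$. Integrating the pointwise bound $F(\bm u(x))\le M_F|\bm u(x)|_p^p=M_F\sum_j|u_j(x)|^p$ yields $1\le M_F\sum_j\|u_j\|_p^p$. On the other hand, \eqref{eq:lambda1_A} gives $\|u_j\|_{W_{0,A}^{1,p}}^p\ge\lambda_1(A,p)\|u_j\|_p^p$ for each $j$ (trivially when $u_j\equiv 0$), so $\|\bm u\|_{\mc W_A}^p=\sum_j\|u_j\|_{W_{0,A}^{1,p}}^p\ge\lambda_1(A,p)\sum_j\|u_j\|_p^p\ge\lambda_1(A,p)/M_F$. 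Taking the infimum over all such $\bm u$ gives $\lambda_{1,F}(A,p)\ge\lambda_1(A,p)/M_F$, and combining the two bounds yields $M_F\lambda_{1,F}(A,p)=\lambda_1(A,p)$.

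I do not anticipate a genuine obstacle here; the argument is a short two-sided comparison. The only points needing a little care are the reduction to $u\ge 0$ in the first step, which is what lets me apply degree-$p$ homogeneity (stated in Definition \ref{defn:homogeneous_function} only for positive scalars) to $F(u\,s^*)$, and the bookkeeping of the degenerate component case $u_j\equiv 0$ when invoking \eqref{eq:lambda1_A}.
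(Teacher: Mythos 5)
Your proposal is correct and follows essentially the same two-sided comparison as the paper: the upper bound on $\lambda_{1,F}(A,p)$ via the test function $\varphi\,s^*$ built from a first eigenfunction and a maximizer $s^*$ of $F$ on $\bb S_p^{d-1}$, and the lower bound via the pointwise estimate $F(\bm u)\le M_F|\bm u|_p^p$ combined with the componentwise eigenvalue inequality. Your explicit reduction to $u\ge 0$ (the paper instead invokes the strictly positive first eigenfunction noted in Section \ref{s:preliminaries}) and the $M_F^{-1/p}$ normalization are only cosmetic differences.
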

\begin{proof}
For any $\bm u\in \mc W$ for which $\int_\Omega F(\bm u)\; \d x = 1$, Lemma \ref{lemma:properties_homogeneous_functions} \ref{item:homogeneous_upper_bound} gives $1\leq M_F\|\bm u\|_p^p$, from which we routinely obtain 
\begin{equation*}
	\|\bm u\|_{\mc W_A}^p
	\geq \frac{\|\bm u\|_{\mc W_A}^p}{M_F\|\bm u\|_p^p}
	\geq M_F^{-1}\lambda_1(A, p). 
\end{equation*}
Taking the infimum over all $\bm u\in \mc W$ satisfying $\int_\Omega F(\bm u)\; \d x = 1$ gives $M_F\lambda_{1, F}(A, p)\geq \lambda_1(A, p)$. To show the reverse inequality, let $s = (s_1, \ldots, s_d)\in \bb S_p^{d - 1}$ satisfy $M_F = F(s)$ and let $\varphi\in W_0^{1, p}(\Omega)$ be a first eigenfunction for $\lambda_1(A, p)$. Setting $\bm u = (s_1\varphi, \ldots, s_d\varphi)$ we have $\|\bm u\|_{\mc W_A} = \|\varphi\|_{W_{0, A}^{1, p}}$ and 
\begin{equation*}
	\int_\Omega F(\bm u)\; \d x
	= M_F \int_\Omega |\varphi|^p\; \d x
	> 0.
\end{equation*}
Since $\varphi$ is assumed to attain the infimum in \eqref{eq:lambda1_A} we get
\begin{equation*}
	\lambda_{1, F}(A, p)
	\leq \frac{\|\bm u\|_{\mc W_A}^p}{\int_\Omega F(\bm u)\;\d x}
	= \frac{\|\varphi\|_{W_{0, A}^{1, p}}^p}{M_F\|\varphi\|_p^p}
	= M_F^{-1}\lambda_1(A, p). 
\end{equation*}
\end{proof}
To close this section we establish the following equivalent condition for coercivity of $\Phi_{A, F}$. 
\begin{lemma}
\label{lemma:Phi_AF_coercive}
Let $p\in (1, n)$, let $\Omega \subset \bb R^n$ be a bounded open set and suppose $A$ satisfies \ref{item:A_continuous}, \ref{item:A_symmetric} and \ref{item:A_positive_definite}. If  $F\in C^0(\bb R^d)$ is homogeneous of degree $p$ and if $M_F> 0$ then the functional $\Phi_{A, F}$ as defined in \eqref{eq:intro_Phi_AF} is coercive if and only if $M_F< \lambda_1(A, p)$. 
\end{lemma}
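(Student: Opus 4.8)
The plan is to prove both implications by exploiting the homogeneity of $F$ and the variational characterizations of $\lambda_1(A,p)$ and $\lambda_{1,F}(A,p)$ established above. Recall that $\Phi_{A,F}(\bm u) = \|\bm u\|_{\mc W_A}^p - \int_\Omega F(\bm u)\,\d x$, and that Lemma \ref{lemma:relate_lambda1_to_lambda1F} gives $M_F\lambda_{1,F}(A,p) = \lambda_1(A,p)$ whenever $M_F > 0$. The intuition is that, because every term in $\Phi_{A,F}$ is $p$-homogeneous in $\bm u$, the functional behaves like $(1 - M_F/\lambda_1(A,p))\|\bm u\|_{\mc W_A}^p$ up to the ``worst direction,'' so coercivity is governed precisely by the sign of $\lambda_1(A,p) - M_F$.

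For the direction ``$M_F < \lambda_1(A,p)$ implies coercivity,'' first I would handle the trivial case where $\int_\Omega F(\bm u)\,\d x \leq 0$, in which $\Phi_{A,F}(\bm u) \geq \|\bm u\|_{\mc W_A}^p$ and there is nothing to prove. When $\int_\Omega F(\bm u)\,\d x > 0$, I would use the definition \eqref{eq:lambda1_FA} of $\lambda_{1,F}(A,p)$ together with the $p$-homogeneity of $F$ to obtain the scale-invariant bound $\int_\Omega F(\bm u)\,\d x \leq \lambda_{1,F}(A,p)^{-1}\|\bm u\|_{\mc W_A}^p$ (apply the definition to $\bm u$ divided by $(\int_\Omega F(\bm u)\,\d x)^{1/p}$). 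Substituting and invoking Lemma \ref{lemma:relate_lambda1_to_lambda1F} gives
\begin{equation*}
	\Phi_{A, F}(\bm u)
	\geq \left(1 - \frac{1}{\lambda_{1, F}(A, p)}\right)\|\bm u\|_{\mc W_A}^p
	= \left(1 - \frac{M_F}{\lambda_1(A, p)}\right)\|\bm u\|_{\mc W_A}^p,
\end{equation*}
and the hypothesis $M_F < \lambda_1(A,p)$ makes the constant strictly positive, so $\Phi_{A,F}(\bm u) \to \infty$ as $\|\bm u\|_{\mc W_A} \to \infty$.

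For the converse, I would argue by contrapositive: assuming $M_F \geq \lambda_1(A,p)$, I would exhibit a ray along which $\Phi_{A,F}$ stays bounded above (indeed $\leq 0$), contradicting coercivity. Let $s \in \bb S_p^{d-1}$ attain $F(s) = M_F$ (Lemma \ref{lemma:properties_homogeneous_functions} \ref{item:homogeneous_upper_bound}) and let $\varphi \in W_0^{1,p}(\Omega)$ be a first eigenfunction for $\lambda_1(A,p)$ normalized by $\|\varphi\|_p = 1$, so $\|\varphi\|_{W_{0,A}^{1,p}}^p = \lambda_1(A,p)$. Setting $\bm u_t = t(s_1\varphi, \ldots, s_d\varphi)$ for $t > 0$, one computes $\|\bm u_t\|_{\mc W_A}^p = t^p\lambda_1(A,p)$ and, using $p$-homogeneity of $F$, $\int_\Omega F(\bm u_t)\,\d x = t^p M_F \int_\Omega |\varphi|^p\,\d x = t^p M_F$, whence $\Phi_{A,F}(\bm u_t) = t^p(\lambda_1(A,p) - M_F) \leq 0$ for all $t > 0$ while $\|\bm u_t\|_{\mc W_A} \to \infty$. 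This shows $\Phi_{A,F}$ is not coercive.

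I do not expect a serious obstacle here; the main point requiring a little care is the scaling argument that converts the definition of $\lambda_{1,F}(A,p)$ (stated with the normalization $\int_\Omega F(\bm u)\,\d x = 1$) into the homogeneous inequality $\int_\Omega F(\bm u)\,\d x \leq \lambda_{1,F}(A,p)^{-1}\|\bm u\|_{\mc W_A}^p$, which relies on both the $p$-homogeneity of $F$ and the $p$-homogeneity of $\|\cdot\|_{\mc W_A}^p$. Everything else — the nonnegative-integral case, the existence of the extremal $s$, and the existence of a first eigenfunction $\varphi$ — is either immediate or has already been recorded in the preliminaries.
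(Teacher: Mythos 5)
Your proposal is correct. The sufficiency direction (splitting off the case $\int_\Omega F(\bm u)\,\d x\leq 0$, then using the scale-invariant bound $\int_\Omega F(\bm u)\,\d x\leq \lambda_{1,F}(A,p)^{-1}\|\bm u\|_{\mc W_A}^p$ together with Lemma \ref{lemma:relate_lambda1_to_lambda1F}) is essentially identical to the paper's. For the necessity direction you diverge from the paper: the paper argues directly, first upgrading coercivity (via $p$-homogeneity) to a uniform bound $\Phi_{A,F}(\bm u)\geq \epsilon\|\bm u\|_{\mc W_A}^p$, then dividing by $\int_\Omega F(\bm u)\,\d x>0$ and taking the infimum over the constraint set to conclude $M_F<\lambda_1(A,p)$ from Lemma \ref{lemma:relate_lambda1_to_lambda1F}; you instead prove the contrapositive with the explicit ray $\bm u_t=t(s_1\varphi,\ldots,s_d\varphi)$ built from an extremal $s\in\bb S_p^{d-1}$ and a normalized first eigenfunction $\varphi$. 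Your route is a bit more self-contained for that direction: it avoids the (true, but unproved in the paper) step that coercivity of a $p$-homogeneous functional forces a lower bound of the form $\epsilon\|\bm u\|_{\mc W_A}^p$, and it does not invoke Lemma \ref{lemma:relate_lambda1_to_lambda1F} there at all, whereas the paper's version keeps the eigenfunction construction hidden inside that lemma. One point you should make explicit: the computation $\int_\Omega F(\bm u_t)\,\d x=t^pM_F\|\varphi\|_p^p$ uses the homogeneity $F(\varrho s)=\varrho^pF(s)$ only for $\varrho>0$, so on the set $\{\varphi<0\}$ the integrand involves $F(-s)$, which need not equal $M_F$; this is harmless because you may take $\varphi\geq 0$ (replace $\varphi$ by $|\varphi|$, which has the same $W_{0,A}^{1,p}$ and $L^p$ norms, or use the strictly positive first eigenfunction recorded in Section \ref{s:preliminaries}), but as written the sign issue could spoil the inequality $\Phi_{A,F}(\bm u_t)\leq 0$, so state the choice of a nonnegative eigenfunction explicitly.
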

\begin{proof}
If $\Phi_{A, F}$ is coercive then there is $\epsilon\in (0, 1)$ such that the inequality 
\begin{equation*}
	\|\bm u\|_{\mc W_A}^p - \int_\Omega F(\bm u)\; \d x
	\geq \epsilon\|\bm u\|_{\mc W_A}^p
\end{equation*}
holds for all $\bm u\in \mc W$. In particular, for any such $\epsilon$ and for any $\bm u\in \mc W$ for which $\int_\Omega F(\bm u)\; \d x > 0$ we have 
\begin{equation*}
	1\leq (1 - \epsilon)\frac{\|\bm u\|_{\mc W_A}^p}{\int_\Omega F(\bm u)\; \d x}. 
\end{equation*}
Taking the infimum over all such $\bm u$ and in view of the positivity of $\lambda_{1, F}(A, p)$ and Lemma \ref{lemma:relate_lambda1_to_lambda1F} we have 
\begin{equation*}
	1
	\leq ( 1- \epsilon)\lambda_{1, F}(A, p)
	< \lambda_{1, F}(A, p)
	= M_F^{-1}\lambda_1(A, p). 
\end{equation*}
This shows that the inequality $M_F< \lambda_1(A, p)$ is a necessary condition for the coercivity of $\Phi_{A, F}$. Next we proceed to show that this inequality is also sufficient for the coercivity of $\Phi_{A, F}$. For all $\bm u\in \mc W$ for which $\int_\Omega F(\bm u)\; \d x\leq 0$, the inequality $\Phi_{A, F}(\bm u)\geq \|\bm u\|_{\mc W_A}^p$ holds by inspection of $\Phi_{A, F}$. For $\bm u\in \mc W\setminus\{0\}$ satisfying $\int_\Omega F(\bm u)\; \d x> 0$ we have
\begin{equation*}
\begin{split}
	\Phi_{A, F}(\bm u)
	& = \left( 1- \frac{\int_\Omega F(\bm u)\; \d x}{\|\bm u\|_{\mc W_A}^p}\right)\|\bm u\|_{\mc W_A}^p\\
	& \geq \left(1 - \frac{1}{\lambda_{1, F}(A, p)}\right)\|\bm u\|_{\mc W_A}^p\\
	& = \left(1 - \frac{M_F}{\lambda_1(A, p)}\right)\|\bm u\|_{\mc W_A}^p,  
\end{split}
\end{equation*}
where the final equality holds by Lemma \ref{lemma:relate_lambda1_to_lambda1F}. We conclude that the inequality $M_F< \lambda_1(A,p)$ guarantees the coercivity of $\Phi_{A, F}$. 
\end{proof}
%
\section{Inequalities of Sobolev Type}
\label{s:sobolev_inequalities}
In Section \ref{s:sufficient_condition_for_minimizer}, we provide a sufficient condition for the existence of nontrivial solutions to problem \eqref{eq:main_problem}. This condition is expressed as a smallness threshold for the infimum of the energy functional $\Phi_{A, F}$ constrained to $\mc M$ and this threshold is quantified in terms of a sharp Sobolev type constant. The purpose of this section is to introduce and compute the value of this sharp constant.

For $p\in (1, n)$ and an open set $\Omega\subset \bb R^n$ (not necessarily bounded), the sharp constant in the classical Sobolev inequality is
\begin{equation}
\label{eq:classical_sobolev_constant}
	\mc S^{-1} 
	= \inf\{\|\Grad u\|_{L^p(\Omega)}^p: \|u\|_{L^{p^*}(\Omega)} = 1\}. 
\end{equation}
It is well-known that $\mc S^{-1}$ depends only on $n$ and $p$ \cite{Talenti1976} and that the infimum in the definition of $\mc S^{-1}$ is only attained when $\Omega = \bb R^n$. Moreover, for $\Omega = \bb R^n$ and for any $(\epsilon, x_0)\in (0, \infty)\times \bb R^n$, the infimum is attained by 
\begin{equation}
\label{eq:shifted_scaled_bubble}
	U_{\epsilon, x_0}(x)
	= \epsilon^{-\frac{n -p}{p}}U\left(\frac{x - x_0}{\epsilon}\right),
\end{equation}
where
\begin{equation}
\label{eq:normalized_bubble}
	U(x) = c_{n,p}\left(1 + |x|^{\frac{p}{p - 1}}\right)^{-\frac{n - p}{p}}
\end{equation}
and $c_{n, p}>0$ is chosen so that $\|U\|_{L^{p^*}(\bb R^n)} = 1$. A direct computation shows that $\|U_{\epsilon, x_0}\|_{L^{p^*}(\bb R^n)} = 1$ for all $(\epsilon, x_0)\in (0, \infty)\times \bb R^n$. For symmetric positive definite matrices $M\in M(n, \bb R)$ we introduce the anisotropic Sobolev constant $\mc S(M)$ defined by
\begin{equation}
\label{eq:M_sobolev_constant}
\begin{split}
	\mc S(M)^{-1}
	= \inf\left\{\|u\|_{W_{0, M}^{1, p}(\Omega)}^p: \|u\|_{L^{p^*}(\Omega)} = 1\right\}, 
\end{split}
\end{equation}
where $\|\cdot \|_{W_{0, M}^{1, p}(\Omega)}$ is as in \eqref{eq:anisotropic_norm}. 
With this notation, $\mc S^{-1} = \mc S(I)^{-1}$ is the sharp Sobolev constant in \eqref{eq:classical_sobolev_constant}. The following lemma relates the values of $\mc S$ and $\mc S(M)$. The proof follows from a routine computation using the change of variable $x\mapsto P^{-1}y$, where $P\in GL(n; \bb R)$ satisfies $PMP^\top = I_n$. See Appendix A of \cite{MontenegrodeMoura2015} for details. 
\begin{lemma}
\label{eq:constat_matrix_sobolev_constant}
Let $p\in (1, n)$ and let $\Omega \subset \bb R^n$ be an open set. If $M\in M(n;\bb R)$ is a symmetric positive definite matrix then the constants $\mc S$ and $\mc S(M)$ defined in \eqref{eq:classical_sobolev_constant} and \eqref{eq:M_sobolev_constant} are related via 
\begin{equation*}
	\mc S(M)= (\det M)^{-\frac{p}{2n}}\mc S. 
\end{equation*}
\end{lemma}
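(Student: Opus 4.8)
The plan is to reduce the anisotropic problem to the isotropic one by a linear change of variables that diagonalizes $M$ to the identity. Since $M$ is symmetric and positive definite, there is $P\in GL(n;\bb R)$ with $PMP^\top = I_n$; equivalently, writing $M = Q^\top Q$ for some $Q\in GL(n;\bb R)$ (e.g. $Q = M^{1/2}$), one has $\lb M\xi,\xi\rb = |Q\xi|^2$, and $P = Q^{-1}$ works with $\det P = (\det M)^{-1/2}$. First I would record the pointwise identity: for $u:\Omega\to\bb R$ and the substitution $y = Px$ (so $x = P^{-1}y = Qy$), if $v(y) := u(Qy)$ is defined on $\widetilde\Omega := P\Omega$, then $\Grad_x u(x) = P^\top \Grad_y v(y)$, whence $\lb M\Grad_x u,\Grad_x u\rb = |Q P^\top \Grad_y v|^2 = |\Grad_y v|^2$ because $QP^\top = Q Q^{-\top}$...

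Let me be careful here: with $v(y) = u(Qy)$ we get $\Grad_y v(y) = Q^\top \Grad_x u(Qy)$, so $\Grad_x u(x) = Q^{-\top}\Grad_y v(y) = P^\top\Grad_y v(y)$ since $P = Q^{-1}$. Then $\lb M\Grad_x u,\Grad_x u\rb = |Q\Grad_x u|^2 = |Q P^\top \Grad_y v|^2 = |\Grad_y v|^2$, using $QP^\top = Q(Q^{-1})^\top = Q Q^{-\top}$ — this is the identity only when $Q$ is chosen symmetric, e.g. $Q = M^{1/2}$, so that $Q^{-\top} = Q^{-1}$ and $QQ^{-1} = I$. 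So I would fix $Q = M^{1/2}$ from the start, $P = M^{-1/2}$, $\det P = (\det M)^{-1/2}$.

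Next I would carry out the two integral computations. With $\d x = |\det Q|\,\d y = (\det M)^{1/2}\,\d y$:
\begin{equation*}
	\|u\|_{W_{0,M}^{1,p}(\Omega)}^p
	= \int_\Omega \lb M\Grad u,\Grad u\rb^{p/2}\,\d x
	= (\det M)^{1/2}\int_{\widetilde\Omega} |\Grad v|^p\,\d y
	= (\det M)^{1/2}\|\Grad v\|_{L^p(\widetilde\Omega)}^p,
\end{equation*}
and
\begin{equation*}
	\|u\|_{L^{p^*}(\Omega)}^{p^*}
	= \int_\Omega |u|^{p^*}\,\d x
	= (\det M)^{1/2}\int_{\widetilde\Omega}|v|^{p^*}\,\d y
	= (\det M)^{1/2}\|v\|_{L^{p^*}(\widetilde\Omega)}^{p^*}.
\end{equation*}
The map $u\mapsto v$ is a bijection between $W_{0,M}^{1,p}(\Omega)$ and $W_0^{1,p}(\widetilde\Omega)$ (it sends $C_c^\infty(\Omega)$ onto $C_c^\infty(\widetilde\Omega)$ and is bicontinuous for the respective norms), so taking the infimum in \eqref{eq:M_sobolev_constant} over $u$ with $\|u\|_{L^{p^*}(\Omega)} = 1$ is the same as taking the infimum over $v\in W_0^{1,p}(\widetilde\Omega)$ with $\|v\|_{L^{p^*}(\widetilde\Omega)}^{p^*} = (\det M)^{-1/2}$. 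Using $p$-homogeneity of $v\mapsto\|\Grad v\|_{L^p}^p$ and $p^*$-homogeneity of the constraint to normalize, and recalling that $\mc S^{-1}$ in \eqref{eq:classical_sobolev_constant} is independent of the (nonempty open) domain, a short scaling computation gives
\begin{equation*}
	\mc S(M)^{-1} = (\det M)^{1/2}\,(\det M)^{-\frac{p}{2}\cdot\frac{1}{p^*}}\,\mc S^{-1},
\end{equation*}
and since $1 - \tfrac{p}{p^*} = 1 - \tfrac{n-p}{n} = \tfrac{p}{n}$, the prefactor is $(\det M)^{\frac{p}{2n}}$, i.e. $\mc S(M) = (\det M)^{-\frac{p}{2n}}\mc S$, as claimed.

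The only mildly delicate point — and the one I would write out with a little care rather than the linear-algebra bookkeeping — is the homogeneity/scaling step that disposes of the ``wrong'' normalization $\|v\|_{L^{p^*}(\widetilde\Omega)}^{p^*} = (\det M)^{-1/2}$ of the constraint: given $v$ with that normalization, set $w = tv$ with $t = (\det M)^{1/(2p^*)}$ so that $\|w\|_{L^{p^*}(\widetilde\Omega)} = 1$, and track how $\|\Grad v\|_{L^p}^p = t^{-p}\|\Grad w\|_{L^p}^p$ transforms; combining the two detrminant powers yields the exponent above. Everything else (the chain rule identity, the change-of-variables Jacobian, density of smooth compactly supported functions to legitimize the identities on all of $W_{0,M}^{1,p}$) is routine, and indeed this argument is exactly the one indicated in Appendix A of \cite{MontenegrodeMoura2015}, so I would keep the exposition brief and refer there for the remaining details.
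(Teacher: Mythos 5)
Your proposal is correct and is exactly the argument the paper has in mind: the substitution $y=Px$ with $PMP^\top=I_n$ (your $P=M^{-1/2}$), the Jacobian factor $(\det M)^{1/2}$, the renormalization via $p$- and $p^*$-homogeneity, and the domain-independence of $\mc S$, which the paper itself dispatches by citing Appendix A of \cite{MontenegrodeMoura2015}. No gaps; the exponent bookkeeping $\tfrac12-\tfrac{p}{2p^*}=\tfrac{p}{2n}$ checks out.
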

For vector-valued functions $\bm u = (u_1, \ldots, u_d)$ and for any $G\in C^0(\bb R^d)$ that is positively homogeneous of degree $p^*$ we have
\begin{equation*}
	\int_\Omega G(\bm u)\; \d x
	\leq 	M_G\int_\Omega \left(\sum_{j = 1}^d|u_j|^p\right)^{p^*/p}\; \d x, 
\end{equation*}
where $M_G = \max \{G(s): s\in \bb S_p^{d- 1}\}$ as in \eqref{eq:minF_psphere}. From this estimate, from Minkowski's inequality and from the definition of $\mc S(M)$ we have
\begin{equation}
\label{eq:as_in_this}
\begin{split}
	\left(\int_\Omega G(\bm u)\; \d x\right)^{p/p^*}
	& \leq M_G^{p/p^*}\sum_{j = 1}^d\left(\int_\Omega |u_j|^{p^*}\; \d x\right)^{p/p^*}\\
	& \leq \mc S(M)M_G^{p/p^*}\|\bm u\|_{\mc W_M}^p. 
\end{split}
\end{equation}
This computation shows that the quantity 
\begin{equation}
\label{eq:MG_sobolev_constant}
	\mc S(M; G)^{-1}
	= \inf\{\|\bm u\|_{\mc W_M}^p: \bm u\in \mc W \text{ and }\int_\Omega G(\bm u)\; \d x = 1\}
\end{equation}
is well-defined, positive and satisfies $\mc S(M; G)\leq M_G^{p/p^*}\mc S(M)$. The next lemma shows that equality holds in this inequality.  
\begin{lemma}
\label{eq:constat_matrix_G_sobolev_constant}
Let $p\in (1, n)$, let $\Omega \subset \bb R^n$ be an open set and let $M\in M(n, \bb R)$ be symmetric and positive definite. If $G\in C^0(\bb R^d)$ is positively homogeneous of degree $p^*$ then the Sobolev-type constants $\mc S(M)$ and $\mc S(M; G)$ defined in \eqref{eq:M_sobolev_constant} and \eqref{eq:MG_sobolev_constant} respectively are related via $\mc S(M; G) = M_G^{p/p^*}\mc S(M)$. 
\end{lemma}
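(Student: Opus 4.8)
The plan is to establish the reverse of the inequality $\mc S(M; G)\leq M_G^{p/p^*}\mc S(M)$ recorded in \eqref{eq:as_in_this} and the sentence following it; combined with that inequality it yields the claimed equality. Equivalently, writing both constants in reciprocal form, it suffices to prove $\mc S(M;G)^{-1}\leq M_G^{-p/p^*}\mc S(M)^{-1}$, i.e.\ to exhibit, for each competitor in the scalar problem \eqref{eq:M_sobolev_constant}, a vector-valued competitor in \eqref{eq:MG_sobolev_constant} with a correspondingly small $\mc W_M$-norm.

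First I would use Lemma~\ref{lemma:properties_homogeneous_functions}~\ref{item:homogeneous_upper_bound} (with $\alpha=p$) to fix a point $s=(s_1,\ldots,s_d)\in\bb S_p^{d-1}$ with $G(s)=M_G$; note $M_G>0$ since $G$ is positively homogeneous. Given any nonnegative $v\in W_{0,M}^{1,p}(\Omega)$ with $\|v\|_{L^{p^*}(\Omega)}=1$, I would set $\bm u = M_G^{-1/p^*}\,v\,s = M_G^{-1/p^*}(s_1 v,\ldots,s_d v)\in\mc W$. Two short computations then close the argument: because $\sum_{j=1}^d|s_j|^p=1$, one gets $\|\bm u\|_{\mc W_M}^p = M_G^{-p/p^*}\|v\|_{W_{0,M}^{1,p}}^p$; and because $v\geq 0$ and $G$ is positively homogeneous of degree $p^*$, one gets $\int_\Omega G(\bm u)\,\d x = M_G^{-1}G(s)\int_\Omega v^{p^*}\,\d x = 1$. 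Thus $\bm u$ is admissible in \eqref{eq:MG_sobolev_constant}, so $\mc S(M;G)^{-1}\leq M_G^{-p/p^*}\|v\|_{W_{0,M}^{1,p}}^p$.

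Taking the infimum over all such $v$, and using that $\||v|\|_{W_{0,M}^{1,p}}=\|v\|_{W_{0,M}^{1,p}}$ — so that restricting the competitors in \eqref{eq:M_sobolev_constant} to nonnegative functions does not change the value of $\mc S(M)^{-1}$ — gives $\mc S(M;G)^{-1}\leq M_G^{-p/p^*}\mc S(M)^{-1}$, the desired reverse inequality. There is no real obstacle here: the computation is routine, and the only points needing attention are the bookkeeping with the homogeneity exponent (which is why one works with $v\geq 0$, legitimate since taking absolute values preserves the anisotropic norm) and the observation that $s\in\bb S_p^{d-1}$ is precisely the normalization making the $\mc W_M$-norm of $v\,s$ collapse to the scalar norm of $v$.
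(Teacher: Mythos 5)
Your proposal is correct and follows essentially the same route as the paper: both pick $s\in\bb S_p^{d-1}$ with $G(s)=M_G$ and test $\mc S(M;G)^{-1}$ with vector functions of the form $(s_1 v,\ldots,s_d v)$ built from (nonnegative) near-optimizers of the scalar problem \eqref{eq:M_sobolev_constant}, the only cosmetic difference being that you normalize $\int_\Omega G(\bm u)\,\d x=1$ by an explicit $M_G^{-1/p^*}$ scaling while the paper works with the quotient along a minimizing sequence. Your remark that restricting to $v\ge 0$ is harmless because $\||v|\|_{W_{0,M}^{1,p}}=\|v\|_{W_{0,M}^{1,p}}$ correctly supplies the small justification the paper leaves implicit when it takes its minimizing sequence nonnegative.
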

\begin{proof}
The inequality $M_G^{p/p^*}\mc S(M)\geq \mc S(M; G)$ was established in the computation preceding the statement of the lemma. To show the reverse inequality, let $(\varphi^k)_{k = 1}^\infty\subset W_0^{1, p}(\Omega)$ be a sequence of nonnegative functions for which
\begin{equation*}
	\mc S(M)^{-1} + \circ(1)
	= \frac{\|\varphi^k\|_{W_{0, M}^{1, p}(\Omega)}^p}{\|\varphi^k\|_{L^{p^*}(\Omega)}^p}. 
\end{equation*}
Choose $s = (s_1, \ldots, s_d)\in \bb S_p^{d - 1}$ for which $G(s) = M_G$ and set $\bm u^k = (s_1\varphi^k, \ldots, s_d\varphi^k)$. Evidently we have both 
\begin{equation*}
	\int_\Omega G(\bm u^k)\; \d x
	= M_G\|\varphi^k\|_{L^{p^*}(\Omega)}^{p^*}
\end{equation*}
and $\|\bm u^k\|_{\mc W_M}^p= \|\varphi^k\|_{W_{0, M}^{1, p}(\Omega)}^p$. Therefore, 
\begin{equation*}
	\frac{\|\bm u^k\|_{\mc W_M}^p}{\left(\int_\Omega G(\bm u^k)\; \d x\right)^{p/p^*}}
	= M_G^{-p/p^*} \frac{\|\varphi^k\|_{W_{0, M}^{1, p}(\Omega)}^p}{\|\varphi^k\|_{L^{p^*}(\Omega)}^p}
	= M_G^{-p/p^*}\mc S(M)^{-1} + \circ(1). 
\end{equation*}
The inequality $M_G^{p/p^*}\mc S(M)\leq \mc S(M; G)$ follows immediately. 
\end{proof}
\begin{remark}
\label{remark:scaling_argument}
A simple rescaling argument shows that for all $(x_0, \delta)\in \overline\Omega \times (0,\infty)$, the quantity
\begin{equation*}
	\mc S_{x_0, \delta}^{-1}(M; G)
	:= \inf\{\|\bm u\|_{\mc W_M}^p: \bm u\in \mc W(x_0;  \delta) \text{ and }\int_\Omega G(\bm u)\; \d x = 1\}
\end{equation*}
satisfies $\mc S_{x_0, \delta}(M; G) = \mc S(M; G)$, where 
\begin{equation}
\label{eq:W_x0_delta}
	\mc W(x_0; \delta)
	:= W_0^{1, p}(\Omega\cap B(x_0, \delta); \bb R^d).
\end{equation}
\end{remark}
For non-constant coefficient matrices $A$, in place of an inequality of the form $\left(\int_\Omega G(\bm u)\; \d x\right)^{p/p^*}\leq C\|\bm u\|_{\mc W_A}^p$ as in \eqref{eq:as_in_this}, we consider the following inequality with lower-order term: 
\begin{equation}
\label{eq:sobolev_with_lower_order}
	\left(\int_\Omega G(\bm u)\; \d x\right)^{p/p^*}
	\leq C_1\|\bm u\|_{\mc W_A}^p + C_2\|\bm u\|_{L^p(\Omega; \bb R^d)}^p.
\end{equation}
In place of the sharp Sobolev type constant $\mc S(M; G)^{-1}$ defined in \eqref{eq:MG_sobolev_constant} we define 
\begin{equation*}
	\mc N(A; G)
	= \inf\{C_1: \text{there exists $C_2>0$ such that \eqref{eq:sobolev_with_lower_order} holds for all $\bm u\in \mc W_A$}\}.
\end{equation*}
The following proposition gives the explicit value of $\mc N(A; G)$ in terms of $A$, $G$ and the classical sharp Sobolev constant in \eqref{eq:classical_sobolev_constant}. 
\begin{prop}
\label{prop:N(A,G)_value}
Let $p\in (1, n)$, let $\Omega\subset \bb R^n$ be a bounded open set and let $A$ satisfy \ref{item:A_continuous}, \ref{item:A_symmetric} and \ref{item:A_positive_definite}. If $G\in C^0(\bb R^d)$ is positively homogeneous of degree $p^*$ then 
\begin{equation*}
	\mc N(A; G) 
	= m_A^{-\frac p{2n}}M_G^{p/p^*}\mc S,
\end{equation*} 
where 
\begin{equation}
\label{eq:mA_notation}
	m_A = \min\{\det A(x): x\in \overline\Omega\}.
\end{equation} 
\end{prop}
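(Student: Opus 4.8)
The plan is to establish the two inequalities $\mc N(A; G) \leq m_A^{-p/(2n)}M_G^{p/p^*}\mc S$ and $\mc N(A; G) \geq m_A^{-p/(2n)}M_G^{p/p^*}\mc S$ separately, the upper bound via a localization/partition-of-unity argument and the lower bound via a concentration argument near a minimizer of $\det A$. Throughout, set $x_0$ to be a point of $\overline\Omega$ at which $\det A(x_0) = m_A$, and write $M_0 = A(x_0)$, so that by Lemma \ref{eq:constat_matrix_G_sobolev_constant} and Lemma \ref{eq:constat_matrix_G_sobolev_constant} (the constant-matrix lemmas) we have $\mc S(M_0; G) = M_G^{p/p^*}\mc S(M_0) = M_G^{p/p^*}(\det M_0)^{-p/(2n)}\mc S = m_A^{-p/(2n)}M_G^{p/p^*}\mc S$. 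Thus the target value is exactly $\mc S(M_0; G)$, and the proposition amounts to showing $\mc N(A;G) = \mc S(A(x_0); G)$.

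For the upper bound, fix $\epsilon > 0$. By continuity of $A$ (assumption \ref{item:A_continuous}), choose $\delta > 0$ so small that $\lb A(x)\xi,\xi\rb \leq (1 + \epsilon)\lb A(y)\xi,\xi\rb$ whenever $x, y$ lie in a common ball of radius $\delta$ meeting $\overline\Omega$; by compactness of $\overline\Omega$ cover it by finitely many balls $B_i = B(y_i, \delta)$ and take a subordinate partition of unity $\{\eta_i\}$ with $\sum_i \eta_i^p \equiv 1$ on $\overline\Omega$ (one can arrange this by normalizing, at the cost of adjusting constants, or work with $\sum \eta_i = 1$ and absorb cross terms into the lower-order term). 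Writing $\bm u = \sum_i \eta_i \bm u$ and applying the constant-coefficient sharp inequality (from \eqref{eq:as_in_this} and Lemma \ref{eq:constat_matrix_G_sobolev_constant}, in the form $(\int G(\bm v))^{p/p^*} \leq \mc S(M; G)\|\bm v\|_{\mc W_M}^p$) on each piece with $M = A(y_i)$, together with Minkowski's inequality in $\ell^{p^*/p}$ to recombine the pieces, one obtains
\begin{equation*}
	\left(\int_\Omega G(\bm u)\; \d x\right)^{p/p^*}
	\leq (1 + \epsilon)\,\mc S(A(x_0); G)\,\|\bm u\|_{\mc W_A}^p + C_\epsilon \|\bm u\|_{L^p(\Omega;\bb R^d)}^p,
\end{equation*}
where the factor $\mc S(A(x_0);G)$ enters because $\det$ is continuous and $x_0$ is its global minimizer, so $(\det A(y_i))^{-p/(2n)} \leq m_A^{-p/(2n)}$ up to the $(1+\epsilon)$ slack, and the terms arising from derivatives hitting the $\eta_i$ are of order $\|\bm u\|_{L^p}^p$ and get absorbed into $C_\epsilon$. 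Letting $\epsilon \downarrow 0$ gives $\mc N(A;G) \leq \mc S(A(x_0);G)$.

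For the lower bound we test \eqref{eq:sobolev_with_lower_order} against a sequence concentrating at $x_0$. Suppose \eqref{eq:sobolev_with_lower_order} holds with constants $C_1, C_2$. Using Remark \ref{remark:scaling_argument} and the constant-matrix case, pick near-minimizers of $\mc S_{x_0,\delta}(A(x_0);G)$: functions $\bm v$ supported in $B(x_0,\delta)$, scaled to concentrate at $x_0$ (translates/dilates of the vector bubble $(s_1 U_{\epsilon,x_0}, \ldots, s_d U_{\epsilon,x_0})$ with $G(s) = M_G$, truncated by a fixed cutoff), with $\int_\Omega G(\bm v) = 1$. As the concentration scale $\epsilon \to 0$, the continuity of $A$ forces $\|\bm v\|_{\mc W_A}^p = (1 + o(1))\|\bm v\|_{\mc W_{A(x_0)}}^p$ because the coefficients are nearly constant on the shrinking effective support, while $\|\bm v\|_{L^p(\Omega;\bb R^d)}^p = o(\|\bm v\|_{\mc W_A}^p)$ since the $L^p$ norm of a concentrating bubble is of lower order than its $W^{1,p}$ norm (this is where $p < n$ is used). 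Plugging into \eqref{eq:sobolev_with_lower_order} and dividing gives $1 \leq C_1 \mc S(A(x_0);G)^{-1} + o(1)$, hence $C_1 \geq \mc S(A(x_0);G) = m_A^{-p/(2n)}M_G^{p/p^*}\mc S$; taking the infimum over admissible $C_1$ yields $\mc N(A;G) \geq m_A^{-p/(2n)}M_G^{p/p^*}\mc S$, which combined with the upper bound completes the proof. The main obstacle is the bookkeeping in the partition-of-unity step of the upper bound — tracking exactly how the cross terms and the $\nabla\eta_i$ contributions are controlled by $\|\bm u\|_{L^p}^p$ uniformly, and ensuring the $\ell^{p^*/p}$ recombination does not lose a constant beyond the $(1+\epsilon)$ — but this is a standard, if somewhat technical, argument.
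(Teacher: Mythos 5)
Your argument is correct and essentially the paper's own: the upper bound is the Aubin-type partition-of-unity $\epsilon$-sharp inequality (the paper's Lemma \ref{lemma:Nbar_epsilon_sharp_inequality}), and the lower bound is the paper's localization near a minimizer of $\det A$ using the scale invariance of Remark \ref{remark:scaling_argument} (Lemma \ref{lemma:N_bar_upper_bound}), which you phrase as a direct test-function estimate rather than a contradiction. One small caution: your parenthetical instantiation of the near-minimizers as isotropic bubbles $(s_1U_{\epsilon,x_0},\ldots,s_dU_{\epsilon,x_0})$ centered at $x_0$ only attains $\mc S(A(x_0);G)^{-1}$ when $A(x_0)$ is a multiple of the identity (and needs interior centers if $x_0\in\bdy\Omega$), so keep the abstract near-minimizers of $\mc S_{x_0,\delta}(A(x_0);G)$ provided by Remark \ref{remark:scaling_argument}, or compose the bubbles with $P$ satisfying $PA(x_0)P^\top=I$ as the paper does in Section \ref{s:proofs_of_existence}.
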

\begin{proof}[Proof of Proposition \ref{prop:N(A,G)_value}]
Define
\begin{equation*}
	\overline {\mc N}(A; G)
	= \max\{\mc S(A(x_0); G): x_0\in\overline\Omega\} 
\end{equation*}
and observe that combining Lemma \ref{eq:constat_matrix_sobolev_constant} and Lemma \ref{eq:constat_matrix_G_sobolev_constant} gives $\overline {\mc N}(A; G)
= m_A^{-\frac p{2n}}M_G^{p/p^*}\mc S$. In particular, the equality asserted by Proposition \ref{prop:N(A,G)_value} is precisely the equality $\mc N(A; G) = \overline{\mc N}(A; G)$. This equality follows by combining Lemma \ref{lemma:N_bar_upper_bound} and Lemma \ref{lemma:Nbar_epsilon_sharp_inequality} below where the inequalities $\mc N(A; G) \geq \overline{\mc N}(A; G)$ and $\mc N(A; G) \leq \overline{\mc N}(A; G)$ are established respectively. 
\end{proof}
\begin{lemma}
\label{lemma:N_bar_upper_bound}
Under the hypotheses of Proposition \ref{prop:N(A,G)_value}, if $C_1$ and $C_2$ are positive constants for which \eqref{eq:sobolev_with_lower_order} holds for all $\bm u\in \mc W$ then $C_1\geq \overline{\mc N}(A; G)$. In particular, $\overline{\mc N}(A; G) \leq \mc N(A; G)$. 
\end{lemma}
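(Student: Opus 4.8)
The plan is to test \eqref{eq:sobolev_with_lower_order} against competitors that concentrate at a point $x_0$ where $\det A$ is small, and to send the concentration radius to zero: the scaling mismatch between $\|\cdot\|_{\mc W_A}$ and $\|\cdot\|_{L^p}$ will kill the lower-order term, while the continuity of $A$ will freeze the leading term at the constant-coefficient Sobolev constant $\mc S(A(x_0);G)$. Concretely, fix $x_0\in\Omega$ and $\delta>0$ small enough that $B(x_0,\delta)\subset\Omega$. By Remark \ref{remark:scaling_argument} with $M=A(x_0)$ we have $\mc S_{x_0,\delta}(A(x_0);G)=\mc S(A(x_0);G)$, so there is $\bm w_\delta\in\mc W(x_0;\delta)$ with $\int_\Omega G(\bm w_\delta)\,\d x=1$ and $\|\bm w_\delta\|_{\mc W_{A(x_0)}}^p\le\mc S(A(x_0);G)^{-1}+\delta$ (the function $\bm w_\delta$ being extended by zero to all of $\Omega$).

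First I would compare the two anisotropic norms on $\bm w_\delta$. Let $\omega$ be the modulus of continuity of $A$ on the compact set $\overline\Omega$. For $x\in B(x_0,\delta)$, assumption \ref{item:A_positive_definite} gives the pointwise estimate $\lb A(x)\xi,\xi\rb\le(1+\tau^{-1}\omega(\delta))\lb A(x_0)\xi,\xi\rb$ for all $\xi\in\bb R^n$; raising to the power $p/2$, integrating, and summing over coordinates (using that $\nabla\bm w_\delta$ is supported in $B(x_0,\delta)$) yields $\|\bm w_\delta\|_{\mc W_A}^p\le(1+\tau^{-1}\omega(\delta))^{p/2}\big(\mc S(A(x_0);G)^{-1}+\delta\big)$. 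Next I would dispose of the lower-order term: since $\bm w_\delta$ is supported in $B(x_0,\delta)$, Hölder's inequality gives $\|\bm w_\delta\|_{L^p(\Omega;\bb R^d)}^p\le|B(x_0,\delta)|^{p/n}\|\bm w_\delta\|_{L^{p^*}(\Omega;\bb R^d)}^p$, and $\|\bm w_\delta\|_{L^{p^*}(\Omega;\bb R^d)}$ stays bounded as $\delta\to0^+$ (for instance by the Sobolev embedding together with the bound just obtained), so $\|\bm w_\delta\|_{L^p(\Omega;\bb R^d)}^p=O(\delta^p)\to0$.

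Substituting $\bm u=\bm w_\delta$ into \eqref{eq:sobolev_with_lower_order} gives $1\le C_1\|\bm w_\delta\|_{\mc W_A}^p+C_2\|\bm w_\delta\|_{L^p(\Omega;\bb R^d)}^p$; letting $\delta\to0^+$ (so $\omega(\delta)\to0$) produces $1\le C_1\,\mc S(A(x_0);G)^{-1}$, that is, $C_1\ge\mc S(A(x_0);G)$. By Lemmas \ref{eq:constat_matrix_sobolev_constant} and \ref{eq:constat_matrix_G_sobolev_constant} the map $x\mapsto\mc S(A(x);G)=M_G^{p/p^*}(\det A(x))^{-p/(2n)}\mc S$ is continuous on $\overline\Omega$ (recall $\det A>0$ there by \ref{item:A_positive_definite}), so taking the supremum over $x_0\in\Omega$ and using density of $\Omega$ in $\overline\Omega$ yields $C_1\ge\max_{x_0\in\overline\Omega}\mc S(A(x_0);G)=\overline{\mc N}(A;G)$. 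The inequality $\overline{\mc N}(A;G)\le\mc N(A;G)$ is then immediate from the definition of $\mc N(A;G)$ as the infimum of the admissible $C_1$.

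I expect the only genuinely delicate point to be arranging that the lower-order term vanishes while the leading term still converges to $\mc S(A(x_0);G)^{-1}$: this requires test functions that are simultaneously concentrated enough that $A$ is essentially frozen at $A(x_0)$ on their support and supported on a set of vanishing measure, so that $C_2\|\bm w_\delta\|_{L^p}^p$ disappears in the limit. Remark \ref{remark:scaling_argument} is precisely what reconciles these two demands, since it guarantees that the sharp constant $\mc S(A(x_0);G)$ is still realized by competitors confined to an arbitrarily small ball about $x_0$; without it one would instead have to truncate the explicit (anisotropic) extremal bubbles and check that the truncation errors are negligible for every $p\in(1,n)$, which is routine but longer.
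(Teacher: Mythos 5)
Your proposal is correct and follows essentially the same route as the paper's proof: localize near a point realizing $\mc S(A(x_0);G)$, freeze the coefficients using the continuity of $A$, kill the lower-order term via H\"older's inequality on the small ball, and invoke Remark \ref{remark:scaling_argument} to see that localization does not change the sharp constant. The only cosmetic differences are that you argue directly with explicit near-minimizers instead of by contradiction, and you recover boundary maximizers of $x\mapsto\mc S(A(x);G)$ by continuity of $\det A$ and density of $\Omega$ in $\overline\Omega$, whereas the paper works with $x_0\in\overline\Omega$ and $\Omega\cap B(x_0,\delta)$ directly.
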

\begin{proof}[Proof of Lemma \ref{lemma:N_bar_upper_bound}]
Proceeding by way of contradiction, suppose $C_1\in (0, \overline{\mc N}(A; G))$ and $C_2\in (0, \infty)$ are constants for which \eqref{eq:sobolev_with_lower_order} holds for all $\bm u\in \mc W$. By definition of $\overline{\mc N}(A; G)$ there there is $x_0\in \overline\Omega$ for which 
\begin{equation}
\label{eq:C1_small}
	C_1< \mc S(A(x_0); G).
\end{equation} 
We fix any such $x_0$ and, for ease of notation, we set $M = A(x_0)$. By hypotheses on $A$, for all $\epsilon>0$ there is $\delta>0$ such that
\begin{equation*}
	(1 - \epsilon)\lb M\xi, \xi\rb
	\leq \lb A(x)\xi,\xi\rb
	\leq (1 + \epsilon)\lb M\xi, \xi\rb
	\quad \text{ for all }(x, \xi)\in (\overline\Omega \cap B(x_0, \delta))\times \bb R^n. 
\end{equation*}
Fix $\epsilon>0$ and choose $\delta\in (0, \epsilon^{1/p})$ as such. If $\bm u\in \mc W(x_0; \delta)$ (as defined in \eqref{eq:W_x0_delta}) then we have both
\begin{equation*}
\begin{split}
	\|\bm u\|_{\mc W_A}^p
	& = \sum_{j = 1}^d\int_{\Omega \cap B(x_0, \delta)}\lb A(x)\Grad u_j, \Grad u_j\rb^{p/2}\; \d x\\
	& \leq  (1 + \epsilon)^{p/2}\|\bm u\|_{\mc W_M}^p 
\end{split}
\end{equation*}
and, from the H\"older and Sobolev inequalties,  
\begin{equation*}
\begin{split}
	\|\bm u\|_{L^p(\Omega; \bb R^d)}^p
	& \leq C\delta^p\sum_{j = 1}^d\left(\int_{\Omega\cap B(x_0, \delta)}|u_j|^{p^*}\; \d x\right)^{p/p^*}\\
	& \leq C\epsilon\sum_{j = 1}^d\|u_j\|_{p^*}^p\\
	& \leq C_3\epsilon\|\bm u\|_{\mc W_M}^p, 
\end{split}
\end{equation*}
where $C_3$ is a positive constant depending only on $n$, $p$, and $\det M$ (in particular $C_3$ is independent of $\epsilon$ and $\bm u$). Therefore, for any $\epsilon>0$ and any $\bm u\in \mc W(x_0; \delta)$ that satisfies $\int_\Omega G(\bm u)\; \d x = 1$ we have
\begin{equation}
\label{eq:snarky}
\begin{split}
	1
	& \leq C_1\|\bm u\|_{\mc W_A}^p + C_2\|\bm u\|_{L^p(\Omega; \bb R^d)}^p\\
	& \leq (C_1(1 + \epsilon)^{p/2} + C_2C_3\epsilon)\|\bm u\|_{\mc W_M}^p. 
\end{split}
\end{equation}
Since $C_1< \mc S(M; G)$, we choose $\epsilon>0$ sufficiently small so that 
\begin{equation*}
	2(C_1(1 + \epsilon)^{p/2} + C_2C_3\epsilon)
	< \mc S(M; G) + C_1
\end{equation*}
and then obtain $\delta = \delta(\epsilon)\in (0, \epsilon^{1/p})$ for which \eqref{eq:snarky} holds whenever $\bm u\in \mc W(x_0, \delta)$ satisfies $\int_\Omega G(\bm u)\; \d x= 1$. For any such $\epsilon$, $\delta$ and $\bm u$ we obtain 
\begin{equation}
\label{eq:S(M,G)_virtual_bound}
	1
	\leq (C_1(1 + \epsilon)^{p/2} + C_2C_3\epsilon)\|\bm u\|_{\mc W_M}^p
	< \frac{\mc S(M; G) +C_1}{2}\|\bm u\|_{\mc W_M}^p. 
\end{equation}
From the definition of $\mc S(M; G)$ and from Remark \ref{remark:scaling_argument}, estimate \eqref{eq:S(M,G)_virtual_bound} implies $C_1\geq \mc S(M; G)$, and this inequality contradicts inequality \eqref{eq:C1_small}. 
\end{proof}
\begin{lemma}[$\epsilon$-sharp inequality]
\label{lemma:Nbar_epsilon_sharp_inequality}
Under the hypotheses of Proposition \ref{prop:N(A,G)_value}, for every $\epsilon>0$ there is $C_\epsilon>0$ such that the inequality 
\begin{equation}
\label{eq:epsilon_sharp_inequality}
	\left(\int_\Omega G(\bm u)\; \d x\right)^{p/p^*}
	\leq (\overline{\mc N}(A; G) + \epsilon)\|\bm u\|_{\mc W_A}^p + C_\epsilon\|\bm u\|_{L^p(\Omega; \bb R^d)}^p
\end{equation}
holds for all $\bm u\in \mc W$. In particular, $\mc N(A; G)\leq \overline{\mc N}(A; G)$. 
\end{lemma}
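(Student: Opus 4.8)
\emph{Approach.} The plan is to argue by contradiction and reduce the claim to a single-scale concentration–compactness analysis. (A direct localization by a partition of unity $\{\zeta_i\}$ with $\sum_i\zeta_i^{p^*}=1$ subordinate to a cover of $\overline\Omega$ by balls on which $A$ is nearly constant does \emph{not} suffice: reassembling the localized gradient energies produces the factor $\sup_{\overline\Omega}\sum_i\zeta_i^p$, which on the overlaps is bounded below by $2^{1-p/p^*}>1$; the remedy is to localize instead at the discrete concentration set.)

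\emph{Contradiction and normalization.} Suppose \eqref{eq:epsilon_sharp_inequality} fails for some $\epsilon_0>0$: for each $k\in\bb N$ there is $\bm u^k\in\mc W$ with
\[
	\left(\int_\Omega G(\bm u^k)\,\d x\right)^{p/p^*} > (\overline{\mc N}(A;G)+\epsilon_0)\|\bm u^k\|_{\mc W_A}^p + k\|\bm u^k\|_{L^p(\Omega;\bb R^d)}^p.
\]
Since $G\geq 0$ we have $\int_\Omega G(\bm u^k)\,\d x>0$, and replacing $\bm u^k$ by $(\int_\Omega G(\bm u^k)\,\d x)^{-1/p^*}\bm u^k$ (which preserves the inequality, both sides being $p$-homogeneous in $\bm u^k$ under this scaling by the $p^*$-homogeneity of $G$) we may assume $\int_\Omega G(\bm u^k)\,\d x=1$. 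Then $\|\bm u^k\|_{\mc W_A}^p\leq (\overline{\mc N}(A;G)+\epsilon_0)^{-1}$ and $\|\bm u^k\|_{L^p(\Omega;\bb R^d)}^p<1/k$. Passing to a subsequence, $\bm u^k\weakconv\bm u$ in $\mc W$; by Rellich's theorem $\bm u^k\to\bm u$ in $L^p(\Omega;\bb R^d)$, whence $\bm u=0$. Passing to further subsequences, $G(\bm u^k)\,\d x\weakconv\nu$ and $\lb A\Grad\bm u^k,\Grad\bm u^k\rb^{p/2}\,\d x\weakconv\mu$ weakly-$*$ as finite measures on the compact set $\overline\Omega$, with $\nu(\overline\Omega)=1$ and $\mu(\overline\Omega)\leq (\overline{\mc N}(A;G)+\epsilon_0)^{-1}$; here $\lb A\Grad\bm u^k,\Grad\bm u^k\rb^{p/2}$ abbreviates $\sum_{j=1}^d\lb A\Grad u_j^k,\Grad u_j^k\rb^{p/2}$.

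\emph{Concentration and the atomic estimate.} Applying the critical concentration–compactness principle (the $p$-Laplacian version of Lions' second lemma, e.g.\ to the scalar functions $|\bm u^k|_p\to 0$) together with the pointwise bound $0\leq G(\bm u^k)\leq M_G|\bm u^k|_p^{p^*}$ from Lemma \ref{lemma:properties_homogeneous_functions} \ref{item:homogeneous_upper_bound}, the weak-$*$ limit $\nu$ is a countable sum of atoms, $\nu=\sum_\ell\nu_\ell\delta_{x_\ell}$ with $x_\ell\in\overline\Omega$ and $\sum_\ell\nu_\ell=1$. Writing $\mu_\ell=\mu(\{x_\ell\})$, the key point is the local estimate $\nu_\ell^{p/p^*}\leq\mc S(A(x_\ell);G)\,\mu_\ell$ for each $\ell$, which I would prove directly: fix $\psi\in C_c^\infty(B(x_\ell,2r))$ with $0\leq\psi\leq 1$ and $\psi\equiv 1$ near $x_\ell$, apply the sharp constant-coefficient inequality $(\int_\Omega G(\bm v)\,\d x)^{p/p^*}\leq\mc S(A(x_\ell);G)\|\bm v\|_{\mc W_{A(x_\ell)}}^p$ (valid for all $\bm v\in\mc W$ with the domain-independent constant of Lemmas \ref{eq:constat_matrix_sobolev_constant} and \ref{eq:constat_matrix_G_sobolev_constant}) to $\bm v=\psi\bm u^k$, expand $\Grad(\psi u_j^k)=\psi\Grad u_j^k+u_j^k\Grad\psi$ so that the terms carrying a factor $\Grad\psi$ are absorbed as $k\to\infty$ using $\bm u^k\to 0$ in $L^p$, use the continuity of $A$ and \ref{item:A_positive_definite} to replace $A(x_\ell)$ by $A(x)$ on $\supp\psi$ at the cost of a factor $(1+\omega_A(2r)\tau^{-1})^{p/2}$ (with $\omega_A$ a modulus of continuity for $A$ on $\overline\Omega$), then pass to the limit in $k$ and finally let $r\to 0$. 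Since $\mc S(A(x_\ell);G)=M_G^{p/p^*}(\det A(x_\ell))^{-p/(2n)}\mc S\leq M_G^{p/p^*}m_A^{-p/(2n)}\mc S=\overline{\mc N}(A;G)$, summing over $\ell$ and using subadditivity of $t\mapsto t^{p/p^*}$ (valid since $0<p/p^*<1$), $\sum_\ell\mu_\ell\leq\mu(\overline\Omega)$, and $\sum_\ell\nu_\ell=1$ yields
\[
	1=\left(\sum_\ell\nu_\ell\right)^{p/p^*}\leq\sum_\ell\nu_\ell^{p/p^*}\leq\overline{\mc N}(A;G)\sum_\ell\mu_\ell\leq\frac{\overline{\mc N}(A;G)}{\overline{\mc N}(A;G)+\epsilon_0}<1,
\]
a contradiction. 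Hence \eqref{eq:epsilon_sharp_inequality} holds, and $\mc N(A;G)\leq\overline{\mc N}(A;G)$ is then immediate from the definition of $\mc N(A;G)$.

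\emph{Main obstacle.} The load-bearing input is the atomicity of $\nu$ (equivalently of the weak-$*$ limit of $|\bm u^k|_p^{p^*}\,\d x$): this is the nontrivial half of the critical concentration–compactness lemma and is precisely what defeats the dimensional loss of the naive partition-of-unity argument. Everything else---the normalization, the estimate at each atom via a single cutoff, and the final summation---is elementary given the sharp constant-coefficient inequalities already established in this section. If one wishes to avoid the vector-valued concentration–compactness statement, one may first reduce \eqref{eq:epsilon_sharp_inequality} to the scalar case $d=1$, $G(s)=|s|^{p^*}$, via Minkowski's inequality as in \eqref{eq:as_in_this} followed by multiplication through by $M_G^{p/p^*}$, and then invoke the scalar lemma.
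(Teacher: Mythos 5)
Your argument is correct, but it is a genuinely different route from the one taken in the paper. The paper proves Lemma \ref{lemma:Nbar_epsilon_sharp_inequality} directly, by an Aubin-style partition-of-unity argument: it covers $\overline\Omega$ by balls on which $A$ is nearly constant, takes weights $\eta_i$ with $\sum_i\eta_i\equiv 1$ and $\eta_i^{1/p}\in C^1$, splits the left-hand side by Minkowski's inequality in $L^{p^*/p}$ applied to $\sum_i\eta_i G(\bm u)^{p/p^*}$ (so the weight enters the critical term as $G(\eta_i^{1/p}\bm u)$, see \eqref{eq:epsilon_sharp_start}), applies the frozen-coefficient sharp inequality on each ball, and reassembles the gradient energies using $\sum_i\eta_i=1$; the cross terms carrying $\Grad(\eta_i^{1/p})$ are absorbed into the $\|\bm u\|_{L^p(\Omega;\bb R^d)}^p$ term via Lemma \ref{lemma:elementary_Lp}. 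In particular, your opening claim that a direct localization ``does not suffice'' is too strong: the loss you describe occurs only for the normalization $\sum_i\zeta_i^{p^*}=1$ with crude reassembly, and disappears with the normalization $\sum_i\eta_i=1$ combined with the $L^{p^*/p}$ triangle inequality on the left-hand side. Your proof instead argues by contradiction: normalize $\int_\Omega G(\bm u^k)\,\d x=1$, note the minimizing-type sequence converges weakly to $0$, invoke the critical concentration--compactness lemma (applied to $|\bm u^k|_p$, with the bound $G(\bm u^k)\le M_G|\bm u^k|_p^{p^*}$) to get pure atomicity of the limit measure $\nu$, prove the atom estimate $\nu_\ell^{p/p^*}\le \mc S(A(x_\ell);G)\mu_\ell$ by a cutoff-and-freeze argument, and sum using subadditivity of $t\mapsto t^{p/p^*}$ and $\mc S(A(x_\ell);G)\le\overline{\mc N}(A;G)$. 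All of these steps check out (the rescaling preserves the violated inequality by $p$-homogeneity of both sides; $\nu(\overline\Omega)=1$ because $\overline\Omega$ is compact; atoms on $\bdy\Omega$ cause no trouble since $\psi\bm u^k\in\mc W$ and the constants $\mc S(M;G)$ are domain-independent by Lemmas \ref{eq:constat_matrix_sobolev_constant} and \ref{eq:constat_matrix_G_sobolev_constant}). The trade-off: the paper's proof is self-contained and elementary, and produces an explicit admissible $C_\epsilon$, whereas yours imports the $p$-Laplacian version of Lions' second lemma as a nontrivial black box and yields $C_\epsilon$ non-constructively; on the other hand your argument is shorter at the level of bookkeeping, and your closing remark---that the vector-valued statement reduces to the scalar case $G(s)=|s|^{p^*}$ via Minkowski as in \eqref{eq:as_in_this} followed by multiplication by $M_G^{p/p^*}$---is a legitimate simplification that would also shorten the paper's own proof.
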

The proof of Lemma \ref{lemma:Nbar_epsilon_sharp_inequality} is similar in spirit to the proof of Theorem 9 of \cite{Aubin1976}, an english translation of which is provided in Theorm 4.5 of \cite{Hebey2000}. For the convenience of the reader, we provide the details. 
\begin{proof}[Proof of Lemma \ref{lemma:Nbar_epsilon_sharp_inequality}]
The proof is a partition of unity argument. Since $A$ satisfies \ref{item:A_continuous}, \ref{item:A_symmetric} and \ref{item:A_positive_definite}, and since $\overline \Omega$ is compact, for every $\epsilon_0>0$, there are $\delta>0$, $N\in \bb N$ and $\{x^i\}_{i = 1}^N\subset \overline\Omega$ for which  both $\overline \Omega \subset \bigcup_{i = 1}^NB(x^i, \delta)$ and 
\begin{equation}
\label{eq:A_partition_estimates}
	(1 - \epsilon_0)\lb A(x^i)\xi, \xi\rb
	\leq \lb A(x)\xi, \xi\rb
	\leq (1 + \epsilon_0)\lb A(x^i)\xi, \xi\rb
	\quad \text{ for all }(x, \xi)\in (\overline\Omega \cap B_i)\times \bb R^n, 
\end{equation}
where, for ease of notation, we set $B_i = B(x^i, \delta)$. Fix $\epsilon_0>0$ and let $\delta>0$, $N\in \bb N$ and $\{x^i\}_{i = 1}^N$ be as such. Let $\{\zeta_i\}_{i = 1}^N$ be a smooth partition of unity subordinate to $\{B_i\}_{i = 1}^N$ and define
\begin{equation*}
	\eta_i = \frac{\zeta_i^{\lfloor p\rfloor  + 1}}{\sum_{j = 1}^N \zeta_j^{\lfloor p\rfloor + 1}}, 
\end{equation*}
where $\lfloor p\rfloor$ is the greatest integer that does not exceed $p$. Evidently $\supp \eta_i \subset B_i$, $\eta_i^{1/p}\in C^1$ and $\sum_{i = 1}^N \eta_i \equiv 1$ on $\overline\Omega$. For any $\bm u\in \mc W$ we have
\begin{equation}
\label{eq:epsilon_sharp_start}
\begin{split}
	\left(\int_\Omega G(\bm u)\; \d x\right)^{p/p^*}
	& = \left(\int_\Omega\left(\sum_{i = 1}^N \eta_i G(\bm u)^{p/p^*}\right)^{p^*/p}\; \d x\right)^{p/p^*}\\
	& = \left(\int_\Omega\left(\sum_{i = 1}^N \eta_i |\bm u|_p^p G\left(\frac{\bm u}{|\bm u|_p}\right)^{p/p^*}\right)^{p^*/p}\; \d x\right)^{p/p^*}\\
	& \leq \sum_{i = 1}^N \left(\int_\Omega \left(\eta_i^{1/p}|\bm u|_p\right)^{p^*}G\left(\frac{\bm u}{|\bm u|_p}\right)\; \d x\right)^{p/p^*}\\
	& = \sum_{i = 1}^N\left(\int_\Omega G(\eta_i^{1/p}\bm u)\; \d x\right)^{p/p^*}\\
	& \leq \sum_{i = 1}^N\mc S(A(x^i); G)\|\eta_i^{1/p}\bm u\|_{\mc W_{A(x^i)}}^p\\
	& \leq \overline{\mc N}(A; G)\sum_{i = 1}^N\|\eta_i^{1/p}\bm u\|_{\mc W_{A(x^i)}}^p. 
\end{split}
\end{equation}
We proceed to estimate the sum on the right-most side of this string of inequalities. For each $(i, j)\in \{1, \ldots, N\}\times\{1, \ldots, d\}$, using Cauchy's inequality and the fact that \eqref{eq:A_partition_estimates} holds on $\supp(\eta_i)$ we obtain 
\begin{equation*}
	0
	\leq \lb A(x^i)\Grad(\eta_i^{1/p}u_j), \Grad(\eta_i^{1/p}u_j)\rb
	\leq f_{ij}^2 + h_{ij}^2, 
\end{equation*}
where $f_{ij}$ and $h_{ij}$ are defined by  
\begin{equation}
\label{eq:fij_hij_definitions}
\begin{split}
	f_{ij}^2 & = \eta_i^{2/p}\left(\frac 1{1 - \epsilon_0}\lb A(x)\Grad u_j, \Grad u_j\rb + \epsilon^2 \Lambda^2|\Grad u_j|^2\right)\\
	h_{ij}^2 & = u_j^2\left(\frac 1{\epsilon^2} + \Lambda\right)\|\Grad(\eta_i^{1/p})\|_\infty^2 
\end{split}
\end{equation}
and $\Lambda$ is as in \eqref{eq:A_upper_bounded}. According to this estimate and Lemma \ref{lemma:elementary_Lp}, for all $\epsilon\in (0, 1)$ we have
\begin{equation}
\label{eq:each_ij_estimate}
\begin{split}
	\sum_{i = 1}^N\|\eta_i^{1/p}\bm u\|_{\mc W_{A(x^i)}}^p
	& \leq \sum_{i = 1}^N\sum_{j= 1}^d\int_\Omega \left(f_{ij}^2 + h_{ij}^2\right)^{p/2}\; \d x\\
	& \leq \sum_{i = 1}^N\sum_{j= 1}^d\int_\Omega \left(f_{ij} + h_{ij}\right)^p\; \d x\\
	& \leq \sum_{i = 1}^N\sum_{j= 1}^d\left((1 + \epsilon)\|f_{ij}\|_p^p + \frac{C(p)}{\epsilon^p}\|h_{ij}\|_p^p\right). 
\end{split}
\end{equation}
We proceed to separately estimate the sum over $i$ and $j$ of the norms $\|f_{ij}\|_p^p$ and $\|h_{ij}\|_p^p$. From the expression of $h_{ij}$ in \eqref{eq:fij_hij_definitions} we easily estimate
\begin{equation}
\label{eq:sum_hij_estimate}
\begin{split}
	\sum_{i = 1}^N\sum_{j = 1}^d\|h_{ij}\|_p^p
	& \leq N\max_i\|\Grad(\eta_i^{1/p})\|_\infty^p\left(\frac 1{\epsilon^2} + \Lambda\right)^{p/2}\|\bm u\|_{L^p(\Omega; \bb R^d)}^p. 
\end{split}
\end{equation}
To estimate the $p$-norm of $f_{ij}$ we define $\phi_j$ and $\psi_j$ by 
\begin{equation*}
\begin{split}
	\phi_j^2 & = \frac 1{1 - \epsilon_0}\lb A(x)\Grad u_j, \Grad u_j\rb\\
	\psi_j^2 & = \epsilon^2 \Lambda^2|\Grad u_j|^2
\end{split}	
\end{equation*}
so that the expression of $f_{ij}^2$ in \eqref{eq:fij_hij_definitions} reads $f_{ij}^2 = \eta_i^{2/p}(\phi_j^2 + \psi_j^2)$. For each $j \in \{1, \ldots, d\}$, using the fact that $\sum_{i = 1}^N\eta_i\equiv 1$ together with Lemma \ref{lemma:elementary_Lp} (applied with $\sqrt \epsilon$ in place of $\epsilon$) we have
\begin{equation*}
\begin{split}
	\sum_{i = 1}^N\|f_{ij}\|_p^p
	& = \int_\Omega\left(\sum_{i = 1}^N\eta_i\right)(\phi_j^2 + \psi_j^2)^{p/2}\; \d x\\
	& \leq \int_\Omega (\phi_j + \psi_j)^p\; \d x\\
	& \leq (1 + \epsilon^{1/2})\|\phi_j\|_p^p + \frac{C(p)}{\epsilon^{p/2}}\|\psi_j\|_p^p\\
	& \leq \left(\frac{1 + \epsilon^{1/2}}{(1 - \epsilon_0)^{p/2}} + C(p)\Lambda^p\frac{\epsilon^{p/2}}{\tau^{p/2}}\right)\|u_j\|_{W_{0, A}^{1, p}}^p, 
\end{split}
\end{equation*}
where $\tau$ is as in assumption \ref{item:A_positive_definite}. Summing this estimate over $j$ we obtain 
\begin{equation}
\label{eq:sum_fij_estimate}
\begin{split}
	\sum_{i = 1}^N\sum_{j = 1}^d\|f_{ij}\|_p^p
	& \leq \left(\frac{1 + \epsilon^{1/2}}{(1 - \epsilon_0)^{p/2}} + C(p)\Lambda^p\frac{\epsilon^{p/2}}{\tau^{p/2}}\right)\|\bm u\|_{\mc W_A}^p. 
\end{split}
\end{equation}	
Using estimates \eqref{eq:sum_fij_estimate} and \eqref{eq:sum_hij_estimate} in \eqref{eq:each_ij_estimate} gives
\begin{equation}
\label{eq:epsilon_sharp_tedious_estimate}
\begin{split}
	\sum_{i = 1}^N\|\eta_i^{1/p}\bm u\|_{\mc W_{A(x^i)}}^p
	& \leq (1 + \epsilon)\left(\frac{1 + \epsilon^{1/2}}{(1 - \epsilon_0)^{p/2}} + C(p)\Lambda^p\frac{\epsilon^{p/2}}{\tau^{p/2}}\right)\|\bm u\|_{\mc W_A}^p\\
	& + \frac{C(p)}{\epsilon^p}N\max_i\|\Grad(\eta_i^{1/p})\|_\infty^p\left(\frac 1{\epsilon^2} + \Lambda\right)^{p/2}\|\bm u\|_{L^p(\Omega; \bb R^d)}^p.
\end{split}
\end{equation}
Since both $\epsilon\in (0, 1)$ and $\epsilon_0\in (0, 1)$ are arbitrary, using estimate \eqref{eq:epsilon_sharp_tedious_estimate} in estimate \eqref{eq:epsilon_sharp_start} establishes inequality \eqref{eq:epsilon_sharp_inequality}. 
\end{proof}
In view of the equality $\overline{\mc N}(A; G) = \mc N(A;G)$ and Lemma \ref{lemma:Nbar_epsilon_sharp_inequality} we obtain the following corollary. 
\begin{coro}
\label{coro:epsilon_sharp_inequality}
Under the hypotheses of Proposition \ref{prop:N(A,G)_value}, for every $\epsilon>0$ there is $C_\epsilon>0$ such that the inequality 
\begin{equation*}
	\left(\int_\Omega G(\bm u)\; \d x\right)^{p/p^*}
	\leq (\mc N(A; G) + \epsilon)\|\bm u\|_{\mc W_A}^p + C_\epsilon\|\bm u\|_{L^p(\Omega; \bb R^d)}^p
\end{equation*}
holds for all $\bm u\in \mc W$. 
\end{coro}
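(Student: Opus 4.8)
The plan is to obtain the corollary as an immediate consequence of Lemma~\ref{lemma:Nbar_epsilon_sharp_inequality} together with the identification of the constant $\mc N(A; G)$ that was carried out in the course of proving Proposition~\ref{prop:N(A,G)_value}. Recall that in that proof one introduces $\overline{\mc N}(A; G) = \max\{\mc S(A(x_0); G): x_0 \in \overline\Omega\}$, observes via Lemmas~\ref{eq:constat_matrix_sobolev_constant} and~\ref{eq:constat_matrix_G_sobolev_constant} that $\overline{\mc N}(A; G) = m_A^{-\frac{p}{2n}} M_G^{p/p^*}\mc S$, and then combines Lemma~\ref{lemma:N_bar_upper_bound} (which yields $\mc N(A; G) \geq \overline{\mc N}(A; G)$) with Lemma~\ref{lemma:Nbar_epsilon_sharp_inequality} (which yields $\mc N(A; G) \leq \overline{\mc N}(A; G)$) to conclude that $\mc N(A; G) = \overline{\mc N}(A; G)$. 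The hypotheses of Proposition~\ref{prop:N(A,G)_value} are exactly those under which both the lemma and this identity hold, so I would first simply invoke the equality $\mc N(A;G) = \overline{\mc N}(A;G)$.

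Granting this equality, the proof is a one-line substitution. Fix $\epsilon > 0$. By Lemma~\ref{lemma:Nbar_epsilon_sharp_inequality} there is $C_\epsilon > 0$ such that inequality~\eqref{eq:epsilon_sharp_inequality} holds for every $\bm u \in \mc W$; that is, $\left(\int_\Omega G(\bm u)\; \d x\right)^{p/p^*} \leq (\overline{\mc N}(A; G) + \epsilon)\|\bm u\|_{\mc W_A}^p + C_\epsilon \|\bm u\|_{L^p(\Omega; \bb R^d)}^p$. Replacing $\overline{\mc N}(A; G)$ by $\mc N(A; G)$ using the identity established above gives precisely the asserted inequality, with the same constant $C_\epsilon$.

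There is no genuine obstacle here: the entire analytic content — the partition-of-unity argument, the perturbation of $A$ by its values at the centers of the covering balls, and the Cauchy-type splitting of $\lb A(x^i)\Grad(\eta_i^{1/p}u_j), \Grad(\eta_i^{1/p}u_j)\rb$ into the pieces $f_{ij}$ and $h_{ij}$ — has already been absorbed into Lemma~\ref{lemma:Nbar_epsilon_sharp_inequality}. The corollary merely restates that lemma with $\overline{\mc N}(A; G)$ rewritten as the sharp constant $\mc N(A; G)$, which is the form in which it will be applied when bounding the constrained infimum of $\Phi_{A, F}$ in Section~\ref{s:sufficient_condition_for_minimizer}.
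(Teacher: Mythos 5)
Your proposal is correct and matches the paper's own derivation: the corollary is stated immediately after the remark that $\overline{\mc N}(A;G) = \mc N(A;G)$ (established in the proof of Proposition \ref{prop:N(A,G)_value} via Lemmas \ref{lemma:N_bar_upper_bound} and \ref{lemma:Nbar_epsilon_sharp_inequality}), and follows by substituting this equality into inequality \eqref{eq:epsilon_sharp_inequality}. No further comment is needed.
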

\section{Sufficient Condition for Existence of a Minimizer}
\label{s:sufficient_condition_for_minimizer}
In this section we establish a sufficient condition for the existence of a minimizer of the functional $\Phi_{A, F}$ defined in \eqref{eq:intro_Phi_AF} subject to the constraint $\mc M$ defined in \eqref{eq:the_constraint}. Given $G$ satisfying \ref{item:G_positively_homogeneous}, it will be convenient to utilize the notation 
\begin{equation*}
	\Psi(\bm u) = \Psi_G(\bm u) = \int_\Omega G(\bm u)\; \d x. 
\end{equation*}
We define 
\begin{equation}
\label{eq:energy_quotient}
	Q_{A, F, G}(\bm u)
	= \frac{\Phi_{A, F}(\bm u)}{\Psi(\bm u)^{p/p^*}}
	\qquad \text{ for }\bm u\in \mc W\setminus \{0\} 
\end{equation}
and we observe that when $A$, $F$ and $G$ satisfy \eqref{assumptions:AFG}, Lemma \ref{lemma:Phi_AF_coercive} guarantees that $Q_{A, F, G}$ is bounded below by a positive constant. For such $A$, $F$ and $G$, the quantity
\begin{equation*}
\begin{split}
	K(A, F, G)^{-1} 
	& = \inf\{\Phi_{A, F}(\bm u): \bm u\in \mc M\}\\
	& = \inf\{Q_{A, F, G}(\bm u): \bm u\in \mc W\setminus\{0\}\}, 
\end{split}
\end{equation*}
is well-defined and positive. For ease of notation, in what follows we do not indicate the dependence on $A$, $F$ or $G$ in the notations for $\Phi$, $Q$ or $K$. For example, the notation $\Phi$ is understood to stand for $\Phi_{A, F}$. The following proposition is the main result of this section. 
\begin{prop}[Sufficient condition for minimizer]
\label{prop:sufficient_condition_for_minimizer}
Let $p\in (1, n)$, let $\Omega\subset \bb R^n$ be a bounded open set and suppose $A$, $F$ and $G$ satisfy assumptions \eqref{assumptions:AFG}. If 
\begin{equation}
\label{eq:energy_threshhold}
	\mc N(A; G)< K(A, F, G)
\end{equation} 
then there is $\bm u\in \mc M$ for which $\Phi(\bm u) = K(A, F, G)^{-1}$. 
\end{prop}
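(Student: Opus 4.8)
The plan is to run the direct method of the calculus of variations on the constrained functional $\Phi\big|_{\mc M}$, with the threshold condition \eqref{eq:energy_threshhold} serving to rule out loss of compactness. First I would take a minimizing sequence $(\bm u^k)\subset \mc M$ for $\Phi$, i.e. $\Psi(\bm u^k) = 1$ and $\Phi(\bm u^k)\to K(A,F,G)^{-1}$. By Lemma \ref{lemma:Phi_AF_coercive} the functional $\Phi$ is coercive, so $(\bm u^k)$ is bounded in $\mc W_A$, hence (after passing to a subsequence) $\bm u^k\weakconv \bm u$ weakly in $\mc W$, $\bm u^k\to \bm u$ strongly in $L^p(\Omega;\bb R^d)$ by Rellich, and $\bm u^k\to \bm u$ a.e. The goal is to show $\bm u\in \mc M$, i.e. $\Psi(\bm u)=1$; once this is known, weak lower semicontinuity of $\bm v\mapsto \|\bm v\|_{\mc W_A}^p$ together with the strong $L^p$ convergence (which handles the $\int F(\bm u)$ term, since $|F(\bm u^k)|\le M_F|\bm u^k|_p^p$ and a standard argument gives $\int_\Omega F(\bm u^k)\to \int_\Omega F(\bm u)$) yields $\Phi(\bm u)\le \liminf \Phi(\bm u^k) = K(A,F,G)^{-1}$, and since $\bm u\in\mc M$ the reverse inequality $\Phi(\bm u)\ge K(A,F,G)^{-1}$ holds by definition of $K^{-1}$ as an infimum over $\mc M$; thus $\bm u$ is the desired minimizer.

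The crux is therefore showing $\Psi(\bm u)=1$, equivalently ruling out $\Psi(\bm u)<1$ (the value cannot exceed $1$ by a Brezis--Lieb / weak lower semicontinuity consideration applied appropriately, but the real danger is concentration causing mass to escape). Here I would use a concentration-compactness / Brezis--Lieb decomposition. Set $\bm v^k = \bm u^k - \bm u$, so $\bm v^k\weakconv 0$ in $\mc W$ and $\bm v^k\to 0$ in $L^p$. The Brezis--Lieb lemma applied to the positively homogeneous degree-$p^*$ function $G$ gives $\Psi(\bm u^k) = \Psi(\bm u) + \Psi(\bm v^k) + \circ(1)$, so $\Psi(\bm v^k)\to 1 - \Psi(\bm u) =: 1-t$ with $t\in[0,1]$. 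Similarly, since $\bm v^k\to 0$ strongly in $L^p$, the lower-order term drops out and one gets $\|\bm u^k\|_{\mc W_A}^p = \|\bm u\|_{\mc W_A}^p + \|\bm v^k\|_{\mc W_A}^p + \circ(1)$ (this uses the standard weak-convergence splitting for the $p$-th power of an equivalent norm; it is the one genuinely technical point, handled via the a.e. convergence of gradients after a Minty-type argument, or more simply by the Brezis--Lieb lemma for the functional $\bm v\mapsto\int_\Omega\lb A\Grad v,\Grad v\rb^{p/2}$) and consequently $\Phi(\bm u^k) = \Phi(\bm u) + \|\bm v^k\|_{\mc W_A}^p + \circ(1)$, because $\int_\Omega F(\bm v^k)\to 0$.

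Now I would combine these. On one hand $\Phi(\bm u)\ge K^{-1}\Psi(\bm u)^{p/p^*} = K^{-1}t^{p/p^*}$ by definition of $K$ (valid whether or not $\bm u = 0$, interpreting the inequality trivially when $t=0$ since $\Phi(\bm u)\ge 0$ by coercivity). On the other hand, by Corollary \ref{coro:epsilon_sharp_inequality} with any $\epsilon>0$, $\Psi(\bm v^k)^{p/p^*}\le (\mc N(A;G)+\epsilon)\|\bm v^k\|_{\mc W_A}^p + C_\epsilon\|\bm v^k\|_{L^p}^p$, and letting $k\to\infty$ (the last term vanishes) gives $(1-t)^{p/p^*}\le (\mc N(A;G)+\epsilon)\liminf\|\bm v^k\|_{\mc W_A}^p$; since $\epsilon$ is arbitrary, $\liminf\|\bm v^k\|_{\mc W_A}^p\ge \mc N(A;G)^{-1}(1-t)^{p/p^*}$. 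Passing to the limit in $\Phi(\bm u^k) = \Phi(\bm u) + \|\bm v^k\|_{\mc W_A}^p + \circ(1)$ yields
\begin{equation*}
	K(A,F,G)^{-1}
	\ge K(A,F,G)^{-1}t^{p/p^*} + \mc N(A;G)^{-1}(1 - t)^{p/p^*}.
\end{equation*}
Writing $K^{-1} = K(A,F,G)^{-1}$ and $N^{-1} = \mc N(A;G)^{-1}$, the superadditivity inequality $1 \ge t^{p/p^*} + (1-t)^{p/p^*}$ for $t\in[0,1]$ (strict unless $t\in\{0,1\}$, since $p/p^*<1$) forces, after using $N^{-1} > K^{-1}$ from \eqref{eq:energy_threshhold},
\begin{equation*}
	K^{-1} \ge K^{-1}t^{p/p^*} + N^{-1}(1-t)^{p/p^*} \ge K^{-1}\big(t^{p/p^*} + (1-t)^{p/p^*}\big) + (N^{-1}-K^{-1})(1-t)^{p/p^*},
\end{equation*}
so $0 \ge K^{-1}\big(t^{p/p^*} + (1-t)^{p/p^*} - 1\big) + (N^{-1}-K^{-1})(1-t)^{p/p^*}$; since the first bracketed term is $\ge 0$ and $N^{-1}-K^{-1}>0$, we must have $(1-t)^{p/p^*}=0$, i.e. $t=1$, which is exactly $\Psi(\bm u)=1$. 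This closes the argument. I expect the main obstacle to be the rigorous justification of the splitting $\|\bm u^k\|_{\mc W_A}^p = \|\bm u\|_{\mc W_A}^p + \|\bm u^k - \bm u\|_{\mc W_A}^p + \circ(1)$ for the nonlinear (non-Hilbertian) $p$-energy with variable coefficients; the cleanest route is to invoke a Brezis--Lieb-type lemma for the integrand $\xi\mapsto \lb A(x)\xi,\xi\rb^{p/2}$ after extracting a.e. convergence of the gradients $\Grad u^k_j \to \Grad u_j$, the latter obtained from the standard monotonicity argument for the anisotropic $p$-Laplacian applied to the minimizing sequence.
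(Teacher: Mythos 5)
Your overall architecture (Brezis--Lieb decomposition of $\Psi$, the $\epsilon$-sharp inequality of Corollary \ref{coro:epsilon_sharp_inequality} applied to $\bm u^k-\bm u$, and a convexity/subadditivity argument to force $\Psi(\bm u)=1$) is the same as the paper's, and your endgame with $t^{p/p^*}+(1-t)^{p/p^*}\geq 1$ is a clean variant of the paper's final string of inequalities. But there is a genuine gap at exactly the point you flag as ``the one genuinely technical point'': the splitting
\begin{equation*}
	\|\bm u^k\|_{\mc W_A}^p = \|\bm u\|_{\mc W_A}^p + \|\bm u^k - \bm u\|_{\mc W_A}^p + \circ(1)
\end{equation*}
is not available for an \emph{arbitrary} minimizing sequence. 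The Brezis--Lieb lemma for the integrand $\xi\mapsto \lb A(x)\xi,\xi\rb^{p/2}$ requires a.e.\ convergence of the gradients $\Grad u_j^k\to\Grad u_j$, and weak convergence in $W_0^{1,p}$ gives no such thing: gradients of a bounded minimizing sequence may oscillate. This is not a removable technicality. For $1<p<2$ one can have $u^k\weakconv u$ with $\liminf_k\bigl(\|u^k\|_p^p-\|u^k-u\|_p^p\bigr)<\|u\|_p^p$ (take $u^k=u+r_k g$ with Rademacher-type signs $r_k$ and use $|a+b|^p+|a-b|^p\leq 2(|a|^p+|b|^p)$), so even the inequality your argument needs can fail without a.e.\ convergence. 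Nor can you retreat to the pointwise convexity inequalities of Clarkson/Simon type: they only yield $\liminf_k(\|\bm u^k\|_{\mc W_A}^p-\|\bm u\|_{\mc W_A}^p)\geq c_p\liminf_k\|\bm u^k-\bm u\|_{\mc W_A}^p$ with a constant $c_p<1$, and your final inequality $K^{-1}\geq K^{-1}t^{p/p^*}+\mc N^{-1}(1-t)^{p/p^*}$ needs the constant in front of $\|\bm u^k-\bm u\|_{\mc W_A}^p$ to be exactly $1$, since the hypothesis is only $\mc N<K$, not $c_p^{-1}\mc N<K$.

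The ``standard monotonicity argument'' you invoke to extract a.e.\ gradient convergence does not apply to a bare minimizing sequence either: it requires testing an (approximate) Euler--Lagrange equation against $u_j^k-u_j$ or a truncation thereof, and a minimizing sequence for the constrained problem satisfies no such equation a priori. This is precisely why the paper first upgrades the minimizing sequence via Ekeland's variational principle (Lemma \ref{lemma:good_minimizing_sequence}) to one satisfying $\Phi'(\bm u^k)-\frac{p}{p^*}K^{-1}\Psi'(\bm u^k)\to 0$ in $\mc W_A'$, and only then tests with $(T\circ(u_j^k-u_j))e_j$ and uses the monotonicity inequality of Lemma \ref{lemma:simon_type_inequality} together with Lemma \ref{lemma:from_deValeriola_Willem} to obtain $\Grad u_j^k\to\Grad u_j$ a.e., after which Brezis--Lieb legitimately yields the splitting. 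To repair your proof you must insert this Ekeland step (or some equivalent device producing an approximate Euler--Lagrange equation) and the truncation/monotonicity argument; with those in place, your concluding convexity computation is correct and even slightly streamlines the paper's final step, since it dispenses with the separate verification that $\bm u\not\equiv 0$.
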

The remainder of this section will be devoted to the proof of Proposition \ref{prop:sufficient_condition_for_minimizer}. This will be accomplished with the aid of a series of lemmas. The first such lemma guarantees the existence of a convenient minimizing sequence for $\Phi|_{\mc M}$. It follows from a standard application of Ekeland's variational principle. For the convenience of the reader, a proof is provided in the appendix. 
\begin{lemma}
\label{lemma:good_minimizing_sequence}
Let $p\in (1, n)$ and let $\Omega\subset \bb R^n$ be a bounded open set. If $A$, $F$ and $G$ satisfy assumptions \eqref{assumptions:AFG} then there is a minimizing sequence $(\bm u^k)_{k = 1}^\infty\subset \mc M$ for $K^{-1}$ for which 
\begin{equation}
\label{eq:some_derivatives_vanishing}
	\Phi'(\bm u^k)- \frac{p}{p^*}K^{-1}\Psi'(\bm u^k) \to 0
	\qquad \text{ in }\mc W_A'. 
\end{equation}
\end{lemma}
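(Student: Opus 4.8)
The plan is to produce a minimizing sequence $(\bm u^k)$ in $\mc M$ for $\Phi|_{\mc M}$ (equivalently, a sequence with $\Phi(\bm u^k)\to K^{-1}$ and $\Psi(\bm u^k)=1$) that additionally is an \emph{almost-critical} sequence for the constrained functional, i.e. satisfies $\Phi'(\bm u^k)-\lambda_k\Psi'(\bm u^k)\to 0$ in $\mc W_A'$, where the Lagrange multiplier $\lambda_k$ can be pinned down using the homogeneity of $\Psi$ and $\Phi$'s two homogeneous pieces. The standard device for this is Ekeland's variational principle applied on the constraint set $\mc M$. So first I would check that $\mc M$ is a complete metric space in the $\mc W_A$-metric and that $\Phi|_{\mc M}$ is lower semicontinuous and bounded below (boundedness below is immediate from the coercivity in Lemma \ref{lemma:Phi_AF_coercive} together with the Sobolev-type bound on $\Psi$, which controls $\Psi(\bm u)^{p/p^*}$ by $\|\bm u\|_{\mc W_A}^p$; lower semicontinuity follows since $\|\cdot\|_{\mc W_A}^p$ is weakly lsc and the lower-order terms are weakly continuous by Rellich). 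Then Ekeland gives, for each $\varepsilon_k\downarrow 0$, a point $\bm u^k\in\mc M$ with $\Phi(\bm u^k)\le K^{-1}+\varepsilon_k$ and $\Phi(\bm v)\ge\Phi(\bm u^k)-\varepsilon_k\|\bm v-\bm u^k\|_{\mc W_A}$ for all $\bm v\in\mc M$.

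The second step is to convert the Ekeland inequality — which only sees perturbations tangent to $\mc M$ — into the unconstrained-looking statement \eqref{eq:some_derivatives_vanishing}. For a fixed test direction $\bm\varphi\in\mc W_A$ I would consider the curve $t\mapsto \bm v_t = (\bm u^k + t\bm\varphi)/\Psi(\bm u^k+t\bm\varphi)^{1/p^*}$, which lies in $\mc M$ for small $|t|$ because $\Psi$ is positively homogeneous of degree $p^*$ and $\Psi(\bm u^k)=1$ with $\Psi$ continuous and $\Psi'(\bm u^k)\ne 0$ (the last point needs $G$ positively homogeneous, so $\Psi>0$ and $\langle\bm u^k,\Psi'(\bm u^k)\rangle=p^*\Psi(\bm u^k)=p^*\ne0$ by Lemma \ref{lemma:properties_homogeneous_functions}\ref{item:homogeneous_grad_dot}). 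Plugging $\bm v_t$ into the Ekeland inequality, dividing by $t$ and letting $t\to 0^{\pm}$ gives $|\langle\Phi'(\bm u^k),\bm\varphi\rangle-\lambda_k\langle\Psi'(\bm u^k),\bm\varphi\rangle|\le\varepsilon_k\|\bm\varphi\|_{\mc W_A}(1+o(1))$ with $\lambda_k=\tfrac{1}{p^*}\langle\Phi'(\bm u^k),\bm u^k\rangle$ coming from differentiating the normalization factor. Taking the supremum over $\|\bm\varphi\|_{\mc W_A}\le 1$ yields $\Phi'(\bm u^k)-\lambda_k\Psi'(\bm u^k)\to 0$ in $\mc W_A'$. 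It then remains to identify $\lambda_k$ with $\tfrac{p}{p^*}K^{-1}+o(1)$: pairing the almost-criticality relation against $\bm u^k$ itself and using $\langle\Phi'(\bm u^k),\bm u^k\rangle = p\|\bm u^k\|_{\mc W_A}^p - p\int_\Omega F(\bm u^k) = p\,\Phi(\bm u^k)$ (by the $p$-homogeneity of $\|\cdot\|_{\mc W_A}^p$ and of $F$, again via Lemma \ref{lemma:properties_homogeneous_functions}\ref{item:homogeneous_grad_dot}) and $\langle\Psi'(\bm u^k),\bm u^k\rangle=p^*$, we get $p\Phi(\bm u^k) - p^*\lambda_k\to 0$, hence $\lambda_k\to \tfrac{p}{p^*}K^{-1}$; absorbing the $o(1)$ discrepancy between $\lambda_k$ and $\tfrac{p}{p^*}K^{-1}$ into the error term (it multiplies the bounded-in-$\mc W_A'$ functional $\Psi'(\bm u^k)$, which is bounded because $(\bm u^k)$ is bounded in $\mc W_A$ by coercivity) gives exactly \eqref{eq:some_derivatives_vanishing}.

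The main obstacle I anticipate is a bookkeeping one rather than a conceptual one: making the curve-perturbation argument rigorous requires $C^1$ differentiability of $\Psi$ and of the normalization map near $\bm u^k$, uniform (in $k$) control of the constants $o(1)$ that appear when one expands $\Psi(\bm u^k+t\bm\varphi)^{-1/p^*}$ to first order, and the observation that $\|\Psi'(\bm u^k)\|_{\mc W_A'}$ and $\|\bm u^k\|_{\mc W_A}$ are bounded uniformly in $k$ so that the two $o(1)$ errors genuinely go to zero in $\mc W_A'$. The differentiability of $\Psi$ on $\mc W_A = W_0^{1,p}(\Omega;\bb R^d)$ with $G$ of critical growth is standard (Sobolev embedding into $L^{p^*}$ and $\Grad G$ of homogeneous degree $p^*-1$), but one should record the needed growth estimate. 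The only other thing to verify carefully is that $\Psi'(\bm u^k)\ne 0$ so the normalization is well-defined for small $t$ — this is where positive homogeneity of $G$ (assumption \ref{item:G_positively_homogeneous}) is essential, and it is worth stating explicitly. Since the excerpt says a full proof is deferred to the appendix, here I would present the argument at the level of these steps and leave the uniform-bound estimates as routine.
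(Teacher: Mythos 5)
Your argument is correct, but it implements Ekeland differently from the paper. You apply Ekeland's principle to $\Phi$ restricted to the constraint set $\mc M$ itself and then recover the statement by pushing perturbations back onto $\mc M$ via the normalized curves $t\mapsto (\bm u^k+t\bm\varphi)\Psi(\bm u^k+t\bm\varphi)^{-1/p^*}$, identifying the multiplier $\lambda_k=\tfrac1{p^*}\lb\Phi'(\bm u^k),\bm u^k\rb=\tfrac p{p^*}\Phi(\bm u^k)\to\tfrac p{p^*}K^{-1}$ through the Euler identities for the $p$- and $p^*$-homogeneous pieces; this is the classical Lagrange-multiplier route and all the ingredients you flag (differentiability of $\Psi$ on $\mc W$ with critical growth, positivity of $\Psi$ near $\bm u^k$, uniform bounds on $\|\bm u^k\|_{\mc W_A}$ by coercivity and on $\|\Psi'(\bm u^k)\|_{\mc W_A'}$ by the growth of $\Grad G$) are exactly what is needed and are routine. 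The paper instead applies Ekeland to the degree-zero homogeneous quotient $Q=\Phi/\Psi^{p/p^*}$ on the thickened set $X=\{\Psi\geq 1/4\}$: since the Ekeland points $\bm w^k$ satisfy $\Psi(\bm w^k)\in[7/8,9/8]$, they lie in the interior of $X$, so one gets $\|Q'(\bm w^k)\|_{\mc W_A'}\leq 1/k$ by free (unconstrained) differentiation with no tangency or projection argument, and then rescales $\bm u^k=\Psi(\bm w^k)^{-1/p^*}\bm w^k\in\mc M$, using the $0$-homogeneity of $Q$ and the identity $Q'(\bm u^k)=\Phi'(\bm u^k)-\tfrac p{p^*}\Phi(\bm u^k)\Psi'(\bm u^k)$ on $\mc M$. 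The trade-off: your route must make the curve-perturbation and multiplier bookkeeping rigorous (uniform first-order expansion of the normalization factor), while the paper's route must verify that the Ekeland points stay interior to $X$ and transfer the estimate under rescaling; both then need the same final bound $\sup_k\|\Psi'(\bm u^k)\|_{\mc W_A'}<\infty$ to absorb the $o(1)$ multiplier discrepancy. One minor remark: for Ekeland you only need strong lower semicontinuity of $\Phi$ on $\mc M$ (in fact $\Phi$ is norm-continuous), so your appeal to weak lower semicontinuity and Rellich is more than is required, though harmless.
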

\begin{lemma}
\label{lemma:simon_type_inequality}
Let $p\in (1, \infty)$ and let $\Omega \subset \bb R^n$ be a bounded open set. If $A$ satisfies \ref{item:A_continuous}, \ref{item:A_symmetric} and \ref{item:A_positive_definite} then there is $C>0$ such that for all $(x, \xi, \zeta)\in \overline\Omega\times \bb R^n\times \bb R^n$,
\begin{equation*}
	D_{A(x)}(\xi, \zeta)
	\geq C\begin{cases}
	|\xi - \zeta|^p & \text{ if }p\geq 2\\
	|\xi - \zeta|^2(|\xi| + |\zeta|)^{p - 2} & \text{ if }1< p \leq 2,  
	\end{cases}
\end{equation*}
where 
\begin{equation}
\label{eq:p_difference}
	D_{A(x)}(\xi, \zeta)
	= \left(\lb A(x)\xi, \xi\rb^{\frac{p - 2}2}A(x)\xi - \lb A(x)\zeta, \zeta\rb^{\frac{p - 2}{2}}A(x)\zeta\right)\cdot (\xi - \zeta).
\end{equation}
\end{lemma}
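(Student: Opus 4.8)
The plan is to reduce the matrix-weighted inequality to the classical vector inequalities for the $p$-Laplacian (the Simon/Lindqvist inequalities) by a pointwise change of variables that diagonalizes $A(x)$, together with a uniform control on the eigenvalues of $A(x)$ coming from \ref{item:A_continuous}, \ref{item:A_symmetric}, \ref{item:A_positive_definite} and \eqref{eq:A_upper_bounded}. Concretely, fix $x\in\overline\Omega$ and write $\mathsf a(\xi)=\lb A(x)\xi,\xi\rb^{1/2}$, which is a norm on $\bb R^n$ equivalent to the Euclidean norm with $\sqrt\tau\,|\xi|\le\mathsf a(\xi)\le\sqrt\Lambda\,|\xi|$ uniformly in $x$. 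In this notation $D_{A(x)}(\xi,\zeta)=\big(\mathsf a(\xi)^{p-2}A(x)\xi-\mathsf a(\zeta)^{p-2}A(x)\zeta\big)\cdot(\xi-\zeta)$.

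First I would record the scalar (one-matrix) inequalities: for the symmetric positive definite matrix $M=A(x)$ there is a constant depending only on $p$, $\tau$, $\Lambda$ (hence uniform in $x$) such that
\begin{equation*}
	\big(\mathsf a(\xi)^{p-2}M\xi-\mathsf a(\zeta)^{p-2}M\zeta\big)\cdot(\xi-\zeta)
	\geq C\begin{cases}
	|\xi-\zeta|^p & p\geq 2,\\
	|\xi-\zeta|^2(|\xi|+|\zeta|)^{p-2} & 1<p\leq 2.
	\end{cases}
\end{equation*}
To get this, substitute $\eta=M^{1/2}\xi$, $\sigma=M^{1/2}\zeta$; since $\mathsf a(\xi)=|M^{1/2}\xi|=|\eta|$ and $M\xi\cdot(\xi-\zeta)=M^{1/2}\xi\cdot M^{1/2}(\xi-\zeta)=\eta\cdot(\eta-\sigma)$, the left side becomes exactly $\big(|\eta|^{p-2}\eta-|\sigma|^{p-2}\sigma\big)\cdot(\eta-\sigma)$, the classical $p$-Laplacian form. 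The well-known Lindqvist/Simon inequalities bound this below by $c_p|\eta-\sigma|^p$ for $p\geq 2$ and by $c_p|\eta-\sigma|^2(|\eta|+|\sigma|)^{p-2}$ for $1<p\leq2$. Finally undo the substitution using $|\eta-\sigma|=|M^{1/2}(\xi-\zeta)|\geq\sqrt\tau\,|\xi-\zeta|$ in the first case, and $|\eta-\sigma|\geq\sqrt\tau\,|\xi-\zeta|$ together with $|\eta|+|\sigma|\leq\sqrt\Lambda(|\xi|+|\zeta|)$ (valid since $p-2\leq0$, so the larger factor makes the bound smaller) in the second case. This yields the displayed pointwise estimate with $C$ depending only on $p,\tau,\Lambda$, and \eqref{eq:A_upper_bounded} guarantees such a finite $\Lambda$ exists because $\Omega$ is bounded and $A$ continuous.

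The main obstacle — and the only genuinely non-routine point — is that $D_{A(x)}$ involves the norm $\mathsf a$ of the \emph{argument} rather than the Euclidean norm, so it is not literally the Euclidean $p$-Laplacian difference and the textbook inequalities do not apply verbatim; the change of variables $\eta=A(x)^{1/2}\xi$ is precisely what resolves this, since it simultaneously linearizes the quadratic form and turns $\mathsf a$ into $|\cdot|$. One should double-check that $A(x)^{1/2}$ is well-defined and symmetric positive definite (true by \ref{item:A_symmetric} and \ref{item:A_positive_definite}) and that the bounds $\sqrt\tau\le\|A(x)^{1/2}\|,\ \|A(x)^{-1/2}\|\le 1/\sqrt\tau$, $\|A(x)^{1/2}\|\le\sqrt\Lambda$ hold uniformly, which they do. After that the argument is a direct citation of the elementary $L^p$ monotonicity inequalities and the uniformity of $C$ follows by taking the worst constant over the compact range of eigenvalues $[\tau,\Lambda]$.
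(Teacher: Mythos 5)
Your proposal is correct and follows essentially the same route as the paper: both reduce to the Euclidean case via the symmetric square root $B(x)=A(x)^{1/2}$ (so that $D_{A(x)}(\xi,\zeta)=D_I(B(x)\xi,B(x)\zeta)$), invoke the classical $p$-Laplacian monotonicity inequalities (the paper cites \cite{Cheng1998}), and conclude with the uniform eigenvalue bounds $\tau|\xi|^2\leq|B(x)\xi|^2\leq\Lambda|\xi|^2$, noting as you do that $p-2\leq 0$ makes the upper bound $\Lambda$ the right one for the factor $(|\xi|+|\zeta|)^{p-2}$ when $1<p\leq 2$.
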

\begin{proof}
The result is known in the case $A\equiv I$, see for example Theorem 1 and Theorem 2 of \cite{Cheng1998}. We proceed to reduce the case $A\not\equiv I$ to the case $A \equiv I$. By \ref{item:A_symmetric} and \ref{item:A_positive_definite}, for each $x\in \overline\Omega$ there is an orthogonal matrix $O(x)$ such that
\begin{equation*}
	O(x)A(x)O(x)^\top = \diag(a_1(x), \ldots, a_n(x))=:\delta(x), 
\end{equation*}
where $0< \tau\leq a_1(x)\leq \ldots \leq a_n(x)$ are the eigenvalues of $A(x)$. Setting $B(x) = O(x)^\top\delta(x)^{1/2}O(x)$ we have both $B(x)^\top = B(x)$ and $A(x) = B(x)^\top B(x)$ for all $x$. Therefore, 
\begin{equation}
\label{eq:D_A(xi,zeta)_initial_estimate}
\begin{split}
	\lefteqn{D_{A(x)}(\xi, \zeta)}\\
	& = D_I(B(x)\xi, B(x)\zeta)\\
	& \geq C\begin{cases}
	|B(x)\xi - B(x)\zeta|^p & \text{ if }p\geq 2\\
	|B(x)\xi - B(x)\zeta|^2(|B(x)\xi| + |B(x)\zeta|)^{p - 2} & \text{ if }1< p\leq 2, 
	\end{cases}
\end{split}
\end{equation}
where the final estimate follows from the fact that the Lemma is known in the case $A\equiv I$. For any $(x, \xi)\in \overline \Omega \times \bb R^n$ we have $|B(x)\xi|^2= \lb A(x)\xi,\xi\rb$ and thus $\tau|\xi|^2 \leq |B(x)\xi|^2 \leq \Lambda|\xi|^2$, where $\Lambda$ is as in \eqref{eq:A_upper_bounded}. Using this in \eqref{eq:D_A(xi,zeta)_initial_estimate} gives the asserted inequality. 
\end{proof}
The following lemma is modified from Theorem 1.1. of \cite{deValeriolaWillem2009}. To state the lemma we define $T:\bb R\to \bb R$ by 
\begin{equation}
\label{eq:T}
	T(r)
	= \begin{cases}
	r & \text{ if }|r|\leq 1\\
	\frac r{|r|} & \text{ if }|r| >1. 
	\end{cases}	
\end{equation}
\begin{lemma}
\label{lemma:from_deValeriola_Willem}
Let $p>1$, let $\Omega\subset \bb R^n$ be bounded and open and suppose $A$ satisfies \ref{item:A_continuous}, \ref{item:A_symmetric} and \ref{item:A_positive_definite}. If $(v^k)_{k = 1}^\infty \subset W_0^{1,p }(\Omega)$ with $v^k\weakconv v$ weakly in $W_0^{1, p}(\Omega)$ and 
\begin{equation}
\label{eq:de_valeriola_willem_convergence_assumption}
	\int_\Omega \left(
	\lb A(x)\Grad v^k, \Grad v^k\rb^{\frac{p - 2}2}A(x)\Grad v^k - \lb A(x)\Grad v,  \Grad v\rb^{\frac{p - 2}2}A(x)\Grad v\right)\cdot \Grad (T\circ (v^k- v))\; \d x
	\to 0
\end{equation}
then there is a subsequence along which $|\Grad v^k -\Grad v|\to 0$ a.e. in $\Omega$.  
\end{lemma}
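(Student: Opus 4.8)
The plan is to follow the standard Brezis--Lieb / almost-everywhere-convergence-of-gradients strategy, adapted to the anisotropic operator via Lemma~\ref{lemma:simon_type_inequality}. First I would set $w^k = v^k - v$, so that $w^k \weakconv 0$ in $W_0^{1,p}(\Omega)$ and, by Rellich, $w^k \to 0$ strongly in $L^p(\Omega)$ and a.e.\ in $\Omega$ along a subsequence. The truncation $T$ in \eqref{eq:T} is what makes this work: $T\circ w^k$ is bounded in $L^\infty$ and bounded in $W_0^{1,p}$, and since $w^k \to 0$ a.e., we have $T\circ w^k \to 0$ a.e.\ and hence $T\circ w^k \weakconv 0$ in $W_0^{1,p}(\Omega)$ (and strongly in $L^q$ for $q<\infty$). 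The role of $T$ is that $\Grad(T\circ w^k) = \chi_{\{|w^k|\le 1\}}\Grad w^k$ a.e., so the integrand in \eqref{eq:de_valeriola_willem_convergence_assumption} is supported where $|w^k|\le 1$ and is pointwise nonnegative after one further manipulation (see below).

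The key algebraic step is to rewrite the integrand so that monotonicity becomes visible. On the set $\{|w^k|\le 1\}$ one has $\Grad(T\circ w^k) = \Grad v^k - \Grad v$, so the integrand there equals exactly $D_{A(x)}(\Grad v^k, \Grad v)$ in the notation of \eqref{eq:p_difference}. The terms coming from the set $\{|w^k|>1\}$ must be controlled: there $\Grad(T\circ w^k)=0$ pointwise, so actually the integrand in \eqref{eq:de_valeriola_willem_convergence_assumption} is identically $\chi_{\{|w^k|\le1\}}D_{A(x)}(\Grad v^k,\Grad v)$, which by Lemma~\ref{lemma:simon_type_inequality} is $\ge 0$. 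Thus hypothesis \eqref{eq:de_valeriola_willem_convergence_assumption} says precisely that
\begin{equation*}
	\int_{\{|v^k - v|\le 1\}} D_{A(x)}(\Grad v^k, \Grad v)\; \d x \to 0.
\end{equation*}
Now apply the lower bound from Lemma~\ref{lemma:simon_type_inequality}. For $p\ge 2$ this immediately gives $\int_{\{|w^k|\le 1\}}|\Grad v^k - \Grad v|^p\,\d x\to 0$. Since $|\{|w^k|>1\}|\to 0$ (by $L^p$ convergence and Chebyshev) and $\Grad v^k - \Grad v$ is bounded in $L^p$, a uniform-integrability / Vitali argument upgrades this to $\Grad v^k - \Grad v\to 0$ in $L^p(\Omega)$, hence a.e.\ along a further subsequence.

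For $1<p\le 2$ the lower bound is the weighted quantity $|\Grad v^k-\Grad v|^2(|\Grad v^k|+|\Grad v|)^{p-2}$, and this is the step I expect to be the main obstacle, since the weight degenerates where $|\Grad v^k|$ is large. The standard fix is a Hölder trick: write
\begin{equation*}
	|\Grad v^k - \Grad v|^p
	= \left(|\Grad v^k - \Grad v|^2(|\Grad v^k|+|\Grad v|)^{p-2}\right)^{p/2}(|\Grad v^k|+|\Grad v|)^{p(2-p)/2},
\end{equation*}
integrate over $\{|w^k|\le 1\}$, and apply Hölder with exponents $2/p$ and $2/(2-p)$; the first factor tends to $0$ by the displayed limit above and the second is bounded since $\Grad v^k$ is bounded in $L^p$. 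This yields $\int_{\{|w^k|\le 1\}}|\Grad v^k-\Grad v|^p\,\d x\to 0$, and then the same Vitali/measure-of-bad-set argument as in the $p\ge 2$ case produces a subsequence along which $|\Grad v^k - \Grad v|\to 0$ a.e.\ in $\Omega$. Throughout, the uniform ellipticity constants $\tau$ and $\Lambda$ from \ref{item:A_positive_definite} and \eqref{eq:A_upper_bounded} enter only through the constant $C$ in Lemma~\ref{lemma:simon_type_inequality} and through the equivalence of $\|\cdot\|_{W^{1,p}_{0,A}}$ with the standard norm, so no further structure on $A$ is needed.
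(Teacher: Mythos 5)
Your reduction of the hypothesis is correct and coincides with the paper's: since $\Grad(T\circ(v^k-v))=\chi_{E_k}\Grad(v^k-v)$ a.e.\ with $E_k=\{|v^k-v|\le 1\}$, assumption \eqref{eq:de_valeriola_willem_convergence_assumption} says exactly that $\int_{E_k}D_{A(x)}(\Grad v^k,\Grad v)\,\d x\to 0$, the integrand being nonnegative by Lemma \ref{lemma:simon_type_inequality}; your H\"older trick for $1<p\le 2$ then correctly yields $\int_{E_k}|\Grad v^k-\Grad v|^p\,\d x\to 0$ in both cases. The genuine gap is the final step: the claim that $|\{|v^k-v|>1\}|\to 0$ together with $L^p$-boundedness of the gradients ``upgrades'' this to $\Grad v^k\to\Grad v$ strongly in $L^p(\Omega)$. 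Boundedness of $(\Grad v^k)$ in $L^p$ does not give uniform integrability of $|\Grad(v^k-v)|^p$, so the contribution of the bad set cannot be dismissed by a Vitali argument; the gradient mass can concentrate precisely on $\{|v^k-v|>1\}$. Concretely, take $v=0$ and $v^k=w_{\epsilon_k}$ a cut-off Aubin--Talenti bubble as in \eqref{eq:w_epsilon} concentrating at a point: then $v^k\weakconv 0$ in $W_0^{1,p}$, the set $\{v^k\le 1\}$ carries a vanishing fraction of $\|\Grad v^k\|_{L^p}^p$ as $\epsilon_k\to 0$, so \eqref{eq:de_valeriola_willem_convergence_assumption} holds, and yet $\|\Grad v^k\|_{L^p}$ stays bounded away from zero, so there is no strong $L^p$ convergence. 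This is not a cosmetic issue in context: the lemma is invoked in the proof of Proposition \ref{prop:sufficient_condition_for_minimizer} before concentration has been excluded, so any argument routed through strong convergence of gradients proves something that is false at that stage.

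The repair is immediate and is essentially what the paper does. From $\int_{E_k}|\Grad v^k-\Grad v|^p\,\d x\to 0$, the nonnegative functions $\chi_{E_k}|\Grad v^k-\Grad v|^p$ tend to $0$ in $L^1(\Omega)$, hence a.e.\ along a subsequence; since Rellich gives $v^k\to v$ a.e.\ and therefore $\chi_{E_k}\to 1$ a.e., you conclude $|\Grad v^k-\Grad v|\to 0$ a.e.\ along that subsequence, with no control of the bad set required. (The paper argues even more directly and purely pointwise: it extracts a subsequence along which $\chi_{E_k}D_{A(x)}(\Grad v^k,\Grad v)\to 0$ a.e., notes $\chi_{E_k}\to 1$ a.e., and then applies the lower bound of Lemma \ref{lemma:simon_type_inequality} pointwise; this also bypasses your H\"older step in the case $1<p\le 2$.)
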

\begin{proof}
Rellich's Theorem guarantees the existence of a subsequence of $(v^k)_{k = 1}^\infty$ along which $v^k\to v$ a.e. in $\Omega$. Defining 
\begin{equation*}
	E_k = \{x\in \Omega : |v^k - v|\leq 1\}
\end{equation*}
and
\begin{equation*}
	e_k(x) 
	= D_{A(x)}(\Grad v^k, \Grad v), 
\end{equation*}	
where $D_{A(x)}(\xi, \zeta)$ is as in \eqref{eq:p_difference}, assumption \eqref{eq:de_valeriola_willem_convergence_assumption} gives $\int_{E_k}e_k\; \d x\to 0$. Moreover, Lemma \ref{lemma:simon_type_inequality} guarantees that $e_k\chi_{E_k}\geq 0$, so there is a subsequence along which $e_k\chi_{E_k}\to 0$ a.e. in $\Omega$. Since, in addition, $v^k\to v$ a.e. in $\Omega$ we have $\chi_{E_k}\to 1$ a.e. in $\Omega$. We therefore deduce that $e_k\to 0$ a.e. in $\Omega$. Applying Lemma \ref{lemma:simon_type_inequality} we find that along such a subsequence, $|\Grad v^k - \Grad v|\to 0$ a.e. in $\Omega$. 
\end{proof}
\begin{proof}[Proof of Proposition \ref{prop:sufficient_condition_for_minimizer}]
Lemma \ref{lemma:good_minimizing_sequence} ensures the existence of a sequence $(\bm u^k)_{k = 1}^\infty\subset \mc M$ for which both $\Phi(\bm u^k)\to K^{-1}$ and  \eqref{eq:some_derivatives_vanishing} holds, so we fix such a sequence. Lemma \ref{lemma:Phi_AF_coercive} ensures that $\Phi$ is coercive, so $(\bm u^k)_{k = 1}^\infty$ is bounded in $\mc W$. The reflexivity of $\mc W$ and the compactness of the embedding $\mc W\hookrightarrow L^p(\Omega; \bb R^d)$ guarantees the existence of $\bm u\in \mc W$ and a subsequence of $(\bm u^k)_{k = 1}^\infty$ along which 
\begin{equation*}
\begin{split}
	\bm u^k\weakconv \bm u& \text{ weakly in }\mc W\\
	\bm u^k\to \bm u & \text{ in }L^p(\Omega; \bb R^d)\\
	\bm u^k\to \bm u & \text{ a.e. in }\Omega. 
\end{split}
\end{equation*}
The continuity of $F$ and the convergence $\bm u^k\to \bm u$ a.e. in $\Omega$ ensures that $F(\bm u^k)\to F(\bm u)$ a.e. in $\Omega$. Moreover, Lemma \ref{lemma:properties_homogeneous_functions} \ref{item:homogeneous_upper_bound} gives $|F(\bm u^k)|\leq M_F |\bm u^k|_p^p$, so from the convergence $\bm u^k\to \bm u$ in $L^p(\Omega; \bb R^d)$ and the generalized Dominated Convergence Theorem we obtain $\int_\Omega F(\bm u^k)\; \d x\to \int_\Omega F(\bm u)\; \d x$. Now, from the definition of $K$, the assumption $K> \mc N$ ensures the existence of $C>0$ such that the estimate
\begin{equation*}
	1
	= \left(\int_\Omega G(\bm u^k)\; \d x\right)^{p/p^*}
	\leq \frac{K + \mc N}{2}\|\bm u^k\|_{\mc W_A}^p + C\|\bm u^k\|_{L^p(\Omega; \bb R^d)}^p
\end{equation*}
holds for all $k$. For any such $C$ we have
\begin{equation*}
\begin{split}
	K^{-1} + \circ(1)
	& = \Phi(\bm u^k)\\
	& = \|\bm u^k\|_{\mc W_A}^p - \int_\Omega F(\bm u)\; \d x + \circ(1)\\
	& \geq \frac 2{K + \mc N}\left(1  - C\|\bm u^k\|_{L^p(\Omega; \bb R^d)}^p\right) - \int_\Omega F(\bm u)\; \d x + \circ(1). 
\end{split}
\end{equation*}
Letting $k\to \infty$ in this estimate gives
\begin{equation*}
\begin{split}
	0
	& < \frac{2}{K + \mc N} - \frac 1 K\\
	& \leq \frac{2C}{K + \mc N}\|\bm u\|_p^p + \int_\Omega F(\bm u)\; \d x\\
	&\leq \frac{2C}{K + \mc N}\|\bm u\|_p^p + M_F\|\bm u\|_p^p, 
\end{split}
\end{equation*}
so $\bm u\not\equiv 0$. We proceed to show that $\bm u\in \mc M$. From the continuity of $G$, the a.e. convergence $\bm u^k \to \bm u$ and the assumption $(\bm u^k)_{k = 1}^\infty\subset \mc M$, Fatou's lemma gives
\begin{equation}
\label{eq:applied_fatous}
	\int_\Omega G(\bm u)\; \d x 
	\leq \liminf_k\int_\Omega G(\bm u^k)\; \d x
	= 1, 
\end{equation}
so we only need to show that $\int_\Omega G(\bm u)\; \d x\geq 1$. Since \eqref{eq:some_derivatives_vanishing} holds, for any $\bm \varphi \in \mc W$ we have
\begin{equation*}
\begin{split}
	\circ(1)\|\bm \varphi\|_{\mc W_A}
	& = \lb \Phi'(\bm u^k), \bm \varphi\rb - \frac{p}{p^*}K^{-1}\lb \Psi'(\bm u^k), \bm \varphi\rb\\
	& = p\sum_{j = 1}^d\int_\Omega \lb A(x)\Grad u_j^k, \Grad u_j^k\rb^{\frac{p - 2}{2}}\lb A(x)\Grad u_j^k, \Grad \varphi_j\rb\; \d x\\
	& - \int_\Omega \lb\Grad F(\bm u^k), \bm \varphi\rb - \frac{p}{p^*K}\int_\Omega \lb \Grad G(\bm u^k), \bm \varphi\rb\; \d x.
\end{split}
\end{equation*}
Using this equality with 
\begin{equation}
\label{eq:phi1_test_function}
	\bm \varphi = (T\circ(u_1^k - u_1), 0, \ldots, 0), 
\end{equation}
where $T$ is defined in \eqref{eq:T}, and using the notation 
\begin{equation*}
	h(u) = \lb A(x)\Grad u, \Grad u\rb^{(p -2)/2}A(x)\Grad u
\end{equation*}
for $u\in W_0^{1, p}(\Omega)$, we obtain
\begin{equation}
\label{eq:drink_espresso}
\begin{split}
	p\int_\Omega
	& (h(u_1^k) - h(u_1))\cdot \Grad (T\circ (u_1^k - u_1))\; \d x\\
	& = \circ(1)\|T\circ(u_1^k - u_1)\|_{W_{0, A}^{1, p}} + \int_\Omega \partial_1F(\bm u^k)T(u_1^k - u_1)\; \d x\\
	& + \frac{p}{p^*K}\int_\Omega \partial_1G(\bm u^k)T(u_1^k - u_1)\; \d x\\
	& - p\int_\Omega h(u_1)\cdot \Grad (T\circ(u_1^k - u_1))\; \d x.  
\end{split}
\end{equation}
For any $r\in [1, \infty)$, a routine argument involving the Dominated Convergence Theorem shows that $\|T(u_1^k - u_1)\|_{L^r(\Omega)}\to 0$. Therefore, using the degree-$(p- 1)$ homogenity of $\partial_1F$ and the fact that $(\bm u^k)_{k = 1}^\infty$ is bounded in $L^p(\Omega; \bb R^d)$ we obtain 
\begin{equation}
\label{eq:partial1_F_term}
\begin{split}
	\abs{\int_\Omega \partial_1F(\bm u^k)T(u_1^k - u_1)\; \d x}
	& \leq \int_\Omega |\partial_1F(\bm u^k)||T(u_1^k - u_1)|\; \d x\\
	& \leq \max_{x\in \bb S_p^{d - 1}}|\partial_1 F(s)|\int_\Omega |\bm u^k|_p^{p-1}|T(u_1^k - u_1)|\; \d x\\
	& \leq \max_{x\in \bb S_p^{d - 1}}|\partial_1 F(s)|\|\bm u^k\|_{L^p(\Omega; \bb R^d)}^{p - 1}\|T(u_1^k - u_1)\|_{L^p(\Omega)}\\
	& \to 0. 
\end{split}
\end{equation}
Similarly, using the degree-$(p^* - 1)$ homogenity of $\partial_1G$ and the fact that $T(u_1^k - u_1)\to 0$ in $L^{p^*}$ we obtain 
\begin{equation}
\label{eq:partial1_G_term}
\begin{split}
	\abs{\int_\Omega\partial_1 G(\bm u^k)T(u_1^k - u_1)\; \d x}
	& \to 0. 
\end{split}
\end{equation}
From the $\mc W_A$-weak convergence $\bm u^k \weakconv \bm u$ and since $T'(u_1^k - u_1) = 0$ on $\{|u_1^k - u_1|\geq 1\}$ we have
\begin{equation}
\label{eq:cruz_baby}
\begin{split}
	\circ(1)
	& = \int_\Omega\lb A(x)\Grad u_1, \Grad u_1\rb^{(p - 2)/2}\lb A(x)\Grad u_1, \Grad (T\circ(u_1^k - u_1))\rb\; \d x\\
	& + \int_{\{|u_1^k - u_1|\geq 1\}}\lb A(x)\Grad u_1, \Grad u_1\rb^{(p - 2)/2}\lb A(x)\Grad u_1, \Grad(u_1^k - u_1)\rb\; \d x.  
\end{split}
\end{equation}
Moreover, since $|\Grad u_1^k|$ is bounded in $L^p(\Omega)$ and since the $L^p$-convergence $u_1^k\to u_1$ guarantees that $|\{|u_1^k - u_1|\geq 1\}|\to 0$ , the last term in \eqref{eq:cruz_baby} satisfies
\begin{equation*}
\begin{split}
	\lefteqn{\abs{\int_{\{|u_1^k - u_1|\geq 1\}}\lb A(x)\Grad u_1, \Grad u_1\rb^{(p - 2)/2}\lb A(x)\Grad u_1, \Grad(u_1^k - u_1)\rb\; \d x}}\\
	& \leq C\int_{\{|u_1^k - u_1|\geq 1\}}|\Grad u_1|^{p - 1}|\Grad(u_1^k - u_1)|\; \d x\\
	& \leq C\|\Grad u_1\|_{L^p(\{|u_1^k - u_1|\geq 1\})}^{p - 1}\|\Grad(u_1^k - u_1)\|_{L^p(\Omega)}\\
	& \to 0 
\end{split}
\end{equation*}
for some $C = C(\tau, \Lambda)>0$. In particular, \eqref{eq:cruz_baby} implies that 
\begin{equation}
\label{eq:weak_conv_term}
	\int_\Omega \lb A(x)\Grad u_1, \Grad u_1\rb^{(p - 2)/2}\lb A(x)\Grad u_1, \Grad(T\circ(u_1^k - u_1))\rb\; \d x
	= \circ(1). 
\end{equation}
Using \eqref{eq:partial1_F_term}, \eqref{eq:partial1_G_term} and \eqref{eq:weak_conv_term} together with the fact that $(\|T\circ(u_1^k - u_1)\|_{W_{0, A}^{1, p}})_{k = 1}^\infty$ is bounded, \eqref{eq:drink_espresso} yields
\begin{equation*}
	\int_\Omega (h(u_1^k) - h(u_1))\cdot \Grad (T\circ(u_1^k - u_1))\; \d x
	= \circ(1). 
\end{equation*}
From this estimate and from Lemma \ref{lemma:from_deValeriola_Willem} we find that there is a subsequence along which $\Grad u_1^k\to \Grad u_1$ a.e. in $\Omega$. For each $j= 2, \ldots, d$, the same argument, but using $\bm \varphi = (0, \ldots, 0, T\circ(u_j^k - u_j), 0, \ldots, 0)$ in place of the test function in \eqref{eq:phi1_test_function} guarantees the existence of a subsequence of $\bm u^k$ along which $\Grad u_j^k\to \Grad u_j$ a.e. in $\Omega$. We now select a subsequence such that this holds simultaneously for all $j$. The Brezis-Lieb lemma \cite{BrezisLieb1983} gives
\begin{equation*}
\begin{split}
	\int_\Omega & \lb A(x)\Grad u_j^k, \Grad u_j^k\rb^{p/2}\; \d x\\
	& = \int_\Omega \lb A(x)\Grad u_j, \Grad u_j\rb^{p/2}\; \d x
	+ \int_\Omega \lb A(x)\Grad (u_j^k - u_j), \Grad (u_j^k - u_j)\rb^{p/2}\; \d x
	+ \circ(1)
\end{split}
\end{equation*}
for all $j \in \{1, \ldots, d\}$. Summing this equality over $j$ gives
\begin{equation*}
	\|\bm u^k\|_{\mc W_A}^p 
	= \|\bm u\|_{\mc W_A}^p + \|\bm u^k - \bm u\|_{\mc W_A}^p + \circ(1)
\end{equation*}
and therefore, by the assumption $\Phi(\bm u^k)\to K^{-1}$, we have
\begin{equation}
\label{eq:will_use_later}
\begin{split}
	K^{-1} + \circ(1)
	& = \Phi(\bm u^k)\\
	& = \|\bm u^k\|_{\mc W_A}^p - \int_\Omega F(\bm u) + \circ(1)\\
	& = \Phi(\bm u) + \|\bm u^k - \bm u\|_{\mc W_A}^p + \circ(1)\\
	& \geq K^{-1}\left(\int_\Omega G(\bm u)\; \d x\right)^{p/p^*} + \|\bm u^k - \bm u\|_{\mc W_A}^p + \circ(1).
\end{split}
\end{equation}
For any $\epsilon>0$, applying the $\epsilon$-sharp inequality in Corollary \ref{coro:epsilon_sharp_inequality} to the second term on the right-most side of \eqref{eq:will_use_later} we obtain 
\begin{equation*}
\begin{split}
	K^{-1} &  + \circ(1) \\
	& \geq K^{-1}\left(\int_\Omega G(\bm u)\; \d x\right)^{p/p^*} + (\mc N + \epsilon)^{-1}\left(\int_\Omega G(\bm u^k - \bm u)\; \d x\right)^{p/p^*}\\
	&  - C_\epsilon(\mc N + \epsilon)^{-1}\|\bm u^k- \bm u\|_{L^p(\Omega; \bb R^d)}^p + \circ(1)\\
	& = K^{-1}\left(\int_\Omega G(\bm u)\; \d x\right)^{p/p^*} + (\mc N + \epsilon)^{-1}\left(\int_\Omega G(\bm u^k - \bm u)\; \d x\right)^{p/p^*} +\circ(1). 
\end{split}
\end{equation*}
Rearranging this inequality and applying the Brezis-Lieb type lemma to $\int_\Omega G(\cdot)\; \d x$, see Lemma 2.1 of \cite{Amster2002}, gives
\begin{equation*}
\begin{split}
	K^{-1}\left(1 - \left(\int_\Omega G(\bm u)\; \d x\right)^{p/p^*}\right)
	& \geq \frac{1}{\mc N + \epsilon}\left(\int_\Omega G(\bm u^k - \bm u)\; \d x\right)^{p/p^*} + \circ(1)\\
	& = \frac{1}{\mc N + \epsilon}\left(\int_\Omega G(\bm u^k)\; \d x - \int_\Omega G(\bm u)\; \d x\right)^{p/p^*} + \circ(1)\\
	& = \frac{1}{\mc N + \epsilon}\left(1 - \int_\Omega G(\bm u)\; \d x\right)^{p/p^*} + \circ(1)\\
	& \geq \frac{1}{\mc N + \epsilon}\left(1 - \left(\int_\Omega G(\bm u)\; \d x\right)^{p/p^*}\right) + \circ(1). 
\end{split}
\end{equation*}
Letting $k\to \infty$ we find that for all $\epsilon>0$, 
\begin{equation*}
	0\leq \left(1 - \left(\int_\Omega G(\bm u)\; \d x\right)^{p/p^*}\right)\left(K^{-1} - (\mc N + \epsilon)^{-1}\right). 
\end{equation*}
Choosing $\epsilon>0$ small enough so that $\mc N + \epsilon < K$ yields $\int_\Omega G(\bm u)\; \d x \geq 1$ which, when combined with estimate \eqref{eq:applied_fatous} gives $\bm u\in \mc M$. Using this in estimate \eqref{eq:will_use_later} we obtain $\|\bm u^k- \bm u\|_{\mc W_A}\to 0$, so the continuity of $\Phi$ gives $\Phi(\bm u) = K^{-1}$.
\end{proof}
Up to a positive constant multiple, any minimizer of $\Phi_{A, F}$ subject to the constraint $\bm u\in \mc M$ is a nontrivial weak solution to \eqref{eq:main_problem}, so we obtain the following corollary. 
\begin{coro}
\label{coro:minimizing_solution}
Under the hypotheses of Proposition \ref{prop:sufficient_condition_for_minimizer} there is a nontrivial solution to problem \eqref{eq:main_problem}. 
\end{coro}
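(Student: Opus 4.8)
The plan is to invoke Proposition~\ref{prop:sufficient_condition_for_minimizer} to obtain a constrained minimizer $\bm u$, to apply the Lagrange multiplier rule on the manifold $\mc M$, to pin down the multiplier via Euler's identity for homogeneous functions, and finally to rescale $\bm u$ so that the multiplier is absorbed into the equation. To begin, Proposition~\ref{prop:sufficient_condition_for_minimizer} provides $\bm u\in\mc M$ with $\Phi(\bm u)=K^{-1}$; since $G(0)=0$ by Lemma~\ref{lemma:properties_homogeneous_functions} while $\int_\Omega G(\bm u)\;\d x=1$, this $\bm u$ is nontrivial, and recall from the discussion preceding Proposition~\ref{prop:sufficient_condition_for_minimizer} that $0<K^{-1}<\infty$. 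The functionals $\bm u\mapsto\|\bm u\|_{\mc W_A}^p$, $\bm u\mapsto\int_\Omega F(\bm u)\;\d x$ and $\Psi$ are of class $C^1$ on $\mc W$: the first is the functional whose differential is $-L_{A,p}$ (as noted in Section~\ref{s:preliminaries}) and is $C^1$ for $p>1$, while for the other two it follows from the growth bounds $|\Grad F(s)|\leq C|s|_p^{p-1}$ and $|\Grad G(s)|\leq C|s|_p^{p^*-1}$ (consequences of Lemma~\ref{lemma:properties_homogeneous_functions}~\ref{item:derivatives_homogeneous} and \ref{item:homogeneous_upper_bound}) together with the continuity of the associated Nemytskii operators $L^p(\Omega;\bb R^d)\to L^{p'}(\Omega;\bb R^d)$ and $L^{p^*}(\Omega;\bb R^d)\to L^{(p^*)'}(\Omega;\bb R^d)$ and the embedding $\mc W\hookrightarrow L^{p^*}(\Omega;\bb R^d)$.

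Since $\lb\Psi'(\bm u),\bm u\rb=\int_\Omega\lb\Grad G(\bm u),\bm u\rb\;\d x=p^*\int_\Omega G(\bm u)\;\d x=p^*\neq0$ by Lemma~\ref{lemma:properties_homogeneous_functions}~\ref{item:homogeneous_grad_dot}, the constraint set $\mc M=\Psi^{-1}(\{1\})$ is a $C^1$ Banach submanifold of $\mc W$ near $\bm u$, so the Lagrange multiplier rule furnishes $\mu\in\bb R$ with $\Phi'(\bm u)=\mu\,\Psi'(\bm u)$ in $\mc W_A'$. Testing this identity against $\bm u$ itself and using the homogeneity relations $\lb\Grad F(s),s\rb=pF(s)$ and $\lb\Grad G(s),s\rb=p^*G(s)$, one computes $\lb\Phi'(\bm u),\bm u\rb=p\|\bm u\|_{\mc W_A}^p-p\int_\Omega F(\bm u)\;\d x=p\,\Phi(\bm u)=pK^{-1}$ and $\lb\Psi'(\bm u),\bm u\rb=p^*$, whence $\mu=\tfrac{p}{p^*}K^{-1}$. (This is the same multiplier that appears in Lemma~\ref{lemma:good_minimizing_sequence}; alternatively, one may arrive at $\Phi'(\bm u)=\tfrac p{p^*}K^{-1}\Psi'(\bm u)$ by passing to the limit in \eqref{eq:some_derivatives_vanishing} along the strongly convergent minimizing sequence constructed in the proof of Proposition~\ref{prop:sufficient_condition_for_minimizer}.) Writing $f=\tfrac1p\Grad F$ and $g=\tfrac1{p^*}\Grad G$ and dividing through by $p$, this says precisely that $\bm u$ is a weak solution of $-L_{A,p}\bm u=f(\bm u)+K^{-1}g(\bm u)$.

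It remains to rescale. Put $t=K^{-1/(p^*-p)}\in(0,\infty)$ and $\bm v=t\bm u$. The operator $L_{A,p}$ and the nonlinearity $f$ are positively homogeneous of degree $p-1$ (for $f$, by Lemma~\ref{lemma:properties_homogeneous_functions}~\ref{item:derivatives_homogeneous}) while $g$ is positively homogeneous of degree $p^*-1$, so $-L_{A,p}\bm v=t^{p-1}\big(f(\bm u)+K^{-1}g(\bm u)\big)=f(\bm v)+K^{-1}t^{p-p^*}g(\bm v)=f(\bm v)+g(\bm v)$, the last equality holding by the choice $K^{-1}t^{p-p^*}=1$. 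Since $\bm v$ is a nonzero scalar multiple of $\bm u$, it is a nontrivial weak solution of \eqref{eq:main_problem}, as claimed. The only point requiring genuine care is the $C^1$-regularity of $\Phi$ and $\Psi$ on $\mc W$ — in particular, the continuity of the critical-growth Nemytskii operator associated with $\Grad G$ under the embedding $\mc W\hookrightarrow L^{p^*}$; once this is granted, the identification of the multiplier and the rescaling are routine homogeneity bookkeeping.
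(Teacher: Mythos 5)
Your proposal is correct and is exactly the argument the paper compresses into the sentence preceding the corollary: a constrained minimizer on $\mc M$ satisfies the Euler--Lagrange equation with a Lagrange multiplier, the multiplier is identified as $\tfrac{p}{p^*}K^{-1}$ via the homogeneity relations $\lb s,\Grad F(s)\rb = pF(s)$ and $\lb s,\Grad G(s)\rb = p^*G(s)$, and the positive rescaling $\bm v = K^{-1/(p^*-p)}\bm u$ absorbs it using the degree-$(p-1)$ homogeneity of $L_{A,p}$ and $f$ and the degree-$(p^*-1)$ homogeneity of $g$. The added care you take with the $C^1$-regularity of $\Phi$ and $\Psi$ and the nondegeneracy $\lb\Psi'(\bm u),\bm u\rb = p^*\neq 0$ is sound and consistent with the paper's framework.
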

\section{Proofs of Existence Theorems}
\label{s:proofs_of_existence}
In this section we give proofs of Theorems \ref{theorem:existence_gamma_small1}, \ref{theorem:existence_gamma_large} and \ref{theorem:boundary_minimizer_existence}. In each of the proofs we verify that the energy smallness condition in \eqref{eq:energy_threshhold} is satisfied by establishing the existence of $\bm u\in \mc M$ for which $\Phi(\bm u)< \mc N(A; G)^{-1}$. 
\subsection{Construction of Test Function for $\gamma\leq p$}
This subsection is devoted to the proof of Theorem \ref{theorem:existence_gamma_small1}. In the proof we will employ the Hardy-Sobolev inequality which we recall now. For $p\in (1, n)$, for a bounded open set $\Omega \subset \bb R^n$, for $\gamma\in (0, p]$ and for $x_0\in \Omega$, the Hardy-Sobolev inequality ensures the existence of an optimal positive constant $K_0(n, p, \gamma,\Omega, x_0)$ such that
\begin{equation}
\label{eq:hardy_sobolev}
	\int_\Omega |u|^p\; \d x 
	\leq K_0(n, p, \gamma, \Omega, x_0)^p\int_\Omega |x - x_0|^\gamma|\Grad u|^p\; \d x
	\qquad \text{ for all }u\in W_0^{1, p}(\Omega). 
\end{equation}
This inequality is a particular case of the Caffarelli-Kohn-Nirenberg inequalities \cite{CaffarelliKohnNirenberg1984}. When $\gamma = p$, the constant $K_0$ on the right-hand side of \eqref{eq:hardy_sobolev} is independent of $x_0$ and $\Omega$. More specifically, $K_0(n,p, p, \Omega, x_0) = p/n$, see \cite{HardyLittlewoodPolya1934} for the case $p = 2$ and \cite{AbdellaouiColoradoPeral2005} for the case $p\neq 2$. 

\begin{proof}[Proof of Theorem \ref{theorem:existence_gamma_small1}]
For simplicity we assume $x_0 = 0\in\Omega$, in which case assumption \eqref{eq:A_lower_expansion} becomes
\begin{equation}
\label{eq:A_lower_expansion_zero}
	\lb A(x)\xi, \xi\rb^{p/2}
	\geq \lb A(0)\xi, \xi\rb^{p/2} + C_0|x |^\gamma |\xi|^p
	\qquad\text{ for all }(x, \xi)\in \Omega\times \bb R^n.   
\end{equation}
Using this estimate together with the sharp Hardy-Sobolev inequality \eqref{eq:hardy_sobolev} we have for any $\bm u\in \mc W$, 
\begin{equation}
\label{eq:Theta_not_empty}
\begin{split}
	\|\bm u\|_{\mc W_A}^p
	& \geq \|\bm u\|_{\mc W_{A(0)}}^p + C_0\sum_{j = 1}^d\int_\Omega |x|^\gamma |\Grad u_j|^p\; \d x\\
	& \geq \|\bm u\|_{\mc W_{A(0)}}^p + \frac{C_0}{K_0^p}\|\bm u\|_{L^p(\Omega; \bb R^d)}^p\\
	& \geq \mc S(A(0); G)^{-1}\left(\int_\Omega G(\bm u)\; \d x\right)^{p/p^*} + \frac{C_0}{K_0^p}\|\bm u\|_{L^p(\Omega; \bb R^d)}^p, 
\end{split}
\end{equation}
the final inequality holding by the definition of $\mc S(A(0); G)^{-1}$ given in equation \eqref{eq:MG_sobolev_constant}. Define $\Theta\subset \bb R$ to be the set of $\lambda>0$ for which the inequality 
\begin{equation*}
	\|\bm u\|_{\mc W_A}^p
	\geq \mc S(A(0); G)^{-1}\left(\int_\Omega G(\bm u)\; \d x\right)^{p/p^*} + \lambda\|\bm u\|_{L^p(\Omega; \bb R^d)}^p
\end{equation*}
holds for all $\bm u\in \mc W$. Estimate \eqref{eq:Theta_not_empty} shows that $\Theta\neq \emptyset$. It is routine to verify that $\Theta$ is bounded above and that 
\begin{equation*}
	\lambda^*:= \sup\Theta
\end{equation*}
is in $\Theta$. Moreover, the following simple argument shows that $\lambda^*< \lambda_1(A, p)$. Let $s\in \bb S_p^{d - 1}$ satisfy $G(s) = M_G = \max\{G(t): t\in \bb S_p^{d - 1}\}$ and set $\bm u = (\varphi s_1, \ldots, \varphi s_d)$, where $\varphi\in W_0^{1, p}$ is an eigenfunction corresponding to $\lambda_1(A, p)$. For this $\bm u$ we have 
\begin{equation*}
\begin{split}
	\lambda_1(A, p)\|\varphi\|_p^p
	& = \|\bm u\|_{\mc W_A}^p\\
	& \geq \mc S(A(0); G)^{-1}\left(\int_\Omega G(\bm u)\; \d x\right)^{p/p^*} + \lambda^*\|\bm u\|_{L^p(\Omega; \bb R^d)}^p\\
	& = \mc S(A(0); G)^{-1}\left(\int_\Omega G(\bm u)\; \d x\right)^{p/p^*} + \lambda^*\|\varphi\|_{L^p(\Omega)}^p\\
	& > \lambda^*\|\varphi\|_{L^p(\Omega)}^p 
\end{split}
\end{equation*}
thereby establishing the inequality $\lambda^*< \lambda_1(A, p)$. From the definition of $\lambda^*$, if $\mu_F> \lambda^*$ then there is $\bm u\in \mc W$ for which 
\begin{equation}
\label{eq:inequality_violated}
\begin{split}
	\Phi(\bm u)
	& \leq \|\bm u\|_{\mc W_A}^p - \mu_F\|\bm u\|_{L^p(\Omega; \bb R^d)}^p\\
	& < \mc S(A(0); G)^{-1}\left(\int_\Omega G(\bm u)\; \d x\right)^{p/p^*}. 
\end{split}
\end{equation}
We fix any such $\bm u$. Lemma \ref{lemma:Phi_AF_coercive} ensures that $\Phi$ is coercive, so the strict inequality in \eqref{eq:inequality_violated} guarantees that $\int_\Omega G(\bm u)\; \d x> 0$. Defining $\bm v = \left(\int_\Omega G(\bm u)\; \d x \right)^{-1/p^*}\bm u$ we have $\int_\Omega G(\bm v)\; \d x = 1$ and, from \eqref{eq:inequality_violated} and the degree-$p$ homogeneity of $\Phi$, 
\begin{equation}
\label{eq:v_energy_small}
	\Phi(\bm v)< \mc S(A(0); G)^{-1}.
\end{equation} 
Since $x_0 = 0$ is a minimizer of $\det A$, combining Proposition \ref{prop:N(A,G)_value}, Lemma \ref{eq:constat_matrix_sobolev_constant} and Lemma \ref{eq:constat_matrix_G_sobolev_constant} yields $\mc N(A; G) =\mc S(A(0); G)$. Therefore, combining inequality \eqref{eq:v_energy_small} and Corollary \ref{coro:minimizing_solution} guarantees the existence of  a nontrivial solution to problem \eqref{eq:main_problem}. 
\end{proof}
%
\subsection{Construction of Test Function for $\gamma>p$}
This subsection is devoted to the proofs of Theorems \ref{theorem:existence_gamma_large} and \ref{theorem:boundary_minimizer_existence}. In each of these proofs we verify the energy smallness condition in \eqref{eq:energy_threshhold} by using a standard bubble-based construction to establish the existence of $\bm u\in \mc M$ for which $\Phi(\bm u)< \mc N(A; G)^{-1}$. 
\begin{proof}[Proof of Theorem \ref{theorem:existence_gamma_large}]
We assume for simplicity that $x_0 = 0\in \Omega$ in which case condition \eqref{eq:A_upper_expansion} reads
\begin{equation}
\label{eq:A_upper_expansion_origin}
	\lb A(x)\xi, \xi\rb^{p/2}
	\leq \lb A(0)\xi, \xi\rb^{p/2} + C_0|x|^\gamma|\xi|^p
\end{equation}
for all $(x, \xi)\in B(0, \delta)\times \bb R^n$. By assumptions \ref{item:A_symmetric} and \ref{item:A_positive_definite} there is an orthogonal matrix $O\in O(n)$ for which 
\begin{equation}
\label{eq:diagonalize_A(0)}
	OA(0)O^\top 
	= \diag( a_1, \ldots, a_n)
	=:D, 
\end{equation}
where $0< a_1\leq \ldots \leq a_n$ are the eigenvalues of $A(0)$. Set $P = D^{-1/2}O$ so that $PA(0)P^\top = I_n$. We also have both $a_1|y|^2\leq |P^{-1}y|^2 \leq a_n|y|^2$ for all $y\in \bb R^n$ and $|P^\top\xi|^2\leq a_1^{-1}|\xi|^2$ for all $\xi\in \bb R^n$. For any $\bm u\in \mc W$ with $\supp \bm u\subset B(0, \delta)$ we set 
\begin{equation}
\label{eq:uv_relation}
	\bm v(y) = \bm u(P^{-1}y)
	\qquad \text{ for }y\in P\Omega = \{Px: x\in\Omega\}. 
\end{equation}
Using the change of variable $y = Px$ and since $(\det P)^{-1} = \pm m_A^{1/2}$ (recall the notational convention in \eqref{eq:mA_notation}) we obtain 
\begin{equation}
\label{eq:u_WA_norm_initial_estimate}
\begin{split}
	\|\bm u\|_{\mc W_A}^p
	& \leq \sum_{j = 1}^d \int_\Omega\left(\lb A(0)\Grad u_j, \Grad u_j\rb^{p/2} + C_0|x|^\gamma |\Grad u_j|^p\right)\; \d x\\
	& \leq m_A^{1/2}\sum_{j =1 }^d\int_{P\Omega}\left(1 +C_1|y|^\gamma\right)|\Grad v_j|^p\; \d y, 
\end{split}
\end{equation}
where $C_1 = C_0 a_n^{\gamma/2}a_1^{-p/2}$. Still for $\bm u\in \mc W$ satisfying $\supp \bm u\subset B(0, \delta)$, using the same change of variable gives both of the following relations: 
\begin{equation*}
\begin{split}
	\int_\Omega F(\bm u)\; \d x 
	& = m_A^{1/2}\int_{P\Omega}F(\bm v)\; \d y\\
	\int_\Omega G(\bm u)\; \d x
	& = m_A^{1/2}\int_{P\Omega}G(\bm v)\; \d y.
\end{split}
\end{equation*}
Combining these equalities with estimate \eqref{eq:u_WA_norm_initial_estimate} we find that
\begin{equation}
\label{eq:Qu_first_pass}
\begin{split}
	Q(\bm u)
	& \leq m_A^{\frac{p}{2n}}\frac{\sum_{j = 1}^d\int_{P\Omega}\left(1 + C_1|y|^\gamma\right)|\Grad v_j|^p\; \d y - \int_{P\Omega} F(\bm v)\; \d y}{\left(\int_{P\Omega}G(\bm v)\; \d y\right)^{p/p^*}}
\end{split}
\end{equation}
whenever $\bm u$ and $\bm v$ are related via \eqref{eq:uv_relation}. If, in addition to $\bm u$ and $\bm v$ satisfying this relation, $\bm v = (s_1w, \ldots, s_d w)$ for some nonnegative function $w\in W_0^{1, p}(P\Omega)\setminus\{0\}$ having support contained in $PB(0, \delta)$ and some $s \in \bb S_p^{d- 1}$ satisfying both conditions in \eqref{eq:good_theta}, then \eqref{eq:Qu_first_pass} gives
\begin{equation}
\label{eq:quotient_on_vector_multiple}
	m_A^{-p/(2n)}M_G^{p/p^*}Q(\bm u)
	\leq \frac{\int_{P\Omega}(1 + C_1|y|^\gamma)|\Grad w|^p\; \d y - F(s)\|w\|_{L^p(P\Omega)}^p}{\|w\|_{L^{p^*}(P\Omega)}^p}. 
\end{equation}
Now, let $\eta\in C_c^\infty(PB(0, \delta))$ with $\eta \equiv 1$ in a neighborhood of the origin and, for $\epsilon>0$ small, set 
\begin{equation}
\label{eq:w_epsilon}
	w_\epsilon(y) = \eta(y)U_\epsilon(y),
\end{equation}
where $U_\epsilon = U_{\epsilon, 0}$ as given in \eqref{eq:shifted_scaled_bubble} and \eqref{eq:normalized_bubble} with $x_0 = 0$. Recalling that these functions are normalized so that $\|U_\epsilon\|_{L^{p^*}(\bb R^n)} = 1$ for all $\epsilon>0$, we have the following estimates for $w_\epsilon$, see \cite{GarciaPeral1987}: 
\begin{equation}
\label{eq:cutoff_bubble_grad_pnorm_pstar_norm}
\begin{split}
	\|\Grad w_\epsilon\|_p^p & = \mc S^{-1} + O\left(\epsilon^{\frac{n - p}{p - 1}}\right)\\
	\|w_\epsilon\|_{p^*}^p & = 1 + O(\epsilon^n)\\
\end{split}
\end{equation}
and
\begin{equation}
\label{eq:cutoff_bubble_pnorm}
	\|w_\epsilon\|_p^p 
	= \begin{cases}
	b_{n,p}\epsilon^p + O\left(\epsilon^{\frac{n - p}{p - 1}}\right)& \text{ if }n>p^2\\
	b_{n,p}\epsilon^p|\log\epsilon| + O(\epsilon^p) & \text{ if }n = p^2, 
	\end{cases}
\end{equation}
where 
\begin{equation}
\label{eq:bnp}
	b_{n,p}
	= \begin{cases}
	c_{n,p}^p\omega_{n - 1}\frac{(p - 1)\Gamma\left(\frac{n - p^2}p\right)\Gamma\left(\frac{np - n}p\right)}{p\Gamma(n -p)} & \text{ if }n > p^2\\
	c_{n,p}^p\omega_{n - 1} & \text{ if }n = p^2, 
	\end{cases}
\end{equation}
$\omega_{n - 1}= |\bb S^{n - 1}|$ is the volume of the $(n - 1)$-sphere, and $\mc S  = \mc S(n, p)$ is the sharp Sobolev constant defined in \eqref{eq:classical_sobolev_constant}. Since $\gamma> p$ we have 
\begin{equation}
\label{eq:cutoff_bubble_gamma_grad_pnrom}
\begin{split}
	\int_{\bb R^n}|y|^\gamma|\Grad w_\epsilon|^p\; \d y
 	& = \begin{cases}
	O(\epsilon^\gamma) & \text{ if }\gamma < \frac{n - p}{p - 1}\\
	O(\epsilon^\gamma|\log \epsilon|) & \text{ if } \gamma = \frac{n - p}{p - 1}\\
	O(\epsilon^{\frac{n -p}{p - 1}}) &\text{ if } \gamma> \frac{n - p}{p - 1}. 
	\end{cases}\\
	& = \begin{cases}
	\circ(\epsilon^p) & \text{ if }n>p^2\\
	O(\epsilon^p) & \text{ if }n = p^2. 
	\end{cases}
\end{split}
\end{equation}
In particular, comparing \eqref{eq:cutoff_bubble_gamma_grad_pnrom} to \eqref{eq:cutoff_bubble_pnorm} and in view of the assumption $\gamma>p$ we find that  
\begin{equation*}
	\int_{\bb R^n}|y|^\gamma|\Grad w_\epsilon|^p \; \d y
	= \circ(1)\|w_\epsilon\|_p^p
\end{equation*}
independently of whether $n>p^2$ or $n = p^2$. For $s \in \bb S_p^{d - 1}$ satisfying both conditions in \eqref{eq:good_theta} set $\bm v^\epsilon(y)= (w_\epsilon s_1, \ldots, w_\epsilon s_d)$ and set $\bm u^\epsilon(x) = \bm v^\epsilon(Px)$. Choosing $\bm u = \bm u^\epsilon$ in \eqref{eq:quotient_on_vector_multiple} gives
\begin{equation}
\label{eq:Qu_epsilon_estimate}
	m_A^{-\frac{p}{2n}}M_G^{p/p^*}Q(\bm u^\epsilon)
	\leq \frac{\int_{P\Omega}\left(1 + C_1|y|^\gamma\right)|\Grad w_\epsilon|^p\; \d y - F(s)\|w_\epsilon\|_{L^p(P\Omega)}^p}{\|w_\epsilon\|_{L^{p^*}(P\Omega)}^p}.
\end{equation}
If $n> p^2$ then combining estimates \eqref{eq:cutoff_bubble_grad_pnorm_pstar_norm}, \eqref{eq:cutoff_bubble_pnorm} and \eqref{eq:cutoff_bubble_gamma_grad_pnrom} with estimate \eqref{eq:Qu_epsilon_estimate} gives
\begin{equation}
\label{eq:n>p2_smallness_verified}
\begin{split}
	m_A^{-\frac{p}{2n}}M_G^{p/p^*}Q(\bm u^\epsilon)
	& \leq \frac{\mc S^{-1} - F(s)b_{n, p}\epsilon^p + \circ(\epsilon^p)}{1 + O(\epsilon^n)}\\
	& = \mc S^{-1} - F(s)b_{n, p}\epsilon^p + \circ(\epsilon^p)\\
	& < \mc S^{-1}, 
\end{split}
\end{equation} 
the final inequality holding for $\epsilon>0$ sufficiently small. In the case $n = p^2$ a similar computation shows that for $\epsilon>0$ sufficiently small, 
\begin{equation}
\label{eq:n=p2_smallness_verified}
\begin{split}
	m_A^{-\frac{p}{2n}}M_G^{p/p^*}Q(\bm u^\epsilon)
	& \leq \frac{\mc S^{-1} - F(s)b_{n, p}\epsilon^p|\log \epsilon| + O(\epsilon^p)}{1 + O(\epsilon^n)}\\
	& = \mc S^{-1} - F(s) b_{n, p}\epsilon^p|\log\epsilon| + O(\epsilon^p)\\
	& < \mc S^{-1}. 
\end{split}
\end{equation} 
When combined with Proposition \ref{prop:N(A,G)_value}, estimates \eqref{eq:n>p2_smallness_verified} and \eqref{eq:n=p2_smallness_verified} show that $\mc N< K$ whenever $n\geq p^2$. Corollary \ref{coro:minimizing_solution} now ensures that problem \eqref{eq:main_problem} has a nontrivial weak solution. 
\end{proof}
\begin{proof}[Proof of Theorem \ref{theorem:boundary_minimizer_existence}]
As in the proof of Theorem \ref{theorem:existence_gamma_large}, we assume for simplicity that $x_0 = 0$ and we define $P = D^{-1/2}O$, where $O\in O(n)$ and $D$ are as in \eqref{eq:diagonalize_A(0)}. If $0\leq w\in W_0^{1, p}(P\Omega)\setminus\{0\}$ is a nonnegative function with support contained in $P(B(0, \delta)\cap \Omega)$ and if $s\in \bb S_p^{d - 1}$ satisfies both conditions in \eqref{eq:good_theta} then for $\bm u(x) = (s_1w(Px), \ldots, s_dw(Px))$, we still have estimate \eqref{eq:quotient_on_vector_multiple}. It is routine to check that the interior $\theta$-singularity assumption on $\bdy\Omega$ at $x_0 = 0$ guarantees the interior $\theta$-singularity of $\bdy P\Omega$ at $y_0 = Px_0 = 0$. Indeed, if $r>0$ and if $(x_i)_{i = 1}^\infty\subset \Omega$ with $x_i\to 0$ and $B(x_i, r|x_i|^\theta)\subset\Omega$, then for $y_i = Px_i$ we have both $|y_i|\to 0$ and $B(y_i, r'|y_i|^\theta)\subset P\Omega$, where $r' = a_n^{-1/2}a_1^{\theta/2}r$. For $x_i$ and $y_i$ as such, let $\eta\in C_c^\infty(\bb R^n)$ satisfy $0\leq \eta\leq 1$, $\eta\equiv 1$ in $B(0, r'/2)$ and $\eta\equiv 0$ on $\bb R^n\setminus B(0, r')$ and consider the test function 
\begin{equation*}
	\varphi^i(y)
	= \eta\left(\frac{y - y_i}{\epsilon_i^\theta}\right)U_{\epsilon_i^\sigma}(y - y_i), 
\end{equation*}
where $\epsilon_i = |y_i|$, where $U_\epsilon = U_{\epsilon, 0}$ is given in \eqref{eq:shifted_scaled_bubble} and \eqref{eq:normalized_bubble} with $x_0 = 0$ and where $\sigma$ satisfies
\begin{equation}
\label{eq:sigma}
	\max\left\{\theta p^*, \frac{\theta p(n -p)}{n - p^2}\right\}
	< p\sigma 
	< \gamma. 
\end{equation}
In particular $\sigma> \theta$. With $w_\epsilon$ as in \eqref{eq:w_epsilon} we have
\begin{equation*}
	\varphi^i(y) = \epsilon_i^{-\theta(n -p)/p}w_{\epsilon_i^{\sigma - \theta}}\left(\frac{ y - y_i}{\epsilon_i^\theta}\right),
\end{equation*}
so estimates \eqref{eq:cutoff_bubble_grad_pnorm_pstar_norm} and \eqref{eq:cutoff_bubble_pnorm} yield the following estimates 
\begin{equation}
\label{eq:phii_norm_estimates}
\begin{split}
	\|\Grad \varphi^i\|_{L^p(\bb R^n)}^p 
	& = \|\Grad w_{\epsilon_i^{\sigma - \theta}}\|_{L^p(\bb R^n)}^p = \mc S^{-1} + O(\epsilon_i^{(\sigma - \theta)(n -p)/(p - 1)})\\
	\|\varphi^i\|_{L^{p^*}(\bb R^n)}^p 
	& = \|w_{\epsilon_i^{\sigma - \theta}}\|_{L^{p^*}(\bb R^n)}^p = 1 + O(\epsilon_i^{n(\sigma - \theta)})\\
	\|\varphi^i\|_{L^p(\bb R^n)}^p 
	& = \epsilon_i^{p\theta}\|w_{\epsilon_i^{\sigma - \theta}}\|_{L^p(\bb R^n)}^p = 
	b_{n, p}\epsilon_i^{p\sigma} + O(\epsilon_i^{p\theta + (\sigma - \theta)(n - p)/(p - 1)}),
\end{split}
\end{equation} 
where $b_{n,p}$ is as in \eqref{eq:bnp}.
Moreover, since $\supp\varphi^i\subset B(y^i, r'\epsilon_i^\theta)$ and since $\theta\geq 1$, for any $y\in \supp\varphi^i$ we have
\begin{equation*}
	|y|\leq |y - y_i| +|y_i|\leq r'\epsilon_i^\theta + \epsilon_i\leq C\epsilon_i. 
\end{equation*}
Therefore, the first estimate in \eqref{eq:phii_norm_estimates} gives
\begin{equation}
\label{eq:gamma_grad}
	\int_{\bb R^n}|y|^\gamma |\Grad \varphi^i|^p\; \d y
	\leq C\epsilon_i^\gamma\|\Grad \varphi^i\|_{L^p(\bb R^n)}^p
	= O(\epsilon_i^\gamma). 
\end{equation}
For any small $r>0$, by choosing $i$ sufficiently large we have $\supp(\varphi^i\circ P)\subset B(0, \delta)$, where $\delta>0$ is as in the hypotheses of the theorem, so choosing $s\in \bb S_p^{d -1}$ for which both of conditions \eqref{eq:good_theta} hold, and setting $\bm u^i(x) = (s_1\varphi^i(Px), \ldots, s_d\varphi^i(Px))$, we may apply estimate \eqref{eq:quotient_on_vector_multiple} to obtain 
\begin{equation*}
\begin{split}
	m_A^{-p/(2n)}& M_G^{p/p^*}Q(\bm u^i)\\
	& = \frac{\int_{P\Omega}(1 + C_1|y|^\gamma)|\Grad \varphi^i|^p\; \d y - F(s)\|\varphi^i\|_{L^p(\bb R^n)}^p}{\|\varphi^i\|_{L^{p^*}(\bb R^n)}^p}\\
	& = \frac{\mc S^{-1} - F(s)b_{n,p}\epsilon^{p\sigma} + O(\epsilon^{(\sigma - \theta)(n - p)/(p - 1)}) + O(\epsilon_i^{\gamma})}{1 + O(\epsilon_i^{n(\sigma - \theta)})}\\
	& = \mc S^{-1} - F(s)b_{n,p}\epsilon^{p\sigma} + O(\epsilon^{(\sigma - \theta)(n - p)/(p - 1)}) + O(\epsilon_i^{\gamma}) + O(\epsilon_i^{n(\sigma - \theta)}), 
\end{split}
\end{equation*}
where the second equality is obtained with the aid of estimates \eqref{eq:phii_norm_estimates} and \eqref{eq:gamma_grad}. Since $\sigma$ satisfes \eqref{eq:sigma}, we find that $p\sigma< \min\{\gamma, \frac{(\sigma -\theta)(n - p)}{p - 1}, n(\sigma - \theta)\}$, so for $i$ sufficiently large we obtain 
\begin{equation*}
	Q(\bm u^i)
	< m_A^{p/(2n)}M_G^{-p/p^*}\mc S^{-1}
	= \mc N^{-1}, 
\end{equation*}
the equality holding by Proposition \ref{prop:N(A,G)_value}. Finally, Corollary \ref{coro:minimizing_solution} now implies that problem \eqref{eq:main_problem} admits a nontrivial solution. 
\end{proof}
\section{Proof of the Non-Existence Result}
\label{s:non_existence}
For $C^2$ solutions to problem \eqref{eq:potential_form_system} below, the following Pohozaev identity follows from the general variational identity of Pucci and Serrin \cite{PucciSerrin1986}. Because of the strict convexity of the map $\xi\mapsto \mc F(z, x, \xi) = \frac 1p\lb A(x)\xi, \xi\rb^{p/2} - H(z)$ the results of \cite{Degiovanni2003} guarantee that the identity also holds under relaxed smoothness assumptions. 
\begin{lemma}
\label{lemma:pohozaev_general}
Let $p\in (1, n)$, let $\Omega\subset \bb R^n$ be a bounded domain with $C^1$ boundary and let $x_0\in \overline\Omega$. Suppose $A\in C^1(\overline \Omega\setminus\{x_0\})$ satisfies \ref{item:A_symmetric}, \ref{item:A_positive_definite} and that $b_{ij}(x) = (x - x^0)\cdot \Grad a_{ij}(x)$ extends continuously to $x_0$ for all $i,j$. If $\bm u\in \mc W\cap C^1(\overline\Omega; \bb R^d)$ is a weak solution to 
\begin{equation}
\label{eq:potential_form_system}
\begin{cases}
	-L_{A, p}\bm u = h(\bm u) & \text{ in }\Omega\\
	\bm u = 0 & \text{ on }\bdy \Omega
\end{cases}
\end{equation}
where $h = \Grad H$ for some $H\in C^1(\bb R^d)$ satisfying $H(0) = 0$ then 
\begin{equation}
\label{eq:general_pohozaev}
\begin{split}
	np\int_\Omega H(\bm u)\; \d x&  - (n - p)\int_\Omega h(\bm u)\cdot \bm u\; \d x\\
	& = (p - 1)\int_{\bdy\Omega}|\Grad \bm u|_p^p\lb A(x)\nu, \nu\rb^{p/2}\lb x - x_0, \nu\rb \; \d S_x\\
	&  + \frac p 2\sum_{j = 1}^d\int_\Omega\lb A(x)\Grad u_j, \Grad u_j\rb^{(p - 2)/2}\lb B(x)\Grad u_j, \Grad u_j\rb\; \d x, 
\end{split}
\end{equation}
where $B(x) = (b_{ij}(x))$. 
\end{lemma}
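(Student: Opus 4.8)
The plan is to obtain \eqref{eq:general_pohozaev} as the Pohozaev identity attached to \eqref{eq:potential_form_system}: test the $j$-th equation against the multiplier $\varphi_j = \lb x - x_0,\Grad u_j\rb$, sum over $j \in \{1,\dots,d\}$, and integrate by parts. Since $\varphi_j$ carries a first derivative of $u_j$, the manipulations that follow are literally valid for $\bm u \in C^2(\overline\Omega;\bb R^d)$, in which case the resulting identity is the special case of the Pucci--Serrin variational identity \cite{PucciSerrin1986} corresponding to the Lagrangian $\mc F(z,x,\xi) = \tfrac1p\lb A(x)\xi,\xi\rb^{p/2} - H(z)$; for a weak solution of class $C^1$ the same identity remains valid by \cite{Degiovanni2003}, the operative hypothesis being the strict convexity of $\xi \mapsto \tfrac1p\lb A(x)\xi,\xi\rb^{p/2}$, which uses \ref{item:A_symmetric} and \ref{item:A_positive_definite}. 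So the first step is this reduction, after which it remains to evaluate, term by term, the identity produced by the multiplier.

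For the differential-operator side, abbreviate $\mc A(x,\xi) = \lb A(x)\xi,\xi\rb^{(p-2)/2}A(x)\xi$, so the equation reads $\div\mc A(x,\Grad u_j) = -h_j(\bm u)$. Integrating $\int_\Omega (\div\mc A(x,\Grad u_j))\,\varphi_j\,\d x$ by parts produces a $\bdy\Omega$-integral of $(\mc A(x,\Grad u_j)\cdot\nu)\,\varphi_j$ and the bulk term $-\int_\Omega \mc A(x,\Grad u_j)\cdot\Grad\varphi_j\,\d x$. I would expand $\Grad\varphi_j = \Grad u_j + (D^2 u_j)(x-x_0)$: the $\Grad u_j$ part of the bulk term is $-\int_\Omega\lb A(x)\Grad u_j,\Grad u_j\rb^{p/2}\,\d x$, while for the Hessian part I would use that $\mc A(x,\Grad u_j)\cdot(D^2u_j)(x-x_0)$ equals $\lb x-x_0,\Grad G_j\rb$ with $G_j = \tfrac1p\lb A(x)\Grad u_j,\Grad u_j\rb^{p/2}$, minus the correction $\tfrac12\lb A(x)\Grad u_j,\Grad u_j\rb^{(p-2)/2}\lb B(x)\Grad u_j,\Grad u_j\rb$ coming from differentiating the $x$-dependence of $A$ — the matrix $B(x) = (b_{ij}(x))$ entering exactly because $\sum_k (x_k - x_{0,k})\,\partial_k a_{i\ell}(x) = b_{i\ell}(x)$. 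Applying the divergence theorem to $\lb x-x_0,\Grad G_j\rb = \div((x-x_0)G_j) - nG_j$ yields a $\bdy\Omega$-integral of $\tfrac1p\lb A(x)\Grad u_j,\Grad u_j\rb^{p/2}\lb x-x_0,\nu\rb$ together with $+\tfrac np\int_\Omega\lb A(x)\Grad u_j,\Grad u_j\rb^{p/2}\,\d x$; combined with the $\Grad u_j$ part this leaves the coefficient $\tfrac{n-p}p$ on the kinetic integral and isolates the $B(x)$-term exactly as it appears on the right-hand side of \eqref{eq:general_pohozaev}.

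For the nonlinearity side, the chain rule gives $\sum_j h_j(\bm u)\,\varphi_j = \lb x-x_0,\Grad(H(\bm u))\rb = \div\!\big((x-x_0)H(\bm u)\big) - nH(\bm u)$, so the divergence theorem together with $H(0) = 0$ and $\bm u|_{\bdy\Omega} = 0$ turns $\sum_j\int_\Omega h_j(\bm u)\varphi_j\,\d x$ into $-n\int_\Omega H(\bm u)\,\d x$. Then, since $u_j = 0$ on $\bdy\Omega$ forces $\Grad u_j = (\partial_\nu u_j)\nu$ there, both boundary integrals reduce to multiples of $|\partial_\nu u_j|^p\lb A(x)\nu,\nu\rb^{p/2}\lb x-x_0,\nu\rb$, and after summing over $j$ (using $\sum_j|\partial_\nu u_j|^p = |\Grad\bm u|_p^p$ on $\bdy\Omega$) they collapse to $\tfrac{p-1}p\int_{\bdy\Omega}|\Grad\bm u|_p^p\lb A(x)\nu,\nu\rb^{p/2}\lb x-x_0,\nu\rb\,\d S$. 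Collecting all contributions, multiplying through by $p$, and finally substituting the Dirichlet-energy identity $\sum_j\int_\Omega\lb A(x)\Grad u_j,\Grad u_j\rb^{p/2}\,\d x = \int_\Omega h(\bm u)\cdot\bm u\,\d x$ — obtained by testing \eqref{eq:potential_form_system} with $\bm u$ itself — converts the $(n-p)$-multiple of the kinetic integral into $(n-p)\int_\Omega h(\bm u)\cdot\bm u\,\d x$ and yields \eqref{eq:general_pohozaev}.

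The main obstacle is not the algebra but the low regularity: for $\bm u$ merely $C^1$, the multiplier $\lb x-x_0,\Grad u_j\rb$ is not known to belong to $W_0^{1,p}(\Omega)$ and $D^2u_j$ need not exist, so the integration by parts above is only formal; this is precisely why I would invoke \cite{PucciSerrin1986} together with its relaxed-regularity refinement \cite{Degiovanni2003} rather than proceed directly. A secondary technical point is that $A$ is assumed $C^1$ only on $\overline\Omega\setminus\{x_0\}$: I would carry out the computation on $\Omega\setminus\overline{B(x_0,\varepsilon)}$ and let $\varepsilon\downarrow 0$, noting that the $B(x)$-integral is absolutely convergent — indeed bounded, since $\bm u\in C^1$ and the $b_{ij}$ extend continuously to $x_0$ — and that the extra inner-boundary terms carry a factor $|x-x_0| = \varepsilon$ and hence vanish in the limit.
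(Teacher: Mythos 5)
Your proposal is correct and follows the same route the paper takes: the paper's entire proof of Lemma \ref{lemma:pohozaev_general} is the appeal to the Pucci--Serrin variational identity \cite{PucciSerrin1986} for the Lagrangian $\mc F(z,x,\xi) = \tfrac1p\lb A(x)\xi,\xi\rb^{p/2} - H(z)$, with \cite{Degiovanni2003} (via strict convexity in $\xi$) handling $C^1$ weak solutions, exactly as you do. Your additional spelled-out multiplier computation (testing against $\lb x-x_0,\Grad u_j\rb$, the emergence of $B(x)$, the collapse of boundary terms using $\Grad u_j=(\partial_\nu u_j)\nu$, and the final substitution of $\sum_j\|u_j\|_{W_{0,A}^{1,p}}^p=\int_\Omega h(\bm u)\cdot\bm u\,\d x$) correctly reproduces the specific form \eqref{eq:general_pohozaev}, which the paper leaves implicit.
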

\begin{coro}
Under the hypotheses of Lemma \ref{eq:potential_form_system} with the additional assumption that $H(s) = \frac 1p F(s) + \frac{1}{p^*}G(s)$ for some functions $F$ and $G$ satisfying \ref{item:F_homogeneous} and \ref{item:G_positively_homogeneous} respectively, 
\begin{equation}
\label{eq:FG_pohozaev}
\begin{split}
	p\int_\Omega F(\bm u)\; \d x
	& = (p -1)\int_{\bdy \Omega}|\Grad \bm u|_p^p\lb A(x)\nu, \nu\rb^{p/2}\lb x - x^0, \nu\rb \; \d S_x\\
	& + \frac p2\sum_{j = 1}^d\int_\Omega \lb A(x)\Grad u_j,  \Grad u_j\rb^{(p - 2)/2}\lb B(x)\Grad u_j, \Grad u_j\rb \; \d x. 
\end{split}
\end{equation}
\end{coro}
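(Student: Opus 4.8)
The plan is to obtain the Corollary as an immediate specialization of the general Pohozaev identity \eqref{eq:general_pohozaev} in Lemma \ref{lemma:pohozaev_general}. First I would check that the hypotheses of that lemma are met by the choice $H(s) = \frac 1p F(s) + \frac1{p^*}G(s)$: this $H$ lies in $C^1(\bb R^d)$ because $F$ and $G$ do, and $H(0) = 0$ since $F$ and $G$ vanish at the origin (Lemma \ref{lemma:properties_homogeneous_functions}). Moreover $h := \Grad H = \frac 1p\Grad F + \frac1{p^*}\Grad G = f + g$, so a weak solution $\bm u\in \mc W\cap C^1(\overline\Omega;\bb R^d)$ of problem \eqref{eq:main_problem} is precisely a weak solution of \eqref{eq:potential_form_system} for this $h$, and Lemma \ref{lemma:pohozaev_general} applies to it.

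Next I would simplify the two integrands on the left-hand side of \eqref{eq:general_pohozaev}. By linearity, $\int_\Omega H(\bm u)\,\d x = \frac1p\int_\Omega F(\bm u)\,\d x + \frac1{p^*}\int_\Omega G(\bm u)\,\d x$. For the term $\int_\Omega h(\bm u)\cdot\bm u\,\d x$ I would use the Euler-type identity of Lemma \ref{lemma:properties_homogeneous_functions} \ref{item:homogeneous_grad_dot}, which gives $\Grad F(\bm u)\cdot\bm u = pF(\bm u)$ and $\Grad G(\bm u)\cdot\bm u = p^*G(\bm u)$ pointwise; hence $h(\bm u)\cdot\bm u = F(\bm u) + G(\bm u)$. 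Substituting both expressions into \eqref{eq:general_pohozaev}, its left-hand side becomes
\[
	n\int_\Omega F(\bm u)\,\d x + \frac{np}{p^*}\int_\Omega G(\bm u)\,\d x - (n - p)\int_\Omega F(\bm u)\,\d x - (n - p)\int_\Omega G(\bm u)\,\d x .
\]

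Finally I would invoke the defining relation $p^* = \frac{np}{n - p}$, equivalently $\frac{np}{p^*} = n - p$: the two $G$-integrals then cancel exactly, while the two $F$-integrals combine to $p\int_\Omega F(\bm u)\,\d x$. Since the right-hand side of \eqref{eq:general_pohozaev} is unaffected by the substitution, this yields \eqref{eq:FG_pohozaev}. There is no substantive obstacle in this argument; the only point deserving attention is the cancellation of the critical-growth contributions, which is exactly where the criticality of the exponent $p^*$ — through the identity $\frac{np}{p^*} = n - p$ — is used.
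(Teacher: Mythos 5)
Your proposal is correct and follows the same route as the paper: apply Lemma \ref{lemma:pohozaev_general} with $H = \frac1p F + \frac1{p^*}G$, use the Euler identity from Lemma \ref{lemma:properties_homogeneous_functions} \ref{item:homogeneous_grad_dot} to get $h(\bm u)\cdot\bm u = F(\bm u) + G(\bm u)$, and let the relation $\frac{np}{p^*} = n - p$ cancel the $G$-terms. You simply spell out the arithmetic that the paper leaves implicit.
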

\begin{proof}
From the homogeneity assumptions on $F$ and $G$, with $h = \Grad H = \frac 1p\Grad F + \frac 1{p^*}\Grad G$, Lemma \ref{lemma:properties_homogeneous_functions} \ref{item:homogeneous_grad_dot} gives $h(\bm u)\cdot \bm u = F(\bm u) + G(\bm u)$. Using this in equation \eqref{eq:general_pohozaev} yields equation \eqref{eq:FG_pohozaev}. 
\end{proof}
\begin{proof}[Proof of Theorem \ref{theorem:pozohaev_nonexistence}]
Suppose $\bm u\in \mc W\cap C^1(\overline\Omega; \bb R^d)$ is a nontrivial weak solution to problem \eqref{eq:main_problem}. The assumption that $\Omega$ is star-shaped with respect to $x_0$ ensures that $\lb x - x_0, \nu\rb\geq 0$ for all $x\in \bdy \Omega$ and assumption \ref{item:A_positive_definite} gives $\lb A(x)\nu, \nu\rb \geq \tau > 0$ for all $x\in \bdy \Omega$, where $\nu = \nu(x)$ is the outward unit normal vector at $x\in \bdy \Omega$. Thus the boundary integral on the right-hand side of \eqref{eq:FG_pohozaev} is non-negative. Next, we observe that condition \eqref{eq:B(x)_lower_bound} is equivalent to 
\begin{equation*}
	\frac p2 \lb A(x)\xi, \xi\rb^{(p - 2)/2}\lb B(x)\xi, \xi\rb
	\geq \gamma C_0 |x - x_0|^\gamma |\xi|^p
	\qquad \text{ for all }(x, \xi)\in \Omega\times \bb R^n,
\end{equation*}
so from identity \eqref{eq:FG_pohozaev}, from Lemma \ref{lemma:properties_homogeneous_functions} \ref{item:homogeneous_upper_bound}, and from the sharp Hardy-Sobolev inequality \eqref{eq:hardy_sobolev} we have 
\begin{equation*}
\begin{split}
	pM_F\|\bm u\|_{L^p(\Omega;\bb R^d)}^p
	& \geq p\int_\Omega F(\bm u)\;\d x\\
	& \geq \frac p2\sum_{j = 1}^d\int_\Omega \lb A(x)\Grad u_j, \Grad u_j\rb^{(p - 2)/2}\lb B(x)\Grad u_j, \Grad u_j\rb\; \d x\\
	& \geq \gamma C_0\sum_{j = 1}^d\int_\Omega|x - x_0|^\gamma |\Grad u_j|^p\; \d x\\
	& \geq \frac{\gamma C_0}{K_0^p}\|\bm u\|_{L^p(\Omega; \bb R^d)}^p
\end{split}
\end{equation*}
Since $\bm u\not\equiv 0$ we deduce that $M_F\geq \lambda_*:= \frac{\gamma C_0}{pK_0^p}$. 
\end{proof}
\section{Appendix}
\label{s:appendix}
\begin{lemma}
\label{lemma:simple_construction}
Let $\Omega \subset \bb R^n$ be open. If $F\in C^0(\bb R^d)$ is homogeneous of degree $p$ and if $M_F> 0$ then there is $\bm u\in W_0^{1, p}(\Omega; \bb R^d)$ such that $\int_\Omega F(\bm u) \; \d x> 0$. 
\end{lemma}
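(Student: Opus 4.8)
The plan is to reduce to the scalar-valued situation by taking $\bm u$ to be a fixed vector in $\bb S_p^{d-1}$ times a nonnegative scalar bump function. First I would apply Lemma \ref{lemma:properties_homogeneous_functions} \ref{item:homogeneous_upper_bound} with $\alpha = p$: since $\bb S_p^{d - 1}$ is compact and $F$ is continuous, the maximum defining $M_F$ is attained, so there is $s = (s_1, \ldots, s_d)\in \bb S_p^{d-1}$ with $F(s) = M_F > 0$.

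Next, using that $\Omega$ is a nonempty open set, I would fix any nonnegative $\varphi\in C_c^\infty(\Omega)$ with $\varphi\not\equiv 0$ (for instance a rescaled standard mollifier supported in a ball contained in $\Omega$) and set $\bm u = (s_1\varphi, \ldots, s_d\varphi)$. Each coordinate of $\bm u$ is a scalar multiple of $\varphi\in C_c^\infty(\Omega)\subset W_0^{1, p}(\Omega)$, hence $\bm u\in W_0^{1, p}(\Omega; \bb R^d)$.

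Finally I would evaluate the integrand pointwise: for every $x\in\Omega$ we have $\bm u(x) = \varphi(x)s$ with $\varphi(x)\geq 0$, so the degree-$p$ homogeneity of $F$ gives $F(\bm u(x)) = \varphi(x)^pF(s) = M_F\varphi(x)^p$, and therefore $\int_\Omega F(\bm u)\; \d x = M_F\int_\Omega \varphi^p\; \d x > 0$ because $M_F > 0$ and $\varphi$ is a nonzero nonnegative continuous function. There is essentially no obstacle here; the only point requiring care is that the homogeneity identity $F(\varrho s) = \varrho^pF(s)$ is assumed only for $\varrho > 0$, which is precisely why I choose $\varphi$ nonnegative rather than an arbitrary element of $W_0^{1, p}(\Omega)$ — for a sign-changing $\varphi$ one would also need to control $F(-s)$, which the hypotheses do not determine.
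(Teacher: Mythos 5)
Your proof is correct and takes essentially the same route as the paper: the paper's proof also sets $\bm u$ equal to a maximizing direction $\sigma\in\bb S_p^{d-1}$ times a nonnegative smooth bump supported in a ball inside $\Omega$, the only cosmetic difference being that the paper bounds the integral below using a continuity neighborhood where $F>M_F/2$, whereas you compute $\int_\Omega F(\bm u)\,\d x=M_F\int_\Omega\varphi^p\,\d x$ exactly. The one pedantic point is that at the zeros of $\varphi$ the identity requires $F(0)=0$ (item (a) of Lemma \ref{lemma:properties_homogeneous_functions}, immediate from continuity), since homogeneity is assumed only for $\varrho>0$; otherwise nothing is missing.
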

\begin{proof}[Proof of Lemma \ref{lemma:simple_construction}]
By the continuity of $F$ and the assumption $M_F>0$ there is $\sigma\in \bb S_p^{d - 1}$ for which $F(\sigma) = M_F$ and there is $\epsilon>0$ such that $F(s)> M_F/2$ for all $s\in B(\sigma, \epsilon)$. Let $\eta\in C_c^\infty(\bb R)$ satisfy
\begin{equation*}
	\eta(t) = \begin{cases}
	0 & \text{ if } t\leq 0\\
	1 & \text{ if }t\geq 1. 
	\end{cases}
\end{equation*}
For $B(x_0, r)\subset \Omega$, define $\bm u\in C_c^\infty(\Omega; \bb R^d)$ by 
\begin{equation*}
	\bm u(x) 
	= \left( 1- \eta\left(\frac{|x - x_0|^2}{r^2}\right)\right)\sigma. 
\end{equation*}
By homogenity of $F$, for any $x\in B(x_0, r)$ we have
\begin{equation*}
	F(\bm u(x))
	= \left( 1- \eta\left(\frac{|x - x_0|^2}{r^2}\right)\right)^pM_F
	\geq 0. 
\end{equation*}
Moreover, there is $\delta\in(0, r)$ such that $\bm u(x)\in B(\sigma, \epsilon)$ whenever $x\in B(x_0, \delta)$. For any such $\delta$, 
\begin{equation*}
	\int_\Omega F(\bm u)\; \d x
	\geq \int_{B(x_0, \delta)}F(\bm u)\; \d x
	\geq \frac{M_F}2|B(x_0, \delta)|
	> 0. 
\end{equation*}
\end{proof}
\begin{lemma}
\label{lemma:elementary_Lp}
Let $\Omega \subset \bb R^n$ be an open set. For each $p\geq 1$ there is a constant $C = C(p)>0$ such that for all $\epsilon\in (0, 1)$ and all non-negative functions $f, h\in L^p(\Omega)$, 
\begin{equation*}
	\int_\Omega (f + h)^p\; \d x
	\leq (1 + \epsilon)\|f\|_p^p + \frac{C(p)}{\epsilon^p}\|h\|_p^p. 
\end{equation*}
\end{lemma}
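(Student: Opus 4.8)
The plan is to prove the elementary inequality
\[
\int_\Omega (f + h)^p\; \d x \leq (1 + \epsilon)\|f\|_p^p + \frac{C(p)}{\epsilon^p}\|h\|_p^p
\]
by reducing it to a pointwise estimate on $(a + b)^p$ for nonnegative reals $a, b$ and then integrating. First I would fix $\epsilon \in (0,1)$ and seek a constant $C(p) > 0$, depending only on $p$, such that
\[
(a + b)^p \leq (1 + \epsilon) a^p + \frac{C(p)}{\epsilon^p} b^p
\qquad \text{for all } a, b \geq 0.
\]
Once this pointwise bound is in hand, applying it with $a = f(x)$, $b = h(x)$ and integrating over $\Omega$ yields the claim immediately, since $\int_\Omega f^p = \|f\|_p^p$ and $\int_\Omega h^p = \|h\|_p^p$.

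To establish the pointwise inequality, I would use a Young-type / convexity argument. The standard approach: for any $\delta > 0$ one has $(a+b)^p \leq (1+\delta)^{p-1} a^p + (1 + \delta^{-1})^{p-1} b^p$; this follows from writing $a + b = \frac{a}{1 - t} (1-t) + \frac{b}{t} t$ with $t = \delta/(1+\delta) \in (0,1)$ and applying the convexity of $r \mapsto r^p$ (Jensen's inequality with weights $1-t$ and $t$). Then choose $\delta = \delta(\epsilon, p)$ so that $(1 + \delta)^{p-1} \leq 1 + \epsilon$; since $p \geq 1$ and $\epsilon \in (0,1)$, it suffices to take $\delta$ of order $\epsilon$, say $\delta = \epsilon / (2(p-1))$ when $p > 1$ (using $(1+\delta)^{p-1} \leq 1 + (p-1)\delta \cdot 2^{p-2}$ or a similar crude bound valid for small $\delta$, or simply $\delta$ small enough that $(1+\delta)^{p-1} \le 1 + \epsilon$, which is possible by continuity), and $\delta$ arbitrary when $p = 1$ (in which case the inequality is trivial with $C(1)$ anything). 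With this choice, $(1 + \delta^{-1})^{p-1}$ is bounded by $C(p) \epsilon^{-(p-1)} \leq C(p) \epsilon^{-p}$ for $\epsilon \in (0,1)$, where $C(p)$ absorbs all $p$-dependent constants. This gives the desired form.

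The main obstacle—which is minor—is bookkeeping the dependence of constants: one must verify that the $\delta$ chosen to force $(1+\delta)^{p-1} \leq 1+\epsilon$ can be taken comparable to $\epsilon$ (so that $\delta^{-1} \lesssim \epsilon^{-1}$) with a proportionality constant depending only on $p$, and that the resulting coefficient of $b^p$ is genuinely $O(\epsilon^{-p})$ rather than a worse power. Both are handled by the elementary estimate $(1+\delta)^{p-1} \leq 1 + c_p \delta$ for $\delta \in (0,1]$ with $c_p = (p-1)2^{p-2}$ (or any valid bound), so that $\delta := \min\{1, \epsilon/c_p\}$ works; since $\epsilon < 1$ this forces $\delta = \epsilon/c_p$ and $\delta^{-1} = c_p/\epsilon$, whence $(1 + \delta^{-1})^{p-1} \leq (1 + c_p)^{p-1}\epsilon^{-(p-1)} \leq (1+c_p)^{p-1}\epsilon^{-p}$. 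Setting $C(p) = (1 + c_p)^{p-1}$ completes the proof. No heavy machinery is needed; the only care required is to keep all constants depending on $p$ alone.
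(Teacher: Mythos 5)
Your proof is correct, and it takes a somewhat different route from the paper's. You build the $\epsilon$ directly into a single pointwise inequality, $(a+b)^p \leq (1+\delta)^{p-1}a^p + (1+\delta^{-1})^{p-1}b^p$ (weighted convexity of $r\mapsto r^p$), choose $\delta$ comparable to $\epsilon$, and then just integrate; there is no cross term to handle, and in fact your argument yields the slightly stronger coefficient $C(p)\epsilon^{-(p-1)}$ for $\|h\|_p^p$. The paper instead uses the cruder pointwise expansion $(a+b)^p \leq a^p + 2^p a^{p-1}b + 4^p b^p$, integrates, and then absorbs the resulting cross term $\int_\Omega f^{p-1}h\,\d x$ at the integral level via H\"older's and Young's inequalities with a small parameter. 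Both are elementary; yours is arguably cleaner since all the work is pointwise, while the paper's version is the form one often needs when the cross term arises naturally from an expansion (as it does in the partition-of-unity estimates where the lemma is applied). Two small bookkeeping points in your write-up: the bound $(1+\delta)^{p-1}\leq 1+(p-1)2^{p-2}\delta$ is not valid for $1<p<2$ (there concavity gives the better $c_p=p-1$), and with $\delta=\min\{1,\epsilon/c_p\}$ the case $\delta=1$ can occur when $c_p<\epsilon$, in which case $(1+\delta)^{p-1}\leq 1+c_p\leq 1+\epsilon$ still holds, so the argument survives -- but the sentence claiming $\epsilon<1$ forces $\delta=\epsilon/c_p$ is not literally correct. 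You hedged both points and they only affect the explicit value of $C(p)$, so the proof stands.
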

\begin{proof}
For any $t\in [0, 1]$, the convexity of $t\mapsto (1 + t)^p$ ensures that $(1+ t)^p \leq 1 + (2^p-1)t$ for all $t\in [0, 1]$ while for $t\geq 1$ the Mean Value Theorem gives
\begin{equation*}
\begin{split}
	(1 + t)^p - 1^p
	\leq pt(2t)^{p - 1}
	= 2^{p - 1}pt^p. 
\end{split}
\end{equation*}
Combining these two estimates yields
\begin{equation*}
	(1 + t)^p \leq \begin{cases}
	1 + (2^p - 1)t & \text{ if }t\in [0, 1]\\
	1 + 2^{p - 1}pt^p & \text{ if }t\geq 1.
	\end{cases}
\end{equation*}
From this estimate we obtain $(1 + t)^p\leq 1+ 2^p t + 4^p t^p$ for all $t\geq0$ and hence 
\begin{equation}
\label{eq:(a + b)^p}
	(a + b)^p
	\leq a^p + 2^pa^{p - 1}b + 4^p b^p
	\qquad \text{ for all }a, b\geq 0.  
\end{equation} 
Now if $0\leq f, g\in L^p(\Omega)$ and if $\delta\in (0, 1)$ then applying \eqref{eq:(a + b)^p}, H\"older's inequality and Young's inequality gives
\begin{equation*}
\begin{split}
	\int_\Omega (f + h)^p\; \d x
	& \leq \|f\|_p^p + 2^p\int_\Omega f^{p-1} h\; \d x + 4^p\|h\|_p^p\\
	& \leq \|f\|_p^p + 2^p\left(\frac{\delta^{p'}}{p'}\|f\|_p^p + \frac{1}{p\delta^p}\|h\|_p^p\right) + 4^p\|h\|_p^p\\
	& \leq (1 + 2^p \delta^{p'})\|f\|_p^p + \left(4^p + 2^p\delta^{-p}\right)\|h\|_p^p.  
\end{split}
\end{equation*}
Given $\epsilon\in (0, 1)$ we apply the previous inequality with $\delta = 2^{-\frac p{p'}}\epsilon^{\frac 1{p'}}$ to obtain 
\begin{equation*}
\begin{split}
	\int_\Omega (f + h)^p\; \d x
	& \leq (1 + \epsilon)\|f\|_p^p + \left(4^p + 2^p\left(2^{\frac p{p'}}\epsilon^{-\frac 1{p'}}\right)^p\right)\|h\|_p^p\\
	& \leq (1+ \epsilon)\|f\|_p^p + 4^{p^2+ 1}\epsilon^{-p}\|h\|_p^p. 
\end{split}
\end{equation*}
\end{proof}
\begin{proof}[Proof of Lemma \ref{lemma:good_minimizing_sequence}]
Define $X = \{\bm u\in \mc W_A:\Psi(\bm u)\geq 1/4\}$. The continuity of $\Psi$ on $\mc W_A$ ensures that  $(X, \|\cdot\|_{\mc W_A})$ is a complete metric space. Let $(\bm v^k)_{k = 1}^\infty\subset \mc M$ be a minimizing sequence for $K^{-1}$ for which 
\begin{equation*}
	K^{-1} + \frac 1{k^2} 
	\geq \Phi(\bm v^k)
	= Q(\bm v^k). 
\end{equation*}
For each $k$ we apply the Ekeland Variational Principle to obtain $\bm w^k\in X$ that strictly minimizes the functional 
\begin{equation*}
	Q_k(\bm \varphi) 
	= Q(\bm \varphi) + \frac 1 k\|\bm w^k - \bm \varphi\|_{\mc W_A}
\end{equation*}
over $\bm \varphi\in X$ and for which both $Q(\bm w^k)\leq Q(\bm v^k)$ and 
\begin{equation}
\label{eq:ekelands_sequence_close}
	\|\bm v^k - \bm w^k\|_{\mc W_A}\leq \frac 1 k. 
\end{equation}
By continuity of $\Psi:\mc W_A\to \bb R$, the assumption $\Psi(\bm v^k) = 1$ for all $k$, and \eqref{eq:ekelands_sequence_close} we obtain 
\begin{equation}
\label{eq:Psi(wk)_bounds}
	\frac 78 \leq \Psi(\bm w^k) \leq \frac98
\end{equation}
whenever $k$ is sufficiently large. Using the continuity of $\Psi$ once more, together with the fact that $\bm w^k$ strictly minimizes $Q_k$, we find that there is $\delta>0$ such that 
\begin{equation*}
\begin{split}
	Q(\bm w^k)
	& = Q_k(\bm w^k)\\
	& \leq Q_k(\bm w^k + \bm \eta)\\
	& = Q(\bm w^k + \bm \eta) + k^{-1}\|\bm \eta\|_{\mc W_A}
\end{split}
\end{equation*}
whenever $\|\bm \eta\|_{\mc W_A}< \delta$. Therefore, 
\begin{equation*}
	\|Q'(\bm w_k)\|_{\mc W_A'}
	= \limsup_{0\neq \|\bm \eta\|_{\mc W_A}\to 0}\frac{Q(\bm w^k) - Q(\bm w^k + \bm \eta)}{\|\bm \eta\|_{\mc W_A}}
	\leq k^{-1}
	= \circ(1). 
\end{equation*}
Finally, for $c_k = \Psi(\bm w^k)^{-1/p^*}$ by \eqref{eq:Psi(wk)_bounds} there is a constant $C = C(n, p)\geq 1$ such that for $k$ sufficiently large,
\begin{equation}
\label{eq:ck_bounds}
	C^{-1}\leq c_k\leq C.
\end{equation}
Defining $\bm u^k = c_k\bm w^k$, the $p^*$-homogeneity of $\Psi$ gives $\Psi(\bm u^k) = 1$ for all $k$. Since $Q$ is homogeneous of degree $0$ we get 
\begin{equation*}
	Q(\bm u^k)
	= Q(\bm w^k)
	\leq Q(\bm v^k)
	\leq K^{-1} + \frac 1{k^2}. 
\end{equation*}
Moreover since $Q'(c\bm u) = c^{-1}Q'(\bm u)$ for $c>0$ and from \eqref{eq:ck_bounds} we have 
\begin{equation*}
	\|Q'(\bm u^k)\|_{\mc W_A'}
	= c_k^{-1}\|Q'(\bm w^k)\|_{\mc W_A'}
	\leq C\|Q'(\bm w^k)\|_{\mc W_A'}
	= \circ(1). 
\end{equation*}
Thus $(\bm u^k)_{k= 1}^\infty\subset \mc M$ is a minimizing sequence for $K^{-1}$ for which $Q'(\bm u^k)\to 0$ in $\mc W_A'$. We proceed to show that the condition $Q'(\bm u^k)\to 0$ in $\mc W_A'$ implies condition \eqref{eq:some_derivatives_vanishing}. For $(\bm u^k)_{k = 1}^\infty\subset \mc M$ constructed as above we have
\begin{equation*}
\begin{split}
	\circ(1)
	& = Q'(\bm u^k)\\
	& = \Phi'(\bm u^k) - \frac p{p^*}\Phi(\bm u^k)\Psi'(\bm u^k)\\
	& = \Phi'(\bm u^k) - \frac p{p^*}K^{-1}\Psi'(\bm u^k) + \circ(1)\Psi'(\bm u^k). 
\end{split}
\end{equation*}
Thus, to complete the proof of the lemma we only need to show that $\sup_k\|\Psi'(\bm u^k)\|_{\mc W_A'}< \infty$. We proceed to verify this now. Since $G\in C^1(\bb R^d)$, item \ref{item:derivatives_homogeneous} of Lemma \ref{lemma:properties_homogeneous_functions} guarantees that $|\Grad G|_2$ is $(p^*- 1)$-homogeneous, so item \ref{item:homogeneous_upper_bound} of Lemma \ref{lemma:properties_homogeneous_functions} gives $|\Grad G(s)|_2\leq \mu |s|^{p^* - 1}_{p^* - 1}$ for all $s\in \bb R^d$, where $\mu$ is defined by $\mu:= \max\{|\Grad G(s)|_2: s\in \bb S_{p^* - 1}^{d - 1}\}$. Therefore, for any $\bm u, \bm \varphi\in \mc W_A$, 
\begin{equation*}
\begin{split}
	\abs{\lb \Psi'(\bm u), \bm \varphi\rb}
	& \leq \int_\Omega |\Grad G(\bm u)|_2|\bm \varphi|_2\; \d x\\
	& \leq \mu\int_\Omega |\bm u|_{p^* - 1}^{p^* - 1}|\bm \varphi|_2\; \d x\\
	& \leq \mu\left(\int_\Omega |\bm u|_{p^* - 1}^{p^*}\; \d x\right)^{(p^* - 1)/p^*}\left(\int_\Omega |\bm \varphi|_2^{p^*}\; \d x\right)^{1/p^*}\\
	& \leq C\mu\sum_{j = 1}^d\|u_j\|_{L^{p^*}}^{p^* - 1}\|\bm \varphi\|_{\mc W_A}. 
\end{split}
\end{equation*}
The coercivity of $\Phi$ ensures that $\bm u^k$ is bounded in $L^{p^*}(\Omega; \bb R^d)$ so applying the previous estimate with $\bm u = \bm u^k$ and arbitrary $\bm \varphi\in \mc W_A$ yields
\begin{equation*}
	\sup_k \|\Phi'(\bm u^k)\|_{\mc W_A'} 
	\leq C\sup_k\|\bm u^k\|_{L^{p^*}(\Omega; \bb R^d)}^{p^* - 1}
	< \infty 
\end{equation*}
thereby completing the proof. 
\end{proof}
%
\subsection{Regularity}
\label{ss:regularity}
\begin{proof}[Proof of Proposition \ref{prop:regularity}]
Let $\bm u\in \mc W$ be a weak solution to problem \eqref{eq:main_problem}. First we show that $\bm u\in L^t(\Omega; \bb R^d)$ for any $t\in (p^*, \infty)$ by adapting the argument of Proposition 1.2 of \cite{GueddaVeron1989} to the vector-valued case. For $k\in \bb N$ large and for $q\in (p, \infty)$, define $\zeta: \bb R\to \bb R$ by
\begin{equation*}
	\zeta(r)
	= \begin{cases}
	\sign r|r|^{q/p} & \text{ if }|r|\leq k\\
	\sign r\left(k^{q/p} + \frac qp k^{\frac qp - 1}(|r| - k)\right) & \text{ if }|r|> k. 
	\end{cases}
\end{equation*}
The function $\eta\in C^1(\bb R)$ defined by $\eta(r) = \int_0^r(\zeta'(s))^p\; \d s$ has bounded derivative and satisfies $\eta(0) = 0$, so $\eta(\bm u):= (\eta(u_1), \ldots, \eta(u_d))\in \mc W$ is a valid test function for the weak formulation of problem \eqref{eq:main_problem}. Setting $h = f + g\in C^0(\bb R^d; \bb R^d)$, testing problem \eqref{eq:main_problem} against $\eta(\bm u)$, then using the Sobolev inequality and performing standard computations gives
\begin{equation}
\label{eq:regularity_initial_test}
\begin{split}
	\sum_{j =1 }^d \int_\Omega h_j(\bm u)\eta(u_j)\; \d x
	& \geq \frac{\tau^{p/2}}{\mc S}\sum_{j = 1}^d\|\zeta(|u_j|)\|_{p^*}^p\\
	& \geq \frac{\tau^{p/2}}{2^{dp}\mc S}\left(\sum_{j = 1}^d\|\zeta(|u_j|)\|_{p^*}\right)^p\\
	& \geq \frac{\tau^{p/2}}{2^{dp}\mc S}\big\|\sum_{j = 1}^d\zeta(|u_j|)\big\|_{p^*}^p\\
	& \geq \frac{\tau^{p/2}}{2^{dp}C_0^p\mc S}\|\zeta(|\bm u|)\|_{p^*}^p, 
\end{split}
\end{equation}
where $\mc S = \mc S(n, p)$ is the sharp Sobolev constant defined in equation \eqref{eq:classical_sobolev_constant} and $C_0 = C_0(d,p, q)$ is any constant for which the inequality $\zeta(|z|)\leq C_0\sum_{j = 1}^d\zeta(|z_j|)$ holds for all $z\in \bb R^d$. From the continuity and the homogeneity of $f$ and $g$ we have $C>0$ such that $|h(z)|\leq C(1 + |z|^{p^* - p})|z|^{p - 1}$ for all $z\in \bb R^d$. Since, in addition, the inequality 
\begin{equation*}
	|r|^{p - 1}|\eta(r)| \leq 2\left(\frac qp\right)^p\zeta(|r|)^p
\end{equation*}
holds for all $r\in \bb R$, for any $m\in \bb N$, the $j^{\text{th}}$ summand on the left-hand side of \eqref{eq:regularity_initial_test} is bounded above as follows: 
\begin{equation}
\label{eq:regularity_bound_jth_summand_above}
\begin{split}
	\int_\Omega & h_j(\bm u)\eta(u_j)\; \d x\\
	& \leq C\int_\Omega (1 + |\bm u|^{p^* - p})|\bm u|^{p - 1}\eta(|\bm u|)\; \d x\\
	& \leq C\int_\Omega (1 + |\bm u|^{p^* - p})\zeta(|\bm u|)^p\; \d x\\
	& \leq C\left(\|1+ |\bm u|^{p^* - p}\|_{L^{n/p}(\Omega_m)}\|\zeta(|\bm u|)\|_{p^*}^p + (1 + m^{p^* - p})\|\zeta(|\bm u|)\|_p^p\right), 
\end{split}
\end{equation}
where $C$ depends on $d$, $p$, $q$ and the maximum values of $|f|$ and $|g|$ over $\bb S^{d - 1}$ and where $\Omega_m = \{x\in \Omega: |\bm u|\geq m\}$. Combining \eqref{eq:regularity_initial_test} and \eqref{eq:regularity_bound_jth_summand_above} then choosing $m$ sufficiently large and depending on $n$, $d$, $p$, $q$, $\tau$, the maximum values of $|f|$ and $|g|$ over $\bb S^{d - 1}$, and the distribution function of $|\bm u|$ gives
\begin{equation}
\label{eq:integrability_boosting_estimate}
	\|\zeta(|\bm u|)\|_{p^*}^p \leq C\|\zeta(|\bm u)\|_p^p. 
\end{equation}
If $q$ is any exponent for which $|\bm u|\in L^q(\Omega)$ then letting $k\to \infty$ in \eqref{eq:integrability_boosting_estimate} gives
\begin{equation}
\label{eq:iteration_estimate}
	\|\bm u\|_{qp^*/p}^q\leq C\|\bm u\|_q^q. 
\end{equation}
Given $t\in (p^*, \infty)$, one may iterate estimate \eqref{eq:iteration_estimate} finitely many times starting with $q = p^*$ to obtain $\bm u\in L^t(\Omega; \bb R^d)$. \\

With the improved integrability of $\bm u$ in hand, for each $j \in \{1, \ldots, d\}$ we have $h_j\circ \bm u\in L^t(\Omega)$ for some $t> n/p$ so since $u_j\in W_0^{1, p}(\Omega)$ weakly solves $-L_{A, p}u_j = h_j\circ \bm u$ in $\Omega$, the standard argument based on Moser's iteration gives $u_j\in L^\infty(\Omega)$. The boundedness of each $u_j$ gives $h_j\circ \bm u\in L^\infty(\Omega)$, so from the assumption that $\bdy\Omega\in C^{1, \alpha}$ and the assumption that the entries of $A$ are in $C^\alpha(\overline\Omega)$, we may apply Theorem 1 of \cite{Lieberman1988} to obtain $u_j\in C^{1, \alpha}(\overline\Omega)$ for each $j \in \{1, \ldots, d\}$.
\end{proof}
%
\newcommand{\etalchar}[1]{$^{#1}$}

\end{document}